\documentclass[reqno,10pt]{amsart}
\usepackage{amsmath,amsthm,amsfonts,amssymb}
\usepackage[left=3cm,right=3cm,top=3.5cm,bottom=3.5cm,a4paper]{geometry}

\usepackage{hyperref}
\usepackage[all]{xy}
\usepackage{graphicx}
\usepackage{amscd}
\usepackage{color}
\usepackage{enumerate}
\usepackage{fourier}
\usepackage{tikz}
\usepackage{cases}

\hypersetup{
    colorlinks=true,
    linkcolor=blue,
    citecolor=green,
    urlcolor=cyan
}

\numberwithin{equation}{section}

\theoremstyle{plain} 
\newtheorem{theorem}{Theorem}[section]
\newtheorem{corollary}[theorem]{Corollary}
\newtheorem{lemma}[theorem]{Lemma}

\theoremstyle{definition}
\newtheorem{definition}[theorem]{Definition}

\newtheorem{conjecture}[theorem]{Conjecture}

\newtheorem{notation}[theorem]{Notation}
\newtheorem{assumption}[theorem]{Assumption}

\newtheorem{thmx}{Theorem}

\theoremstyle{remark}
\newtheorem{remark}[theorem]{Remark}

\newcommand{\PP}{\mathbb{P}}

\newcommand{\Z}{\mathbb{Z}}

\newcommand{\C}{\mathbb{C}}

\newcommand{\CA}{\mathcal{A}}

\newcommand{\CF}{\mathcal{F}}

\newcommand{\AI}{{A_\infty}}

\newcommand{\Jac}{{\rm Jac}}

\newcommand{\HG}{{\widehat{G}}}
\newcommand{\id}{\mathrm{id}}
\newcommand{\bL}{{\mathbb{L}}}

\newcommand{\barW}{{\overline{W}}}
\newcommand{\tildeW}{{\widetilde{W}}}
\newcommand{\Dup}{{D^{\uparrow}}}
\newcommand{\Ddown}{{D^{\downarrow}}}
\newcommand{\MC}{{\mathcal{MC}}}

\newcommand{\Hom}{{\rm Hom}}

\newcommand{\nablaxy}[1]{{\nabla_{#1}^{x \to (x,y)}}}
\newcommand{\nablayz}[1]{{\nabla_{#1}^{y \to (y,z)}}}

%


\allowdisplaybreaks

\begin{document}
\title[Twisted Jacobians as endomorphisms of matrix factorizations]{Twisted Jacobian algebras as endomorphism algebras of \\ equivariant matrix factorizations}

\author{Sangwook Lee}
\address{Sangwook Lee: Department of Mathematics and Integrative Institute of Basic Science \\Soongsil University \\
369 Sangdo-ro, Dongjak-gu, Seoul, Korea}
\email{sangwook@ssu.ac.kr}
\begin{abstract}
For a polynomial $W$ with an isolated singularity, we can consider the Jacobian ring as an invariant of the singularity. If in addition we have a group action on the polynomial ring with $W$ fixed, we are led to consider the twisted Jacobian ring which reflects the equivariant structure as well. Our main result is to show that the twisted Jacobian ring is isomorphic to an endomorphism ring of the "twisted diagonal" matrix factorization. As an application, we suggest a way to investigate Floer theory of Lagrangian submanifolds which represent homological mirror functors.
\end{abstract}
\maketitle

\section{Introduction}

Given a nontrivial automorphism group $G$ of the space-time, it is natural to consider $G$-equivariant 2-dimensional topological field theory (2d TFT for short). Turaev formulated closed $G$-equivariant 2d TFT as a $G$-graded algebra $\mathcal{C}=\bigoplus_{g\in G}\mathcal{C}_g$ (which was later called {\em Turaev algebra}) satisfying a number of axioms which include $G$-twisted commutativity, $G$-invariance of the pairing, trace axiom etc. If we take the invariant of each graded piece $\mathcal{C}_g$ by centralizer $Z_g$, then it gives rise to the orbifold theory (see \cite{Dbrane,Tu}). 

There is a well-known geometric example of Turaev algebras. When $X$ is a compact almost complex manifold and $G$ is a finite group acting on $X$, Fantechi-G\"ottsche \cite{FG} defined an algebra $\mathcal{H}(X,G)$, so-called the {\em stringy cohomology} of $(X,G)$. It was shown that $\mathcal{H}(X,G)$ satisfies axioms of Turaev algebras. It is also known that the $G$-invariant subalgebra $\mathcal{H}(X,G)^G$ is isomorphic to the orbifold cohomology $H_{orb}^*([X/G])$ (see also \cite{Ki}).

We recall another important example of TFTs from singularity theory which is the main topic of this paper. Let $X$ be a variety and $W$ be an algebraic function on $X$. When we want to investigate the singularity of $W$, we are led to consider an important invariant, so-called Jacobian ring (denoted by $\Jac(W)$) which is the function ring of the singular locus of $W$. Together with residue pairing, Jacobian ring is a Frobenius algebra which encodes a (non-equivariant) closed 2d TFT.
If a group $G$ acts on the variety $X$ and the function $W$ is invariant under the action, then it is natural to consider $G$-equivariant invariants of the singularity. If $G$ is finite and abelian, such a pair $(W,G)$ is called a {\em Landau-Ginzburg orbifold} which is an interesting object in mirror symmetry. For simplicity we only consider $(W,G)$ where $W$ is a polynomial in $R=k[x_1,\cdots,x_n]$ ($k$ is a field of characteristic $0$) which has a singularity at the origin, and $G$ is a finite abelian group which acts diagonally on $R$.
It is natural to ask how we can construct an algebra structure which corresponds to the closed $G$-equivariant 2d TFT for $(W,G)$. For convenience, let us introduce its name as {\em twisted Jacobian algebra} of $(W,G)$ and denote it by $\Jac'(W,G)$. There is a natural candidate for the underlying module structure of $\Jac'(W,G)$ as
\begin{equation}\label{eq:underlyingmodule}
 \Jac'(W,G)=\bigoplus_{g\in G} \Jac(W^g)\cdot \xi_g
 \end{equation}
 where $W^g:=W|_{{\rm Fix}(g)}$ and $\xi_g$ is a formal generator with $|\xi_g|=n-\dim({{\rm Fix}(g)})$.

We recall earlier works on the construction of twisted Jacobian algebras.
\begin{itemize}
\item Kaufmann \cite{Kau} classified possible algebraic structures on \eqref{eq:underlyingmodule} in terms of discrete torsions of $G$. He also observed that one needs to allow nontrivial characters which modify Turaev's axioms to accommodate more general classes of singularities.
\item Basalaev-Takahashi-Werner \cite{BTW} investigated $\Jac'(W,G)$ in the context of mirror symmetry of Frobenius manifolds which essentially involve primitive forms. They axiomatized twisted Jacobian algebras employing primitive forms, and obtained existence and uniqueness result for twisted Jacobian algebras of invertible polynomials together with diagonal symmetry group actions. 
\item Shklyarov \cite{Shk} gave an explicit multiplication formula of $\Jac'(W,G)$ for arbitrary Landau-Ginzburg orbifolds. He deduced the formula from the cup product on Hochschild cohomology of a curved algebra with crossed product coefficient. 
\item He-Li-Li \cite{HLL} deduced a combinorial multiplication formula together with verification of Kaufmann's axioms (concerning not only multiplications, but also pairings and trace axiom etc) for invertible polynomials.
\end{itemize}

In this paper, we will focus on the product structure on $\Jac'(W,G)$.

Recall that taking invariant of a closed $G$-equivariant 2d TFT by centralizers gives rise to the orbifold theory. It is also the case for twisted Jacobian algebras. Namely, if we take $G$-invariant of $\Jac'(W,G)$, it becomes a commutative Frobenius algebra $\Jac(W,G)$ which is called the {\em orbifold Jacobian algebra} of $(W,G)$. Orbifold Jacobian algebras naturally arise from categories as follows.
\begin{theorem}[\cite{PP,Shk}]
We have an algebra isomorphism
\begin{equation}\label{thm:jachh} 
\Jac(W,G) \cong HH^*(MF_G(W))
\end{equation}
where $HH^*(MF_G(W))$ is the Hochschild cohomology of the dg category of $G$-equivariant matrix factorizations of $W$.
\end{theorem}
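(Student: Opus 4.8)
The plan is to follow the strategy of Shklyarov \cite{Shk} (and the parallel computation in \cite{PP}), reducing the Hochschild cohomology of $MF_G(W)$ to a computation over a smooth crossed-product algebra and then localizing each twisted sector to a fixed locus. First I would use the standard identification of $MF_G(W)$, up to derived Morita equivalence, with the dg category of curved modules over the crossed-product curved algebra $(R\rtimes G, W)$; since Hochschild cohomology (of the second kind) is a Morita invariant, $HH^*(MF_G(W))$ is computed by the Hochschild complex of $(R\rtimes G, W)$. Exploiting the $G$-grading of the crossed product, together with the fact that $G$ is finite abelian (so conjugacy classes are points and every centralizer is all of $G$), this complex splits as a direct sum over $g\in G$ of "$g$-twisted sectors" carrying a residual $G$-action, giving
\begin{equation*}
HH^*(MF_G(W)) \cong \Bigl( \bigoplus_{g\in G} HH^*_{(g)}(R,W) \Bigr)^G ,
\end{equation*}
where $HH^*_{(g)}(R,W)$ denotes the cohomology of the $g$-twisted Hochschild complex of the curved algebra $(R,W)$.

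Second, I would compute each twisted sector. The $g$-twisted Hochschild complex is resolved by a Koszul-type complex built on $R\otimes R$ from the pair $\bigl(W(x)-W(gx),\, x-gx\bigr)$; because $1-g$ acts invertibly on the directions normal to $\mathrm{Fix}(g)$, restriction to the diagonal makes the complex localize to $\mathrm{Fix}(g)$, and its cohomology is $\Jac(W^g)$, concentrated in the single homological degree $n-\dim(\mathrm{Fix}(g))$ which matches $|\xi_g|$ as in \eqref{eq:underlyingmodule}. This is the $g$-equivariant refinement of the untwisted statement $HH^*(MF(W))\cong\Jac(W)$, the new input being exactly the transversality of $1-g$ along $\mathrm{Fix}(g)$ that eliminates the normal directions. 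Taking $G$-invariants as above then identifies the underlying graded module of $HH^*(MF_G(W))$ with $\bigl(\bigoplus_{g\in G}\Jac(W^g)\,\xi_g\bigr)^G$, which is by definition the orbifold Jacobian algebra $\Jac(W,G)$.

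Third, and this is where the real work lies, I would match the ring structures. The cup product on $HH^*(MF_G(W))$ is induced by composition in the crossed-product Hochschild complex; evaluated on representatives coming from the Koszul resolutions attached to two group elements $g$ and $h$, the product is computed through a homotopy comparing the Koszul complex for $gh$ with the tensor product of those for $g$ and $h$, and the resulting correction term is precisely the normal-Hessian determinant factor that appears in the known multiplication formulas for $\Jac'(W,G)$. I expect the main obstacle to be the careful bookkeeping of signs and of these determinant (equivalently, Clifford-contraction) factors, and checking that the restriction maps to $\mathrm{Fix}(gh)$ interact with the products over $\mathrm{Fix}(g)$ and $\mathrm{Fix}(h)$ exactly as the definition of $\Jac(W,G)$ via the formula of \cite{Shk} (or \cite{HLL}) prescribes. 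Once this is verified sector by sector, passing to $G$-invariants is formal and yields the algebra isomorphism \eqref{thm:jachh}.
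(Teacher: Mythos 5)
The paper offers no proof of this statement---it is quoted directly from \cite{PP,Shk}---and your outline is a faithful reconstruction of the argument in those references: pass to the crossed-product curved algebra $(R\rtimes G,W)$, decompose the Hochschild complex into $g$-twisted sectors, localize each sector to $\mathrm{Fix}(g)$ to get $\Jac(W^g)$ concentrated in degree $d_g$, take $G$-invariants, and then verify that the cup product reproduces Shklyarov's structure constants. The one step you should not treat as formal is the appeal to ``Morita invariance'' of Hochschild cohomology of the second kind: such invariance fails for general curved algebras, and establishing the comparison between the Hochschild theory of $(R\rtimes G,W)$ and the categorical $HH^*(MF_G(W))$ is precisely the delicate content of \cite{PP} (it holds here because $W$ has an isolated critical point), so that point needs the citation rather than a one-line justification.
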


Computation of Hochschild cohomology of a dg category is in general not easy. For a dg category of matrix factorizations, however, we can realize the Hochschild cochain complex very explicitly. The original definition (due to To\"en \cite{To}) of Hochschild cochain complex of a dg category $\CA$ is
\[ C^*(\CA,\CA):= hom_{R\underline{Hom}_c(\CA,\CA)}(\mathrm{Id}_\CA,\mathrm{Id}_\CA),\]
where $R\underline{Hom}_c(\CA,\CA)$ is the dg category of continuous endofunctors of $\CA$. 

The notion of {\em kernels} for dg functors between categories of matrix factorizations enables us to compute Hochschild cohomology. The theory of kernels for functors between (possibly non-affine) matrix factorization categories was extensively studied in \cite{BFK}. For affine case, we have the following results.
\begin{itemize}
\item If $\CA=MF(W)$, then $R\underline{Hom}_c(\CA,\CA) \simeq MF(W\boxminus W)$, and the identity functor $\mathrm{Id}_\CA$ corresponds to a Koszul matrix factorization $\Delta_W$ of the diagonal ideal. 
\item If $\CA=MF_G(W)$, then $R\underline{Hom}_c(\CA,\CA) \simeq MF_{G\times G}(W\boxminus W)$ and the identity functor $\mathrm{Id}_\CA$ corresponds to a $(G\times G)$-equivariant Koszul matrix factorization $\Delta_W^{G\times G}$ of the {\em twisted} diagonal ideal.
\end{itemize}
Therefore we can reformulate \eqref{thm:jachh} as follows.
\begin{equation}\label{eq:jacendo} 
\Jac(W,G) \cong \Hom_{MF_{G\times G}(W\boxminus W)}(\Delta_W^{G\times G},\Delta_W^{G\times G}).
\end{equation}

The isomorphism \eqref{eq:jacendo} gives a clue to interpret $\Jac'(W,G)$ in terms of matrix factorizations. 
Our approach towards multiplication formula of $\Jac'(W,G)$ is via category $MF_{1\times G}(W\boxminus W)$. Such a choice of the category is natural in the sense that $\Jac'(W,G)$ can be thought of a $G$-lifting of $\Jac(W,G)$. The following is our main theorem.

\begin{thmx}\label{maintheorem}
For a Landau-Ginzburg orbifold $(W,G)$, we have an algebra isomorphism
\begin{equation}\label{eq:mainthm} 
\Jac'(W,G) \cong \Hom_{MF_{1\times G}(W\boxminus W)} (\Delta_W^{G\times G},\Delta_W^{G\times G}).
\end{equation}
\end{thmx}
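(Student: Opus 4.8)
The plan is to compute the right-hand side of \eqref{eq:mainthm} through the internal $\Hom$ complex of $MF(W\boxminus W)$, exploiting crucially that the subgroup $1\times G$ acts \emph{freely} on the components of the twisted diagonal. First I would fix an explicit model for $\Delta_W^{G\times G}$: after diagonalizing the $G$-action, its underlying object (forgetting equivariance) is $\bigoplus_{g\in G}\Delta^g$, where $\Delta^g$ is the Koszul matrix factorization over $R\otimes R=k[x,y]$ supported on the graph $\Gamma_g=\{y=g\cdot x\}$, built from a factorization $W(x)-W(y)=\sum_i(x_i-g_i^{-1}y_i)\,t^g_i$ (which exists because $W$ is $G$-invariant and the $x_i-g_i^{-1}y_i$ form a regular sequence). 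The $(G\times G)$-equivariant structure matches $\Delta^g$ with $\Delta^{g h_2 h_1^{-1}}$ along the element $(h_1,h_2)$ (using that $G$ is abelian), so the subgroup $1\times G$ permutes $\{\Delta^g\}_{g\in G}$ simply transitively and $\Delta_W^{G\times G}$, as a $(1\times G)$-equivariant object, is induced from the non-equivariant object $\Delta^e$.

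Since $MF_{1\times G}(W\boxminus W)$ has the same objects as $MF_{G\times G}(W\boxminus W)$ but its morphism complexes are the $(1\times G)$-invariants of those of $MF(W\boxminus W)$, and since $\mathrm{char}\,k=0$ lets invariants commute with cohomology,
\[ \Hom_{MF_{1\times G}(W\boxminus W)}(\Delta_W^{G\times G},\Delta_W^{G\times G}) \;=\; \Big(\bigoplus_{g,g'\in G} H^*\underline{\Hom}(\Delta^g,\Delta^{g'})\Big)^{1\times G}. \]
By the previous paragraph $1\times G$ acts on the index set $\{(g,g')\}$ freely, with $(e,g^{-1}g')$ the unique representative of the orbit of $(g,g')$; hence the invariants are canonically $\bigoplus_{h\in G}H^*\underline{\Hom}(\Delta^e,\Delta^h)$, each summand still carrying the residual action of the diagonal subgroup $G\subset G\times G$. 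The remaining input at the level of modules is the ``twisted sector'' identification $H^*\underline{\Hom}(\Delta^e,\Delta^h)\cong\Jac(W^h)\cdot\xi_h$ with $|\xi_h|=n-\dim\mathrm{Fix}(h)$: computing $\underline{\Hom}(\Delta^e,\Delta^h)$ as a Koszul complex produces a $2$-periodic complex over $k[x]$ in which the coordinate directions fixed by $h$ survive as $k[\mathrm{Fix}(h)]$, the directions moved by $h$ contract onto the top generator $\xi_h$ of the exterior algebra on the normal bundle, and the residual differential is Koszul on the partials $\partial_iW^h$, with cohomology $\Jac(W^h)$. This is the local computation underlying the decomposition of $HH^*(MF_G(W))$ in \eqref{thm:jachh}; combined with the above it yields the module isomorphism \eqref{eq:underlyingmodule}, and keeping track of the diagonal-$G$ action on $\xi_h$ recovers \eqref{eq:jacendo} after a further round of $G$-invariants.

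It remains to match the Yoneda product. Tracing through the identifications above, the product of $\alpha\in\Hom(\Delta^e,\Delta^h)$ and $\beta\in\Hom(\Delta^e,\Delta^{h'})$ is
\[ \alpha\cdot\beta \;=\; \big(\rho(h')\,\alpha\big)\circ\beta\ \in\ \Hom(\Delta^e,\Delta^{hh'}), \]
where $\rho(h')\colon\Hom(\Delta^e,\Delta^h)\xrightarrow{\ \sim\ }\Hom(\Delta^{h'},\Delta^{hh'})$ is the $(1\times G)$-equivariant structure isomorphism — that is, one first transports $\alpha$ into ``standard position'' $\Delta^{h'}\to\Delta^{hh'}$ and then composes with $\beta$. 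I would compute this composition of (shifted) Koszul matrix factorizations in the local model of the previous paragraph: diagonalize $G$, split the triple product $k[x,y,z]$ along eigen-coordinates, reduce coordinate-by-coordinate to the rank-one situations governed by the eigenvalues of $h$, $h'$ and $hh'$, and reassemble. The outcome is to be compared with Shklyarov's explicit multiplication formula for $\Jac'(W,G)$ \cite{Shk} (equivalently the combinatorial formula of \cite{HLL}).

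The substantive difficulty is concentrated entirely in this last step. Composing the Koszul factorizations on $\Gamma_h$ and $\Gamma_{h'}$ does not merely cut out $\Gamma_{hh'}$: along the coordinate directions that are fixed by $hh'$ but simultaneously moved by $h$ and by $h'$, the composition contributes a determinant-of-second-derivatives factor, and recognizing this geometric correction as the Hessian term in the formulas of \cite{Shk,HLL} is the technical core. Pinning down the accompanying signs and the orientation conventions on the various normal bundles — which determine the generators $\xi_g$ only up to a scalar — together with the character twists that Kaufmann \cite{Kau} showed must be built into the structure, is what makes the bookkeeping delicate; the reduction and the twisted-sector computation are, by comparison, routine. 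One could try to bypass the direct computation by averaging over the residual diagonal $G$ and invoking a uniqueness theorem for twisted Jacobian algebras, but such uniqueness is only available for invertible $W$ (cf.\ \cite{BTW}), so for a general Landau-Ginzburg orbifold the explicit matching seems unavoidable.
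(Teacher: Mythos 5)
Your reduction steps are correct and coincide with the paper's: the observation that $1\times G$ permutes the summands $\Delta_g$ simply transitively (so that a $(1\times G)$-equivariant endomorphism is determined by its restriction to $\Delta_1$, giving $\bigoplus_{h}\Hom(\Delta_1,\Delta_h)$), and the spectral-sequence/Koszul computation identifying each $\Hom(\Delta_1,\Delta_h)$ with $\Jac(W^h)\cdot\xi_h$ as a module, are exactly Lemma \ref{lem:1gequivendo}, its corollary, and Lemma \ref{lemma:homjac}. Your description of the product as $(\rho(h')\alpha)\circ\beta$ also matches the cup product \eqref{eq:cupprod}. But everything you correctly identify as ``the technical core'' is then deferred rather than proved, and that deferred part \emph{is} the theorem. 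Two concrete gaps: first, to compute the Yoneda product on cohomology you need explicit closed representatives in the hom double complex, i.e.\ you must extend the leading term $\theta_{I_h}$ to a genuine cocycle; the paper's entire Section \ref{sec:classification} is devoted to constructing these as $\exp(\eta_h)(\theta_{I_h})$, where $\eta_h$ is built from the difference quotients $g^h_{ji}, f^h_{ji}$, and it is precisely the cross terms of these ``tails'' under composition that produce the Hessian-type corrections in Shklyarov's formula. Your proposal has no mechanism for producing such representatives, and the route you sketch --- splitting $k[x,y,z]$ along eigencoordinates and reducing ``coordinate-by-coordinate to rank-one situations'' --- does not work for a general $G$-invariant $W$, because the Koszul differentials involve $\nabla_iW$, which couple all the variables; only for very special (e.g.\ Sebastiani--Thom split) $W$ would the composition factor over coordinates.

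Second, even granting representatives, one needs the statement (Lemma \ref{lem:leadingterm} in the paper) that a closed morphism $\Delta_1\to\Delta_h$ whose $\theta_{I_h}$-coefficient vanishes is nullhomotopic, so that the product class is determined by the single coefficient of $\theta_{I_{gh}}$ in the composition; without this, matching one coefficient against Shklyarov's $\Upsilon$-expression \eqref{eq:twjacprod} would not suffice. Finally, note that the isomorphism is not the naive one: the paper's map sends $\xi_h\mapsto[\exp(\eta_{h^{-1}})(\theta_{I_{h^{-1}}})]$ and intertwines the products only up to the twist $\Phi(\xi_{h_1}\bullet\xi_{h_2})=h_2^{-1}\Phi(\xi_{h_1})\cup\Phi(\xi_{h_2})$, a point your ``character twists'' remark gestures at but does not pin down. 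As it stands your text is a correct plan whose load-bearing steps (Theorem \ref{thm:cocycle}, Lemma \ref{lem:leadingterm}, Lemmas \ref{lem:gij} and \ref{lem:fij}) remain to be supplied.
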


Shklyarov's multiplication formula comes from Hochschild cohomology of a curved algebra which is the endomorphism algebra of a compact generator of the category. On the other hand, we obtain the formula inside the whole category of matrix factorizations. 
Since we take an explicit Koszul matrix factorization and consider its endomorphisms, the product is described by calculus of Clifford generators which are nothing but differential forms and interior products. 
In our formulation, complicatedness appears in the expression of closed endomorphisms of $\Delta_W^{G\times G}$. 
Therefore, classification of closed endomorphisms of $\Delta_W^{G\times G}$ is the most important starting point. 
We will observe that we recover Shklyarov's formula by restricting our multiplication of endomorphisms to the twisted diagonal.

The main motivation of this work is based on the mirror symmetry. Singularity often appears as a mirror counterpart of a symplectic manifold. Roughly, a Lagrangian submanifold $\bL\subset X$ which can be "deformed" gives rise to an algebraic function $W_\bL$ on the deformation space. In some case, the singularity theory of $W_\bL$ is "equivalent" to the symplectic geometry of $X$ itself, in which case we say that $W_\bL$ is a {\em Landau-Ginzburg mirror} to $X$. A concrete mirror symmetry statement was given by Kontsevich as follows.
\begin{conjecture}[Homological Mirror Symmetry]
Let $X$ be a symplectic manifold and $W: \check{X} \to k$ be a Landau-Ginzburg mirror to $X$. Then we have an equivalence of categories
\begin{equation}\label{eq:HMS}
 Fuk(X) \simeq MF(W).
 \end{equation}
\end{conjecture}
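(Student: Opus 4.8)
The plan is to follow the \emph{generate-and-compare} blueprint common to all known verifications of \eqref{eq:HMS}; the conjecture is of course open in this generality, so what follows is the scheme one instantiates for a given model of $(\check X, W)$. The starting point is to choose a Lagrangian $\bL \subset X$ (or a finite collection) expected to split-generate the derived, idempotent-completed Fukaya category. Following the homological mirror functor formalism, one equips $\bL$ with its space of weakly unobstructed bounding cochains; the associated disk potential \emph{is} the Landau-Ginzburg mirror $W = W_\bL$ on $\check X$, and counting holomorphic polygons with one boundary component on $\bL$ produces an $A_\infty$-functor $\CF \colon Fuk(X) \to MF(W_\bL)$ sending $\bL$ to a distinguished Koszul-type matrix factorization $\CF(\bL)$. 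One must separately identify this abstract disk potential with the prescribed polynomial $W$ (including the correct equivariant grading by the generators $\xi_g$ as in \eqref{eq:underlyingmodule} when a symmetry group is present) --- itself a nontrivial curve-counting computation.

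Next I would prove full faithfulness, i.e. that $\CF$ induces quasi-isomorphisms $\HF^*(L_0, L_1) \xrightarrow{\ \sim\ } \Hom_{MF(W_\bL)}(\CF(L_0), \CF(L_1))$ for a generating set of pairs $(L_0, L_1)$. The first and decisive case is $L_0 = L_1 = \bL$: the right-hand side is computed by the Clifford/Koszul calculus on endomorphisms of an explicit matrix factorization --- exactly the kind of computation carried out in the body of this paper for the (twisted) diagonal $\Delta_W^{G\times G}$ --- and it must be matched with the self-Floer cohomology of $\bL$ together with its bounding cochain. For the remaining pairs one filters the Floer and morphism complexes by energy (equivalently by powers of the Novikov/deformation parameter), checks that the leading-order part of $\CF$ is a known isomorphism, and upgrades via a spectral-sequence comparison argument.

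To reach an actual equivalence one then needs split-generation on both sides. On the $B$-side, $\CF(\bL)$ --- together with its $G$-twists when $(W,G)$ is a Landau-Ginzburg orbifold, so that one works in $MF_G(W_\bL)$ --- must split-generate $MF(W_\bL)$; this is an Orlov/Dyckerhoff-type statement, which reduces to showing that the residue field of the singular point is built out of these objects. On the $A$-side I would invoke an Abouzaid-style split-generation criterion: if the open-closed map $\OC$ hits the unit of symplectic cohomology (more precisely, if it is surjective in the relevant degree), then $\bL$ split-generates $Fuk(X)$. A fully faithful $A_\infty$-functor whose source is split-generated by $\bL$ and whose image split-generates the target becomes an equivalence after idempotent completion, yielding \eqref{eq:HMS}.

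The main obstacle is the $A$-side generation step: verifying the hypothesis of the split-generation criterion requires genuine geometric input --- an explicit Weinstein or Lefschetz-fibration presentation of $X$ with controlled vanishing cycles, or a direct surjectivity proof for $\OC$ --- and this is precisely where no uniform method exists. A secondary bottleneck is full faithfulness beyond the pair $(\bL, \bL)$: it demands control of \emph{all} higher holomorphic-polygon counts rather than merely the leading term, and the filtration arguments that make this tractable are delicate and must be re-checked model by model.
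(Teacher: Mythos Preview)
The statement you are attempting to prove is labeled a \emph{Conjecture} in the paper, not a theorem, and the paper offers no proof of it whatsoever. It is quoted as Kontsevich's Homological Mirror Symmetry conjecture purely for context and motivation; the paper's actual results (Theorems~A, B, C) concern the algebra structure of $\Jac'(W,G)$ and its identification with an endomorphism algebra of $\Delta_W^{G\times G}$, and then apply this to Floer cohomology of specific Lagrangians. So there is nothing to compare your proposal against.

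Your write-up is a reasonable summary of the generate-and-compare strategy used in known instances of HMS (Seidel, Abouzaid, Sheridan, Cho--Hong--Lau, etc.), and you correctly flag that it is open in general and identify the genuine obstructions (split-generation on the $A$-side, full faithfulness beyond the generator). But as a ``proof'' of the conjecture it is not one, and you say as much yourself. For the purposes of this paper no proof is expected or required: the conjecture is background, and the relevant input the paper actually uses is the localized mirror functor $\CF^{\bL}$ of \cite{CHL1,CHL2} together with the Floer-theoretic results of \cite{CL}, not a proof of \eqref{eq:HMS}.
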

For some symplectic manifolds, we can construct an explicit (curved) Yoneda functor $\CF^\bL$ giving the equivalence \eqref{eq:HMS} (see \cite{CHL1,CHL2} for example) with the representing Lagrangian submanifold $\bL$. In \cite{CL} it was shown that Lagrangian Floer theory of $\bL$ is deeply related to $\Jac(W_\bL)$. We consider a technical assumption (which is Assumption \ref{as:floergen}) which is expected to be satisfied by representing Lagrangians for mirror functors, and is indeed true for various examples. If $\bL\subset X$ satisfies Assumption \ref{as:floergen}, then the following holds:
\[ HF(\bL^x,\bL^x)\otimes R \cong \Jac(W_\bL)\]
where $R$ means the function ring of the deformation space of $\bL$ and $x$ means its coordinates. We refer to \cite{CL} for details and examples.

The story becomes even more interesting for $\bar{\bL} \subset X/G$, where $X/G$ is a global quotient orbifold and $\bar{\bL}$ satisfies Assumption \ref{as:floergen}. Let $\pi: X \to X/G$ be the quotient map. It was shown in \cite{CL} that their $\HG$-invariant subalgebras are indeed isomorphic as algebras.
\begin{equation}\label{eq:orbjacresult}
 \big( HF\big(\pi^{-1}(\bar{\bL}^x),\pi^{-1}(\bar{\bL}^x)\big)\otimes R\big)^\HG \cong \Jac'(W_{\bar{\bL}},\HG)^\HG.
\end{equation}

Our application of Theorem \ref{maintheorem} is that we indeed have an isomorphism of "$\HG$-liftings" of above.
\begin{thmx} We have an algebra isomorphism
\begin{equation}\label{eq:mainquestion}
 HF\big(\pi^{-1}(\bar{\bL}^x),\pi^{-1}(\bar{\bL}^x)\big)\otimes R \cong \Jac'(W_{\bar{\bL}},\HG).
 \end{equation}
\end{thmx}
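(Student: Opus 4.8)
The plan is to deduce \eqref{eq:mainquestion} from Theorem \ref{maintheorem} together with the categorical input of \cite{CL}. Recall that under Assumption \ref{as:floergen}, the Floer cohomology of the representing Lagrangian computes an endomorphism algebra inside the relevant Fukaya-type category, so the left-hand side of \eqref{eq:mainquestion} should be identified with an endomorphism algebra of the object representing the homological mirror functor $\CF^{\bar{\bL}}$ for the \emph{orbifold} $X/G$. The key observation is that passing from $\bar{\bL}\subset X/G$ to its preimage $\pi^{-1}(\bar{\bL})\subset X$ corresponds, on the mirror side, precisely to the forgetting of the second $G$-equivariant structure: the equivariance datum that was used to descend $\bL$ to $X/G$ survives only as the $1\times G$ part rather than the full $G\times G$ part. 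So the first step is to make precise the statement that $HF(\pi^{-1}(\bar{\bL}^x),\pi^{-1}(\bar{\bL}^x))\otimes R$ is $\Hom_{MF_{1\times G}(W_{\bar\bL}\boxminus W_{\bar\bL})}(\Delta_{W_{\bar\bL}}^{G\times G},\Delta_{W_{\bar\bL}}^{G\times G})$ as algebras; once this is in hand, Theorem \ref{maintheorem} applied to the Landau-Ginzburg orbifold $(W_{\bar\bL},\HG)$ finishes the proof immediately.

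Concretely, the steps I would carry out are as follows. First, recall from \cite{CL} the construction of the curved Yoneda functor $\CF^{\bar\bL}$ and the fact that $W_{\bar\bL}$ is the potential recording the curvature of $\bar\bL$; the object of $MF(W_{\bar\bL})$ attached by $\CF^{\bar\bL}$ to $\bar\bL$ itself is a Koszul-type matrix factorization, and the $G$-equivariant enhancement of $\bar\bL$ (coming from the fact that $\bar\bL$ lives on the quotient) equips this object with equivariant structure. Second, identify $\pi^{-1}(\bar\bL)$: as an object of the Fukaya category of $X$ it is the "induction" of $\bar\bL$, and the $A_\infty$-algebra $HF(\pi^{-1}(\bar\bL^x),\pi^{-1}(\bar\bL^x))$ carries a residual $G$-action whose structure matches exactly the $1\times G$-equivariance on the matrix factorization side (the other copy of $G$ being eaten up by the deformation/mirror construction). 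Third, invoke the comparison of \cite{CL} — which already established \eqref{eq:orbjacresult} after taking $\HG$-invariants — and upgrade it by dropping the invariants: the argument there actually produces the isomorphism before passing to invariants, provided one works inside the full matrix factorization category rather than with a compact generator, which is precisely the setting Theorem \ref{maintheorem} provides. Fourth, match the multiplications: on the symplectic side the product is the Floer triangle product, and one checks it corresponds to the Yoneda/composition product on $\Hom_{MF_{1\times G}}(\Delta_W^{G\times G},\Delta_W^{G\times G})$ under the identifications above, using that $\CF^{\bar\bL}$ is an $A_\infty$-functor and hence respects products up to the usual homotopies.

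The main obstacle I expect is Step two and the multiplicativity part of Step four: showing that the $G$-equivariant structure on $HF(\pi^{-1}(\bar\bL^x),\pi^{-1}(\bar\bL^x))$ genuinely corresponds to the $1\times G$ (and not, say, a twisted or diagonal) equivariance on $\Delta_{W_{\bar\bL}}^{G\times G}$, and that the correspondence is compatible with products and not merely with underlying modules. The subtlety is that $\Delta_W^{G\times G}$ is a matrix factorization of the \emph{twisted} diagonal ideal, so the $G\times G$-equivariance is built into its very definition; one has to argue carefully that the piece of this structure visible from the geometry of $\pi^{-1}(\bar\bL)$ in $X$ is the second $G$-factor acting by deck transformations, while the first $G$-factor is exactly what is forgotten when one does not quotient. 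Concretely this means tracking the fixed-locus decomposition \eqref{eq:underlyingmodule}: the summand $\Jac(W^g)\xi_g$ should correspond to the contribution of the $g$-twisted sector of $\pi^{-1}(\bar\bL)$, i.e. pairs of preimage branches differing by $g\in G$, and one must verify that the product of such twisted-sector Floer classes reproduces Shklyarov's multiplication as recovered in Theorem \ref{maintheorem}. I would handle this by reducing to the case where $\CF^{\bar\bL}$ is an honest (not merely curved-$A_\infty$) equivalence on a subcategory containing $\bar\bL$ and $\pi^{-1}(\bar\bL)$, where the statement becomes a formal consequence of functoriality of Hochschild-type invariants, and then appealing to Assumption \ref{as:floergen} to guarantee that the relevant generation and non-degeneracy hypotheses hold.
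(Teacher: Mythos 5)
Your proposal takes exactly the route of the paper: the paper's entire proof is to cite from \cite{CL} the algebra isomorphism $HF\big(\pi^{-1}(\bar{\bL}^x),\pi^{-1}(\bar{\bL}^x)\big)\otimes R \cong \Hom_{MF_{1\times \HG}(W_{\bar{\bL}}\boxminus W_{\bar{\bL}})}(\Delta_{W_{\bar{\bL}}}^{\HG\times\HG},\Delta_{W_{\bar{\bL}}}^{\HG\times\HG})$ and then compose it with Theorem \ref{maintheorem}, which is precisely your first paragraph. The remaining steps you outline (re-deriving the \cite{CL} isomorphism, matching the $1\times G$-equivariance and the triangle product) are not needed, since that statement is taken as established input from \cite{CL} rather than proved here.
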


\begin{proof}
We combine the following algebra isomorphism in \cite{CL} with \eqref{eq:mainthm}:
\begin{equation*}
 HF\big(\pi^{-1}(\bar{\bL}^x),\pi^{-1}(\bar{\bL}^x)\big)\otimes R \cong \Hom_{MF_{1\times \HG}(W_{\bar{\bL}}\boxminus W_{\bar{\bL}})}(\Delta_{W_{\bar{\bL}}}^{\HG\times\HG},\Delta_{W_{\bar{\bL}}}^{\HG\times\HG}).\qedhere
\end{equation*}
\end{proof}

We conclude the introduction by the following nontrivial result on the structure of Floer cohomology of $\pi^{-1}(\bar{\bL}^x)$.

\begin{thmx}
$HF\big(\pi^{-1}(\bar{\bL}^x),\pi^{-1}(\bar{\bL}^x)\big)\otimes R$ is a $\HG$-twisted commutative algebra.
\end{thmx}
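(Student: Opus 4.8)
The plan is to transport the assertion, through the two algebra isomorphisms already available, to a purely algebraic property and then to verify that property in the explicit matrix-factorization model. By the isomorphism \eqref{eq:mainquestion} of the preceding theorem it suffices to prove that $\Jac'(W_{\bar{\bL}},\HG)$ is an $\HG$-twisted commutative algebra, and by Theorem \ref{maintheorem}, applied to $(W,G):=(W_{\bar{\bL}},\HG)$, it suffices to prove that $\Hom_{MF_{1\times G}(W\boxminus W)}(\Delta_W^{G\times G},\Delta_W^{G\times G})$ is $G$-twisted commutative. The first thing to pin down is the $G$-grading on this $\Hom$-algebra: under \eqref{eq:mainthm} the summand $\Jac(W^g)\cdot\xi_g$ of $\Jac'(W,G)$ must correspond to the subspace of $1\times G$-equivariant endomorphisms whose failure to be $G\times G$-equivariant is measured by $g$, and making this identification of gradings precise — it is implicit in the construction behind Theorem \ref{maintheorem} — is the starting point.

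To check the twisted commutativity I would use the Clifford model for closed endomorphisms of $\Delta_W^{G\times G}$: a homogeneous degree-$g$ endomorphism is assembled from wedgings with the differential forms new along $\mathrm{Fix}(g)$ and the dual interior products, with coefficient a class in $\Jac(W^g)$. After passing to cohomology, the product of a degree-$g$ endomorphism $\phi$ with a degree-$h$ endomorphism $\psi$ is computed by composing these Clifford operators and inserting the Hessian-type factor in the directions normal to $\mathrm{Fix}(gh)$ inside $\mathrm{Fix}(g)\cap\mathrm{Fix}(h)$; as observed above, this is the multiplication that recovers Shklyarov's formula \cite{Shk} upon restriction to the twisted diagonal. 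Interchanging $\phi$ and $\psi$ moves the wedge and interior-product operators of the two sectors past one another; the Hessian factor is symmetric and so contributes no sign, and the remaining discrepancy is $\pm 1$, fixed by the parities of the form-degrees together with a sign depending only on $(g,h)$ coming from reordering the generators new along $\mathrm{Fix}(g)$ and along $\mathrm{Fix}(h)$. Matching this $\pm 1$ with the character that Kaufmann \cite{Kau} introduced into the commutativity axiom gives the theorem.

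A more structural route is also worth trying. Since $\Delta_W^{G\times G}$ is the identity kernel, the product on the $\Hom$-algebra is the composition product of natural transformations of endofunctors of $MF_G(W)$, now graded by twisted sectors: the degree-$g$ part consists of transformations from $\mathrm{Id}$ to the autoequivalence $g_{*}$ given by the $g$-action. An Eckmann--Hilton argument at the cochain level — the same one underlying graded-commutativity of ordinary Hochschild cohomology — compares the two orders of multiplication through the horizontal composites $g_{*}\psi$ and $h_{*}\phi$ and the canonical identification $g_{*}\circ h_{*}\cong h_{*}\circ g_{*}$; tracking this identification is what produces the required sign and character.

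In either approach the one genuinely delicate point, which I expect to be the main obstacle, is the twisting itself: it is not enough to obtain commutativity up to \emph{some} sign, one must show that the sign and character that appear are exactly those prescribed by the (Kaufmann-modified) axiom of $G$-twisted commutativity, uniformly over all pairs $(g,h)$. Concretely this is either the Koszul-sign bookkeeping of the Clifford calculus together with the equivariant correction term, or the precise determination of the braiding among the autoequivalences $g_{*}$ and the check that it carries that character.
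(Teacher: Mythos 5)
Your first reduction is exactly the paper's route: by the isomorphism \eqref{eq:mainquestion} the statement is transported to the assertion that $\Jac'(W_{\bar{\bL}},\HG)$ is $\HG$-twisted commutative. But at that point you should stop: this assertion is not something that needs to be re-derived from the matrix-factorization model, because it is precisely Theorem~\ref{thm:shkcomm}, quoted in Section~\ref{sec:prelim} from Shklyarov \cite{Shk} --- $\xi_g\bullet\xi_h=(-1)^{|\xi_g||\xi_h|}\xi_h\bullet h^{-1}\xi_g$ holds in $\Jac'(W,G)$ for any Landau--Ginzburg orbifold. The intended proof is therefore a one-line combination of the isomorphism \eqref{eq:mainquestion} with Theorem~\ref{thm:shkcomm}; indeed the corollary following Theorem~\ref{thm:main} (braided super-commutativity of $\bigoplus_g\Hom(\Delta_1,\Delta_g)$) is itself obtained by pulling Shklyarov's identity through $\Phi$, not the other way around.

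The remainder of your proposal --- reducing further to $\Hom_{MF_{1\times G}(W\boxminus W)}(\Delta_W^{G\times G},\Delta_W^{G\times G})$ and verifying twisted commutativity there by Clifford calculus or an Eckmann--Hilton argument --- is a legitimate alternative program but is not carried out: you yourself flag the determination of the exact sign and character as the unresolved obstacle, and that determination is the entire content of such a verification. As written, the proof is therefore incomplete. Nothing in the sketch is wrong, but the step ``the remaining discrepancy is $\pm1$ \dots matching this with Kaufmann's character gives the theorem'' is an assertion of the conclusion rather than an argument; the sign bookkeeping in the Clifford model (cf.\ the signs $\epsilon_l$ in \eqref{epsilonsign} and the reorderings in the proof of Theorem~\ref{thm:cocycle}) is exactly the kind of computation that cannot be waved through. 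Replace that portion by a citation of Theorem~\ref{thm:shkcomm} and the proof is complete.
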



The paper is organized as follows. In Section \ref{sec:prelim}, we recall the notion of twisted Jacobian algebras following \cite{Shk}. Then we give a brief overview on categories of equivariant matrix factorizations. In particular, we investigate the structure of a kernel $\Delta_W^{G\times G}$ for $\mathrm{Id}_{MF_G(W)}$ in detail. We conclude Section \ref{sec:prelim} by showing that if we consider $(1\times G)$-equivariant endomorphisms of $\Delta_W^{G\times G}$, then it gives rise to a module which is isomorphic to $\Jac'(W,G)$, using spectral sequence argument. In Section \ref{sec:classification}, we classify all closed $(1\times G)$-equivariant endomorphisms of $\Delta_W^{G\times G}$. We remark that the endomorphism space of $\Delta_W^{G\times G}$ is a double complex and we find its closed elements by "extending leading order terms". In Section \ref{sec:comparemulti} we compare multiplications of our closed endomorphisms of $\Delta_W^{G\times G}$ and those in \cite{Shk}. In Section \ref{sec:application} we connect our main result to Lagrangian Floer theory and investigate an example of Floer cohomology in $T^2$ with $\Z_6$-action.

\subsection{Standing assumptions and notations}
\begin{enumerate}
\item $R=k[x_1,\cdots,x_n]$ is a polynomial ring over a field $k$. $W$ is a polynomial in $R$ and has a critical point at the origin with the critical value $0$. $G$ is a finite abelian group which acts diagonally on $k[x_1,\cdots,x_n]$, i.e. $g\cdot x_i=g_i x_i$ for some $g_i \in k^*$. We also let $S=k[x_1,\cdots,x_n,y_1,\cdots,y_n]$.
\item $W$ is invariant under $G$-action, i.e.
\[ W(gx_1,\cdots,gx_n)=W(x_1,\cdots,x_n)\] for any $g\in G$. 
\item For $g\in G$, define
\begin{equation}\label{eq:movindex}
 I^g:=\{i \mid 1\leq i \leq n, \; g\cdot x_i=x_i\},\quad I_g:= \{1,\cdots,n\}\setminus  I^g, \quad d_g:=| I_g|.
 \end{equation}
Denote the natural projection map by
\[ \pi_g: R \to R/(x_i : i\in I_g),\]
and for any $f\in R$,
\begin{equation}\label{def:Wfixed}
 f^g:=\pi_g(f).
 \end{equation}
 In particular, 
 \[ x_i^g=
 \begin{cases}
 x_i&{\rm if\;\;} gx_i=x_i \\
 0& {\rm otherwise.}
 \end{cases}
 \]
\item Sometimes we just write
\[ W(x):=W(x_1,\cdots,x_n),\quad W(y):=W(y_1,\cdots,y_n).\]
Also, for a polynomial $f \in S$, we write 
\[ f(x,y):=f(x_1,\cdots,x_n,y_1,\cdots,y_n).\]
We will specify variables for $W\boxminus W$ as $W(y)-W(x)$.
\end{enumerate}
 \subsection{Acknowledgements}
The author would like to express his deep gratitude to the late Bumsig Kim. His life, not only as a mathematician but also as a person, was always an inspiration. The author has been greatly indebted to him for his continuous encouragement, support and the brilliant insight which he willingly shared with his companions. The author would like to thank Cheol-Hyun Cho for his interest in this work and giving a number of suggestions on the earlier draft. 

This work was supported by the National Research Foundation of Korea(NRF) grant funded by the Korea government(MSIT)(No. 202117221032) and Basic Science Research Program through the National Research Foundation of Korea (NRF) funded by the Ministry of Education (2021R1A6A1A10044154).

\section{Preliminaries}\label{sec:prelim}
In this section, first we briefly recall the construction of twisted Jacobian algebras mainly following \cite{Shk}. Then we collect some essential ingredients concerning equivariant matrix factorizations. Finally we reconstruct the underlying module of a twisted Jacobian algebra in terms of matrix factorizations. 
\subsection{Twisted Jacobian algebras}
\begin{definition}
The {\em Jacobian algebra} of $W\in k[x_1,\cdots,x_n]$ is defined by
\[ \Jac(W):= \frac{k[x_1,\cdots,x_n]}{(\frac{\partial W}{\partial x_1}, \cdots, \frac{\partial W}{\partial x_n})}.\]
\end{definition}
Before introducing $G$-twisted Jacobian algebras of Landau-Ginzburg orbifolds, we need to recall the following.
\begin{definition} The {\em $n$th graded Clifford algebra} is a $\Z$-graded $k$-algebra 
\[ \mathrm{Cl}_n:=k[\theta_1,\cdots,\theta_n,\partial_{1},\cdots,\partial_{n}],\;\; |\theta_i|=-1,\;\; |\partial_{i}|=1 \;\;\text{for all $i$}\]
together with relations
\begin{equation}\label{eq:cliffrel}
 \theta_i \theta_j=-\theta_j \theta_i,\;\; \partial_{i}\partial_{j}=-\partial_{j}\partial_{i},\;\; \partial_{i}\theta_j=-\theta_j\partial_{i}+\delta_{ij}.
 \end{equation}
\end{definition}

\begin{notation}
For an ordered subset $I=\{i_1<\cdots<i_r\} \subset \{1,\cdots,n\}$, we denote
\[ \theta_I:=\theta_{i_1}\cdots\theta_{i_r},\quad \partial_I:=\partial_{i_1}\cdots\partial_{i_r}.\]
\end{notation}
Let $W$ be a polynomial in variables $x=(x_1,\cdots,x_n)$. We define a difference operator
\[ \nabla_j^{x \to (x,y)} W:= \frac{W(y_1,\cdots,y_j,x_{j+1},\cdots,x_n)-W(y_1,\cdots,y_{j-1},x_j,\cdots,x_n)}{y_j-x_j}.\]
A successive application of difference operators like $\nabla_i^{y \to (y,z)}\nabla_j^{x \to (x,y)}W$ for $i \leq j$ means that we obtain a polynomial in variables $(x,y)$ first by $\nabla_j^{x\to (x,y)}$, then we treat it as a polynomial only in $y$-variables(that is, consider $x$-variables as constants) and apply $\nabla_i^{y \to (y,z)}$.

\begin{definition}\label{def:ojr}[\cite{Shk}]
The {\em $G$-twisted Jacobian algebra} of a Landau-Ginzburg orbifold $(W,G)$ is a $(\Z_2\times G)$-graded algebra
\[\Jac'(W,G) = \bigoplus_{h \in G} \Jac(W^h) \cdot \xi_h \]
where $\xi_h$ is a formal generator of degree $d_h \; {\rm mod}\; 2$. The structure constant $\sigma_{g,h}\in \Jac(W^{gh})$ for the product of $\xi_g$ and $\xi_h$ is given by the coefficient of ${\theta_{I_{gh}}}:=\prod_{i \in I_{gh}} {\theta_i}$ in the following expression
\begin{equation}\label{eq:twjacprod}
\frac{1}{d_{g,h}!} \Upsilon\big( (\lfloor H_W(x,g\cdot x,x)\rfloor_{gh}+\lfloor H_{W,g}(x)\rfloor_{gh}\otimes 1 + 1\otimes \lfloor H_{W,h}(g\cdot x)\rfloor_{gh})^{d_{g,h}}\otimes {\theta_{I_g}} \otimes {\theta_{I_h}}\big),
\end{equation}
where
\begin{itemize}
\item $\displaystyle H_W(x,y,z):=\sum_{1\leq i\leq j \leq n} \nabla_i^{y \to (y,z)}\nabla_j^{x\to (x,y)}(W)\partial_j\otimes\partial_i$.
\item $\displaystyle H_{W,g}:= \sum_{i,j\in I_g,i<j}\frac{1}{1-g_i}\nabla_i^{x \to (x,x^g)}\nabla_j^{x \to (x,g\cdot x)}(W)\partial_i \partial_j \in R[\partial_1,\cdots,\partial_n]$. Recall that 
\[ x_i^g=
\begin{cases}
x_i & {\rm if\;} i\in I^g \\
0& {\rm if\;} i\in I_g.
\end{cases}
\]
 The operations $\nabla_i^{x \to (x,g\cdot x)}$ and $\nabla_i^{x \to (x,x^g)}$ are computed from $\nabla_i^{x \to (x,y)}$ followed by substitutions $y=g\cdot x$ and $y=x^g$, respectively.
\item $\lfloor f \rfloor_{g}:= [f^g] \in \Jac(W^g).$
\item $d_{g,h}:=\frac{d_g+d_h-d_{gh}}{2}$. Define $\sigma_{g,h}=0$ if $d_{g,h}$ is not an integer.
\item The map $\Upsilon$ is defined as
\[\Upsilon:R[\partial_1,\cdots,\partial_n]^{\otimes 2}\otimes R[{\theta_1},\cdots,{\theta_n}]^{\otimes 2} \to R[{\theta_1},\cdots,{\theta_n}],\] 
\[ p_1({\partial})\otimes p_2({\partial})\otimes q_1({\theta})\otimes q_2({\theta}) \mapsto (-1)^{|q_1||p_2|}p_1(q_1)\cdot p_2(q_2),\]
where $p_i (q_i)$ means the action of $p_i(\partial)$ on $q_i(\theta)$ on $k[\theta_1,\cdots,\theta_n]$ via \eqref{eq:cliffrel}.
\end{itemize}
\end{definition}

The algebra in Definition \ref{def:ojr} is worth being called the twisted Jacobian algebra due to the following result.
\begin{theorem}[\cite{Shk}]\label{thm:shkcomm}
$\Jac'(W,G)$ is braided super-commutative, that is
\[ \xi_g \bullet \xi_h=(-1)^{|\xi_g||\xi_h|}\xi_h \bullet h^{-1}\xi_g.\]
\end{theorem}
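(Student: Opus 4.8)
Although this is Shklyarov's theorem, I sketch how one would re-derive it structurally, rather than by manipulating the combinatorial formula \eqref{eq:twjacprod}; there are two natural routes, one purely algebraic (in the spirit of \cite{Shk}) and one via Theorem \ref{maintheorem}, and I describe both. Recall (as in the introduction) that Shklyarov realizes $\Jac'(W,G)$ as the Hochschild cohomology of the curved algebra $(R,W)$ with coefficients in the crossed product $R\rtimes G=\bigoplus_{h\in G}R\cdot h$, the $G$-grading \eqref{eq:underlyingmodule} being the one induced by this decomposition, and the product $\xi_g\bullet\xi_h$ being the cup product
\[ HH^*\big((R,W);R\cdot g\big)\otimes HH^*\big((R,W);R\cdot h\big)\longrightarrow HH^*\big((R,W);R\cdot gh\big) \]
induced by the multiplication $R\cdot g\otimes_R R\cdot h\to R\cdot gh$ in $R\rtimes G$.

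On the algebraic side the steps I would carry out are: (1) note that the $G$-grading is compatible with the cup product, i.e. $HH^*((R,W);R\cdot g)\cdot HH^*((R,W);R\cdot h)\subseteq HH^*((R,W);R\cdot gh)$; (2) recall, in a Tate/curved model, the graded-commutativity of the Gerstenhaber cup product on $HH^*((R,W);R)$ itself (equivalently, run the Eckmann--Hilton argument on $\mathrm{End}(\mathrm{Id})$), together with its compatibility with the module structure of $HH^*((R,W);-)$ over it; (3) compare the product above with the one obtained by interchanging $g$ and $h$, using the two canonical isomorphisms $R\cdot g\otimes_R R\cdot h\cong R\cdot gh\cong R\cdot h\otimes_R R\cdot g$. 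For abelian $G$ the left and right $R$-module structures on $R\cdot g$ differ by the automorphism $g$, so these two isomorphisms differ precisely by the $h^{-1}$-action on the $R\cdot g$-factor; combining this with the Koszul sign $(-1)^{|\xi_g||\xi_h|}$ produced by permuting cochains of degrees $|\xi_g|,|\xi_h|$ yields exactly $\xi_g\bullet\xi_h=(-1)^{|\xi_g||\xi_h|}\,\xi_h\bullet(h^{-1}\xi_g)$.

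The second route is to invoke Theorem \ref{maintheorem} to replace $\Jac'(W,G)$ by $\Hom_{MF_{1\times G}(W\boxminus W)}(\Delta_W^{G\times G},\Delta_W^{G\times G})$, where $\Delta_W^{G\times G}$ is a kernel for $\mathrm{Id}_{MF_G(W)}$, and to run the Eckmann--Hilton argument there: this endomorphism space carries, besides composition, a second ``convolution'' product coming from composition of the corresponding functors; these two products share the unit $\mathrm{id}_{\Delta_W^{G\times G}}$ and satisfy the interchange law, hence both are graded commutative; and because the $G$-equivariance in $MF_{1\times G}$ is carried by the second factor only, interchanging a $G$-homogeneous endomorphism of degree $g$ with one of degree $h$ forces a twist of the former by the $h$-action on that factor, which is the source of the $h^{-1}$.

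I expect the main obstacle to be the sign-and-twist bookkeeping at the cochain level: pinning down the canonical isomorphism $R\cdot g\otimes_R R\cdot h\cong R\cdot h\otimes_R R\cdot g$ (equivalently, the interchange isomorphism for endomorphisms of $\Delta_W^{G\times G}$) precisely enough that the twist lands on $\xi_g$ as $h^{-1}$, and checking that the Koszul signs accumulated while permuting cochains collapse, on cohomology, to the single sign $(-1)^{|\xi_g||\xi_h|}$; one must also verify that the identification in Theorem \ref{maintheorem} is an isomorphism of $G$-graded algebras, so that the property may legitimately be transported. Should the homological machinery prove cumbersome, the fallback is Shklyarov's own direct verification from \eqref{eq:twjacprod}, exploiting the symmetry of $H_W(x,y,z)$ under reversing the chain $x\to y\to z$ together with the elementary identity relating $\tfrac{1}{1-g_i}$, $\tfrac{1}{1-h_i}$ and $\tfrac{1}{1-(gh)_i}$ for $i\in I_g\cap I_h$; this is routine but computationally heavy.
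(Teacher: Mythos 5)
First, a point of calibration: the paper does not prove Theorem \ref{thm:shkcomm} at all --- it is imported from \cite{Shk} and used as a black box (indeed, the paper runs the logic in the direction opposite to your second route: it combines Shklyarov's braided commutativity with Theorem \ref{thm:main} to deduce braided commutativity of $\bigoplus_{g}\Hom(\Delta_1,\Delta_g)$, stated as the corollary following Theorem \ref{thm:main}). So there is no in-paper proof to compare against, and your sketch must stand on its own.

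As it stands, both of your routes share the same gap, and it sits exactly where the content of the theorem lives. An Eckmann--Hilton argument, if it applied, would yield strict graded commutativity $\xi_g\bullet\xi_h=(-1)^{|\xi_g||\xi_h|}\xi_h\bullet\xi_g$ with no twist, which is false in general and is precisely what the $h^{-1}$ in the statement corrects; your route 2 asserts ``hence both are graded commutative'' and then immediately introduces a twist, which is internally inconsistent. The reason Eckmann--Hilton does not apply is structural: in route 1 the cup product has coefficients in the noncommutative bimodule $R\rtimes G$, and graded commutativity of $HH^*(A;A)$ (which comes from the Gerstenhaber pre-Lie identity, not from an interchange law) does not extend to bimodule coefficients; in route 2, $\Delta_W^{G\times G}$ is the identity kernel for $MF_{G\times G}(W\boxminus W)$, but you take its endomorphisms in the larger category $MF_{1\times G}(W\boxminus W)$, where the convolution product no longer shares a unit with composition on each graded piece and the interchange law holds only up to the equivariance twist. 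Consequently the ``sign-and-twist bookkeeping'' you defer to the end is not a technical afterthought but the entire proof: one must actually exhibit the homotopy (or, in the fallback, verify the symmetry of the formula \eqref{eq:twjacprod} under exchanging $g$ and $h$, using the symmetry of $H_W$ under reversal of $x\to y\to z$ and the relation among $\tfrac{1}{1-g_i}$, $\tfrac{1}{1-h_i}$, $\tfrac{1}{1-(gh)_i}$ for $i\in I_g\cap I_h$) showing that the discrepancy between the two orderings is exactly the character by which $h^{-1}$ acts on $\xi_g$ times the Koszul sign. Your proposal correctly identifies where this computation must happen but does not carry it out, so it is a plan rather than a proof.
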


\subsection{Categories of matrix factorizations}
We recall categorical invariants of singularities.

\begin{definition}
\begin{itemize}
\item A {\em matrix factorization} $(P,d)$ of $W$ is a $\Z/2$-graded free $R$-module $P=P^0 \oplus P^1$ and a degree $1$ endomorphism $d$ of $P$ such that $d^2=W \cdot \id$. 
\item For $j\in \Z/2$, a {\em morphism of degree $j$} from $(P,d)$ to $(Q,d')$ is an $R$-linear map $\phi$ of degree $j$ from $P$ to $Q$. The {\em differential} of $\phi$ is defined by
\[ D\phi:= d' \circ \phi + (-1)^{|\phi|+1}\phi \circ d.\]
\end{itemize}
\end{definition}
It is a standard fact that matrix factorizations of $W$ form a dg category $MF(W)$. Consequently, we can construct the cohomology category $HMF(W)$ from $MF(W)$. Given objects $(P,d)$ and $(Q,d')$, the space of morphisms from $P$ to $Q$ in $MF(W)$ will be denoted by $hom_{MF(W)}(P,Q)$. The morphism space in the cohomology category $HMF(W)$ is denoted by $\Hom_{HMF(W)}(P,Q)$. If the context is clear, we will just write $hom(P,Q)$ and $\Hom(P,Q)$ respectively.

\begin{definition}
Let $G$ be a group which acts on $k[x_1,\cdots,x_n]$ and let $W$ be a polynomial in $k[x_1,\cdots,x_n]$ which is invariant under $G$-action. 
\begin{itemize}
\item A {\em $G$-equivariant matrix factorization} of $W$ is a matrix factorization $(P,d)$ where $P$ is equipped with a $G$-action and $d$ is $G$-equivariant. 
\item The {\em category of $G$-equivariant matrix factorization} $MF_G(W)$ is a dg category which consists of $G$-equivariant matrix factorizations of $W$ as objects, and $G$-equivariant $R$-linear maps between them as morphisms. Differentials and compositions are defined as in nonequivariant case.
\end{itemize}
\end{definition}

Let us review a little bit of To\"en's derived Morita theory and apply it to matrix factorizations.
\begin{definition}[\cite{To}]
Let $\CA$ and $\CA'$ be dg categories. A dg functor $\CF: \CA \to \CA'$ is {\em continuous} if it commutes with direct sums. The dg category of continuous dg functors from $\CA$ to $\CA'$ is denoted by $R\underline{hom}_c(\CA,\CA')$.
\end{definition}

\begin{definition}[\cite{To}]
The {\em Hochschild cochain complex} of a dg category $\CA$ is 
\[ C^*(\CA,\CA):= hom_{R\underline{Hom}_c(\CA,\CA)}(\mathrm{Id}_\CA,\mathrm{Id}_\CA)\]
and its cohomology is called the {\em Hochschild cohomology} of $\CA$, denoted by $HH^*(\CA)$.
\end{definition}
By modifying results in \cite{To} for differential $\Z/2$-graded categories and applying them to matrix factorizations, Dyckerhoff showed the following.
\begin{theorem}[\cite{Dyc}]
There is a quasi-equivalence between two dg categories
\begin{equation}\label{eq:ker}
 R\underline{Hom}_c(MF(R,W),MF(R',W')) \simeq MF(R\otimes_k R', -W\otimes 1+ 1\otimes W').
 \end{equation}
\end{theorem}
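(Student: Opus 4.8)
This statement is the two-periodic ($\Z/2$-graded, $k[u,u^{-1}]$-linear) incarnation of To\"en's computation of the internal Hom of dg categories, so the plan is to transport To\"en's derived Morita theory to this setting and then carry out an explicit Koszul-theoretic identification of the resulting algebra. Throughout I will use that $MF(R,W)$ is a $\Z/2$-graded dg category which, after passage to its triangulated--idempotent (Morita) completion $\widehat{MF}(R,W)$, is equivalent to the dg category $D^{\mathrm{perf}}(A)$ of perfect dg modules over the endomorphism dg algebra $A$ of any compact generator.

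\emph{Step 1: a two-periodic Morita theorem.} Following \cite{To}, I would establish that for $\Z/2$-graded dg categories $\CA,\CB$ the continuous functors $\widehat{\CA}\to\widehat{\CB}$ are computed by $\CA$-$\CB$-bimodules, and that when $\CA$ is \emph{smooth and proper} one may restrict to perfect bimodules, so that
\[ R\underline{Hom}_c(\widehat{\CA},\widehat{\CB})\;\simeq\;\widehat{\CA^{\mathrm{op}}\otimes_k\CB}. \]
Concretely this amounts to checking that the Morita model structure, the tensor--Hom adjunction for bimodules, and the characterization of dualizable (compact) objects all survive when the ground field $k$ is replaced by $k[u,u^{-1}]$; this is precisely the step that "modifies the results of \cite{To}" and is where the homotopical content lies.

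\emph{Steps 2--3: the compact generator and the algebra.} Next I invoke Dyckerhoff's theorem that the stabilized residue field $k^{\mathrm{stab}}$ --- the Koszul matrix factorization of $(x_1,\dots,x_n)$ --- compactly generates $\widehat{MF}(R,W)$, with endomorphism dg algebra an explicit Koszul/Clifford-type dg algebra $A_W$, so $\widehat{MF}(R,W)\simeq D^{\mathrm{perf}}(A_W)$ and likewise for $W'$ and for $(R\otimes_k R',\,-W\otimes 1+1\otimes W')$. The geometric hypothesis on $W$ (a critical point at the origin, equivalently finite length of the Jacobian ring) enters here and in Step 1: it is exactly what makes $MF(R,W)$ proper (morphism complexes have finite-dimensional cohomology) and smooth ($A_W$ perfect over $A_W\otimes_k A_W^{\mathrm{op}}$), so that Step 1 applies and gives $R\underline{Hom}_c(MF(R,W),MF(R',W'))\simeq D^{\mathrm{perf}}(A_W^{\mathrm{op}}\otimes_k A_{W'})$. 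For the algebra: the opposite of $MF(R,W)$ is $MF(R,-W)$ (the $R$-linear dual, taken with the Koszul sign, negates the potential), so $A_W^{\mathrm{op}}\simeq A_{-W}$; and the external product $k^{\mathrm{stab}}_R\boxtimes k^{\mathrm{stab}}_{R'}$ is the stabilized residue field of $R\otimes_k R'$ for the potential $-W\otimes 1+1\otimes W'$, with endomorphism dg algebra $A_{-W}\otimes_k A_{W'}$ as $\Z/2$-graded dg algebras (a Künneth for Koszul matrix factorizations: the Clifford algebras tensor and the Koszul differentials add). Hence $A_W^{\mathrm{op}}\otimes_k A_{W'}\simeq A_{-W\otimes 1+1\otimes W'}$, which compactly generates $\widehat{MF}(R\otimes_k R',\,-W\otimes 1+1\otimes W')$, and threading the equivalences together yields the claim; finally I would check that the external tensor product $MF(R,-W)\otimes_k MF(R',W')\to MF(R\otimes_k R',\,-W\otimes 1+1\otimes W')$ induces this equivalence, so no extra completion is needed on the matrix-factorization side.

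\emph{Main obstacle.} The serious work is Step 1: reproving To\"en's bimodule description of continuous functors, together with its perfect refinement, in the two-periodic setting, and being careful about replacing $MF(R,W)$ by a genuinely Morita-complete model. Steps 2--3 are essentially bookkeeping with Koszul complexes and Clifford algebras once this framework is in place; the one place genuine singularity-theoretic input is unavoidable is in verifying the smoothness and properness of $MF(R,W)$ (and of the category attached to the product datum) from the isolated-singularity assumption, which is what licenses the reduction from all bimodules to perfect ones.
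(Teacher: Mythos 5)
This theorem is not proved in the paper: it is quoted verbatim from Dyckerhoff \cite{Dyc} (obtained by adapting To\"en's derived Morita theory to $\Z/2$-graded dg categories), so there is no internal proof to compare against. Your outline is a faithful reconstruction of Dyckerhoff's actual argument --- To\"en-style bimodule description of continuous functors, compact generation by the stabilized residue field $k^{\mathrm{stab}}$ with its Clifford-type endomorphism dg algebra, the identification $MF(R,W)^{\mathrm{op}}\simeq MF(R,-W)$, and the Thom--Sebastiani/K\"unneth step for the external product of Koszul factorizations. One small correction: To\"en's equivalence $R\underline{Hom}_c(\widehat{\CA},\widehat{\CB})\simeq\widehat{\CA^{\mathrm{op}}\otimes_k\CB}$ holds with no smoothness or properness hypotheses; those hypotheses (supplied by the isolated-singularity assumption) are needed only at the later stage where you pass to compact objects on both sides and identify them with finite-rank matrix factorizations, and you should also flag that one must check idempotent/Morita completeness of $MF$ itself so that no extra completion appears in the final statement.
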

 \begin{definition}
For $\CF \in  R\underline{Hom}_c(MF(R,W),MF(R',W'))$, 
 the corresponding matrix factorization under the quasi-equivalence \eqref{eq:ker} is called a {\em kernel} for $\CF$.
\end{definition}

We focus on a specific type of matrix factorizations. It will be used to describe kernels for identity functors of $MF(W)$ and $MF_G(W)$. 
\begin{definition}
Let $W=\sum_{i=1}^m a_i b_i$ for $a_i,b_i \in R$. Let
\[\vec{a}:=(a_1,\cdots,a_m),\vec{b}:=(b_1,\cdots,b_m).\] 
Then the following matrix factorization
\begin{equation}\label{eqK}
\big( R\otimes k[\theta_1,\cdots,\theta_m],\sum_{i=1}^m (a_i \theta_i+b_i \partial_{i}) \big)
\end{equation}
is called the {\em Koszul matrix factorization} of $W$ with respect to $(\vec{a}\mid\vec{b})$.
\end{definition}
Let 
\[ W(y)-W(x):= W(y_1,\cdots,y_n)-W(x_1,\cdots,x_n) \in S.\]
Denote $\nabla_i W:=\nabla_i^{x\to (x,y)}W$ for simplicity. We have
\[ W(y)-W(x)=\sum_{i=1}^n (y_i-x_i)\nabla_i W,\]
so we can construct a Koszul matrix factorization $\Delta_W$ of $W(y)-W(x)$ as follows.
\[ \Delta_W=\Big(S[\theta_1,\cdots,\theta_n], \sum_{i=1}^n \big((y_i-x_i)\theta_i+\nabla_i W(x,y) \cdot\partial_{i}\big)\Big).\]
It is easy to observe that $\Delta_W$ is a kernel for $\mathrm{Id}_{MF(W)}$, because it is given by a resolution of the diagonal ideal $R\subset S$. For the following result see \cite{Dyc} for example.

\begin{theorem}
We have algebra isomorphisms
\[  HH^*(MF(W)) \cong \Hom_{MF(W(y)-W(x))}(\Delta_W,\Delta_W) \cong \Jac(W).\]
\end{theorem}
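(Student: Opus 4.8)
The plan is to prove the two isomorphisms in turn. The first, $HH^*(MF(W)) \cong \Hom_{MF(W(y)-W(x))}(\Delta_W,\Delta_W)$, is purely formal: specializing the quasi-equivalence \eqref{eq:ker} to $R' = R$, $W' = W$ gives $R\underline{Hom}_c(MF(W),MF(W))\simeq MF(W(y)-W(x))$, and under it $\mathrm{Id}_{MF(W)}$ corresponds to $\Delta_W$, which is exactly the statement recalled above that $\Delta_W$ resolves the diagonal. Since a quasi-equivalence of dg categories induces an isomorphism on the cohomology of endomorphism complexes compatible with composition, applying it to the pair $(\mathrm{Id}_{MF(W)},\Delta_W)$ yields the desired algebra isomorphism. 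Nothing delicate happens here.

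The content is in $\Hom_{MF(W(y)-W(x))}(\Delta_W,\Delta_W)\cong\Jac(W)$, which I would prove by an explicit homological computation. First I would make the endomorphism dg algebra of $\Delta_W$ fully explicit: since the underlying module of $\Delta_W$ is $S[\theta_1,\ldots,\theta_n]$, left multiplication identifies $hom_{MF(W(y)-W(x))}(\Delta_W,\Delta_W)$ with $S\otimes_k\mathrm{Cl}_n$, the composition product becoming the associative product there. A short computation from the Clifford relations \eqref{eq:cliffrel} shows that the differential $D\phi = \delta\phi-(-1)^{|\phi|}\phi\delta$ (with $\delta$ the differential of $\Delta_W$) becomes the super-commutator $[\delta,\,\cdot\,]$, the unique $S$-linear super-derivation of $S\otimes_k\mathrm{Cl}_n$ with $\theta_j\mapsto\nabla_j W(x,y)$ and $\partial_j\mapsto y_j-x_j$ (and $S$ in its kernel).

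Next I would filter $S\otimes_k\mathrm{Cl}_n$ by the number of $\theta$-factors. Because $D\partial_j = y_j-x_j$ preserves this number while $D\theta_j = \nabla_j W$ lowers it by one, the associated spectral sequence has exactly two nonzero differentials. The first is the Koszul differential of $(y_1-x_1,\ldots,y_n-x_n)$ in the $\partial$-variables; this being a regular sequence in $S$, it collapses the $\partial$-direction and leaves $R\otimes_k k[\theta_1,\ldots,\theta_n]$, identifying $S/(y_i-x_i)S$ with $R=k[x_1,\ldots,x_n]$. The second differential sends $\theta_j$ to $\nabla_j W|_{y=x} = \partial W/\partial x_j$, so it is the Koszul differential of $(\partial_1 W,\ldots,\partial_n W)$ in $R$; since $W$ has an isolated critical point this sequence is regular too, with homology $R/(\partial_1 W,\ldots,\partial_n W) = \Jac(W)$ concentrated in a single bidegree. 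Hence the spectral sequence degenerates and $\Hom_{MF(W(y)-W(x))}(\Delta_W,\Delta_W)\cong\Jac(W)$ as $k$-modules.

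Finally, to promote this to an algebra isomorphism I would write the map down: for $f\in R$ the endomorphism $f(x)\cdot\id_{\Delta_W}$ is closed (as $f(x)$ is central), and $f\mapsto[f(x)\cdot\id_{\Delta_W}]$ descends to a $k$-algebra homomorphism $\Jac(W)\to\Hom(\Delta_W,\Delta_W)$: if $f = \sum_i g_i\,\partial_i W$ then $f(x)\cdot\id_{\Delta_W}$ differs from $D\big(\sum_i g_i\theta_i\big)$ by an exact term, using $\nabla_i W - \partial_i W(x)\in(y_j-x_j)S$ and $(y_j-x_j)\cdot\id_{\Delta_W} = D\partial_j$. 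This homomorphism lies in the bottom of both filtrations, so the spectral sequence identifies it with the identity of $\Jac(W)$ on the $E_\infty$-page and therefore it is an isomorphism; composing with the first isomorphism recovers the standard $\Jac(W)$-structure on $HH^*(MF(W))$. I expect the one genuinely fiddly step to be the bookkeeping in the spectral sequence — checking that the noncommutativity of $\mathrm{Cl}_n$ (in particular $\partial_i\theta_i = -\theta_i\partial_i+1$) and the Koszul signs really do leave the two successive pages as honest Koszul complexes; this is the computation already in \cite{Dyc}, so no new idea should be required.
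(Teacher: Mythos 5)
Your proposal is correct, and it is essentially the argument the paper relies on: the theorem is stated here without proof and attributed to \cite{Dyc}, and your two-step computation (formal identification via the kernel quasi-equivalence \eqref{eq:ker}, then a two-page spectral sequence whose successive differentials are the Koszul complexes of the regular sequences $(y_i-x_i)$ and $(\partial_i W)$) is exactly the standard Dyckerhoff-style computation, which the paper itself reproduces in the twisted generality of Lemma \ref{lemma:homjac}. The only organizational difference is that in Lemma \ref{lemma:homjac} the paper first shrinks $hom(\Delta_1,\Delta_h)$ to the complex $(R[\theta_1,\dots,\theta_n],d(x,hx))$ by replacing $\Delta_h$ with the shifted MCM module it resolves (Dyckerhoff's Lemma 4.2) before filtering, whereas you filter the full endomorphism dg algebra $S\otimes_k\mathrm{Cl}_n$ directly by $\theta$-degree; both reduce to the same pair of Koszul complexes, and your explicit cocycle $f(x)\cdot\id_{\Delta_W}$ correctly pins down the algebra structure. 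No gaps.
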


We turn our attention to $G$-equivariant matrix factorizations. Let $G$ act on $k[x_1,\cdots,x_n]$ diagonally, which means that $h\cdot x_i=h_i x_i$ for $h_i \in k^*$. We define a $G$-action on $k[\theta_1,\cdots,\theta_n,\partial_{\theta_1},\cdots,\partial_{\theta_n}]$ and distinguish it by following notations.
\[ \rho(h)\theta_i:=h_i^{-1} \theta_i, \;\; \rho(h)\partial_{\theta_i}:=h_i \partial_{\theta_i}\]
Since the action is diagonal, $\Delta_W$ is automatically $G$-equivariant.

\begin{theorem}[\cite{PV}]
The following is a $(G\times G)$-equivariant matrix factorization of $W(y)-W(x)$ and is a kernel for ${\rm Id}_{MF_G(W)}$:
\[ \Delta_W^{G\times G}:=\bigoplus_{g\in G}\Big( S[\theta_1,\cdots,\theta_n],\sum_{i=1}^n \big((y_i-gx_i)\theta_i+\nabla_i W(gx,y) \cdot\partial_{i}\big)\Big),\]
consequently we have an isomorphism of algebras:
\[ HH^*(MF_G(W)) \cong \Hom_{MF_{G\times G}(W(y)-W(x))}(\Delta_W^{G\times G},\Delta_W^{G\times G}).\]
\end{theorem}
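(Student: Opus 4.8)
The plan is to reduce the statement to the non-equivariant facts already recalled---that $\Delta_W$ is a kernel for $\mathrm{Id}_{MF(W)}$ and that \eqref{eq:ker} holds---together with an equivariant upgrade of Dyckerhoff's theorem, leaving as the only substantial point the identification of the object of $MF_{G\times G}(W(y)-W(x))$ that represents $\mathrm{Id}_{MF_G(W)}$. First I would check that $\Delta_W^{G\times G}$ is a legitimate object of $MF_{G\times G}(W(y)-W(x))$. On each summand $\Delta_{W,g}=\big(S[\theta_1,\dots,\theta_n],\,d_g\big)$, $d_g=\sum_i\big((y_i-gx_i)\theta_i+\nabla_iW(gx,y)\,\partial_i\big)$, the Clifford relations \eqref{eq:cliffrel} give at once
\[ d_g^2=\sum_{i=1}^n (y_i-gx_i)\,\nabla_iW(gx,y)\cdot\id=\big(W(y)-W(gx)\big)\cdot\id=\big(W(y)-W(x)\big)\cdot\id, \]
the last equality being the $G$-invariance of $W$; thus $\Delta_{W,g}$ is a Koszul matrix factorization of $W(y)-W(x)$ resolving the graph ideal $(y_i-gx_i:1\le i\le n)$. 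The $(G\times G)$-structure is then defined so that $(a,b)\in G\times G$ acts on $S$ diagonally in the $x$- and $y$-variables, carries the $g$-summand isomorphically onto the $(agb^{-1})$-summand, and acts on $\theta_1,\dots,\theta_n$ by the characters that---exactly as for $\Delta_W$, which is automatically equivariant for the diagonal action---intertwine each $d_g$; equivariance of $d=\bigoplus_g d_g$ is then a direct check.

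Next I would invoke the $G$-equivariant analogue of \eqref{eq:ker}, i.e.\ the quasi-equivalence $R\underline{Hom}_c(MF_G(W),MF_G(W))\simeq MF_{G\times G}(W(y)-W(x))$, obtained by running the same derived-Morita argument over the smooth quotient stack $[{\rm Spec}\,R/G]$; this is available in \cite{PV,BFK} and I would cite it. It then remains to see that $\mathrm{Id}_{MF_G(W)}$ corresponds to $\Delta_W^{G\times G}$. Given $(P,\rho,d_P)\in MF_G(W)$, convolution with $\Delta_W^{G\times G}$ is computed summand by summand: since $\Delta_{W,g}$ resolves the graph $\{y=gx\}$, tensoring $P$ over the $x$-variables with $\Delta_{W,g}$ and restricting to the graph returns a copy of $P$ expressed in the $y$-variables via $x_i=g_i^{-1}y_i$, i.e.\ the image of $P$ under the autoequivalence of $MF(W)$ induced by the $g$-action (still a matrix factorization of $W(y)$ because $W$ is $G$-invariant). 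Summing over $g\in G$ and passing to invariants for the residual $x$-side $G$-action---which permutes the summands $\{\Delta_{W,g}\}_{g\in G}$ simply transitively, invariants and coinvariants agreeing since $\mathrm{char}\,k=0$---collapses the direct sum onto a single copy carrying precisely the original $G$-equivariant structure of $P$, now in the $y$-variables. Hence the convolution functor with kernel $\Delta_W^{G\times G}$ is naturally isomorphic to $\mathrm{Id}_{MF_G(W)}$, so $\Delta_W^{G\times G}$ is a kernel for the identity functor.

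The algebra isomorphism is then formal. By definition $C^*(MF_G(W),MF_G(W))=hom_{R\underline{Hom}_c(MF_G(W),MF_G(W))}(\mathrm{Id},\mathrm{Id})$; transporting along the quasi-equivalence of the previous paragraph together with the identification $\mathrm{Id}\leftrightarrow\Delta_W^{G\times G}$ makes this quasi-isomorphic, as a dg algebra, to $hom_{MF_{G\times G}(W(y)-W(x))}(\Delta_W^{G\times G},\Delta_W^{G\times G})$ with its composition product. Passing to cohomology yields the claimed isomorphism $HH^*(MF_G(W))\cong\Hom_{MF_{G\times G}(W(y)-W(x))}(\Delta_W^{G\times G},\Delta_W^{G\times G})$, and it is an isomorphism of algebras because the cup product on $HH^*$ is, by construction, the composition of endomorphisms of $\mathrm{Id}$, matched with the composition of endomorphisms of the kernel. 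I expect the middle step to be the main obstacle: fixing the $(G\times G)$-equivariant structure---in particular the character twists on the $\theta_i$ and the precise permutation of the summands---so that $d$ is genuinely equivariant, and then carrying out the equivariant convolution bookkeeping (which $G$ is contracted, invariants versus coinvariants, and the Koszul/Clifford signs). Establishing the equivariant Dyckerhoff equivalence with full rigor is the other technical input, but it may be taken off the shelf from \cite{PV,BFK}.
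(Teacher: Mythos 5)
This theorem is quoted from \cite{PV} and the paper supplies no proof of its own, so there is nothing internal to compare against; your sketch reconstructs the standard argument (the Koszul computation of $d_g^2$ using $G$-invariance of $W$, the twisted-diagonal $(G\times G)$-structure, the equivariant version of Dyckerhoff's equivalence, and the convolution-plus-invariants computation identifying the kernel of the identity functor), which is exactly the route taken in \cite{PV} and \cite{BFK}. The outline is correct, and the points you flag as remaining bookkeeping are indeed routine --- in particular the equivariance of the differential works out because $G$ is abelian, so that $h_2^{-1}gh_1=h_1gh_2^{-1}$ and the summand indexing is consistent, and invariants agree with coinvariants for the contracted copy of $G$ since $\mathrm{char}\,k=0$.
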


As we saw in the introduction, we have
\[ HH^*(MF_G(W)) \cong \Jac(W,G).\]
To recover $\Jac'(W,G)$ from matrix factorizations, we consider a category $MF_{1\times G}(W(y)-W(x)).$ We denote each summand of $\Delta_W^{G\times G}$ by following.
\[\Delta_g:=\Big( S[\theta_1,\cdots,\theta_n],\sum\limits_{i=1}^n \big((y_i-gx_i)\theta_i+\nabla_i W(gx,y) \cdot\partial_{i}\big)\Big).\]  
Let $r(x,y)\in S, v\in k[\theta_1,\cdots,\theta_n]$ so that $r(x,y)v\in S[\theta_1,\cdots,\theta_n]$.
Then $(G\times G)$-action on $\Delta_W^{G\times G}$ is given by
\begin{equation}\label{def:ggaction}
 (h_1 \times h_2)\cdot (r(x,y)v):=r(h_1 x,h_2 y)\rho(h_2)v \in \Delta_{h_1 g h_2^{-1}}.
 \end{equation}
\begin{notation}
Let $v=\sum\limits_I c_I \theta_I \in k[\theta_1,\cdots,\theta_n]$. We specify the element $v\in \Delta_g$ by $v_g$. With this notation, \eqref{def:ggaction} can be written as
\[  (h_1 \times h_2)\cdot (r(x,y)v_g)=r(h_1 x,h_2 y)\rho(h_2)v_{h_1 g h_2^{-1}}.\]
\end{notation}


\begin{lemma}\label{lem:1gequivendo}
A $(1\times G)$-equivariant endomorphism of $\Delta_W^{G\times G}$ is completely determined by its restriction to $\Delta_1$.
\end{lemma}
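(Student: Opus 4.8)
The plan is to exploit that the $(1\times G)$-action permutes the direct summands $\{\Delta_g\}_{g\in G}$ of $\Delta_W^{G\times G}$ simply transitively, with $\Delta_1$ having trivial stabilizer, so that every summand is an isomorphic copy of $\Delta_1$ transported by a unique element of $1\times G$. Concretely, I would first read off the action of $1\times h$ on summands from \eqref{def:ggaction}: since $(1\times h)\cdot(r(x,y)v_g)=r(x,hy)\,\rho(h)v_{gh^{-1}}$, the operator $1\times h$ carries $\Delta_g$ onto $\Delta_{gh^{-1}}$. Writing $\tau_g$ for the action of $1\times g^{-1}$ on $\Delta_W^{G\times G}$, the map $\tau_g$ is a $k$-linear automorphism of $\Delta_W^{G\times G}$ with inverse the action of $1\times g$, and it sends $\Delta_h$ isomorphically onto $\Delta_{hg}$; in particular $\tau_g$ restricts to an isomorphism $\Delta_1\xrightarrow{\ \sim\ }\Delta_g$, with $\tau_g^{-1}$ restricting to its inverse $\Delta_g\xrightarrow{\ \sim\ }\Delta_1$.

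Next, given a $(1\times G)$-equivariant endomorphism $\phi$ and an element $w\in\Delta_g$, I would set $u:=\tau_g^{-1}(w)\in\Delta_1$, so $w=\tau_g(u)$. Equivariance of $\phi$ under $1\times g^{-1}$ then gives
\[ \phi(w)=\phi\big(\tau_g(u)\big)=\tau_g\big(\phi(u)\big)=\tau_g\big(\phi|_{\Delta_1}(u)\big), \]
whence $\phi|_{\Delta_g}=\tau_g\circ\phi|_{\Delta_1}\circ\tau_g^{-1}|_{\Delta_g}$ for every $g\in G$. Since the $(1\times G)$-action is fixed ambient data, this exhibits $\phi$ on all of $\Delta_W^{G\times G}$ in terms of $\phi|_{\Delta_1}$; equivalently, two $(1\times G)$-equivariant endomorphisms that agree on $\Delta_1$ must agree on every $\Delta_g$, hence coincide.

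I do not expect a genuinely hard step here: the whole content is the simple transitivity of the $(1\times G)$-action on the summands, and the only point demanding care is the bookkeeping of group elements — verifying from \eqref{def:ggaction} that $1\times h$ moves $\Delta_g$ to $\Delta_{gh^{-1}}$ rather than, say, to $\Delta_{hg}$. The substantive companion question — which $S$-linear maps $\Delta_1\to\Delta_W^{G\times G}$ actually arise as $\phi|_{\Delta_1}$ for a \emph{closed} $(1\times G)$-equivariant $\phi$ — is precisely what the classification in Section~\ref{sec:classification} addresses, and is not needed for the present lemma.
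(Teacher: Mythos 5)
Your proposal is correct and is essentially the paper's own argument: the paper likewise applies $(1\times h)$ to move an element of $\Delta_h$ into $\Delta_1$, uses equivariance, and then applies $(1\times h^{-1})$ to recover $\phi$ on $\Delta_h$, which is exactly your conjugation formula $\phi|_{\Delta_g}=\tau_g\circ\phi|_{\Delta_1}\circ\tau_g^{-1}|_{\Delta_g}$ written out on elements $r(x,y)v_h$. Your bookkeeping of the action (that $1\times h$ carries $\Delta_g$ to $\Delta_{gh^{-1}}$) matches \eqref{def:ggaction}, so there is nothing to correct.
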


\begin{proof}
If $\phi$ is a $(1\times G)$-equivariant endormorphism, then we have
\[ (1\times h)\cdot \phi\big(r(x,y)v_h\big) = \phi\big( (1\times h) \cdot (r(x,y)v_h)\big) = \phi\big( r(x,hy)\rho(h)v_1\big).\]
Taking $(1\times h^{-1})$-action on both sides, we get
\begin{equation}\label{eq:translation}
 \phi\big(r(x,y)v_h\big) = (1\times h^{-1}) \cdot \big(r(x,hy)\phi(\rho(h)v_1)\big)=r(x,y) \cdot (1\times h^{-1})\cdot\big(\phi(\rho(h)v_1)\big).
 \end{equation}
The right hand side is now completely determined by $\phi|_{\Delta_1}$.
\end{proof}

\begin{corollary}
We have a quasi-isomorphism
\[ hom_{MF_{1\times G}(W(y)-W(x))}(\Delta_W^{G\times G},\Delta_W^{G\times G}) \simeq \bigoplus_{g\in G}hom_{MF(W(y)-W(x))}(\Delta_1,\Delta_g). \]
\end{corollary}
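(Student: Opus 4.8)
The plan is to promote the module-level bijection of Lemma \ref{lem:1gequivendo} to a \emph{chain-level} isomorphism of complexes and then read off the quasi-isomorphism. First I would define the restriction map
\[
\mathrm{res}\colon hom_{MF_{1\times G}(W(y)-W(x))}(\Delta_W^{G\times G},\Delta_W^{G\times G}) \longrightarrow \bigoplus_{g\in G}hom_{MF(W(y)-W(x))}(\Delta_1,\Delta_g),
\qquad
\phi \longmapsto (\phi|_{\Delta_1}\to\Delta_g)_{g\in G},
\]
where the $g$-component is the composition of $\phi|_{\Delta_1}$ with the projection $\Delta_W^{G\times G}\twoheadrightarrow\Delta_g$. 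Lemma \ref{lem:1gequivendo} already shows this map is injective; for surjectivity I would use formula \eqref{eq:translation} as an explicit reconstruction recipe: given an arbitrary tuple $(\psi_g)_{g\in G}$ with $\psi_g\in hom(\Delta_1,\Delta_g)$, I would define $\phi$ on $\Delta_h$ by $\phi(r(x,y)v_h):=r(x,y)\cdot(1\times h^{-1})\cdot\big((\sum_g\psi_g)(\rho(h)v_1)\big)$ and check that the resulting $\phi$ is $R$-linear, well defined, and $(1\times G)$-equivariant. The equivariance check is a direct computation: applying $(1\times h')$ to $\phi(r(x,y)v_h)$ and comparing with $\phi$ applied to $(1\times h')\cdot(r(x,y)v_h)=r(x,h'y)\rho(h')v_{hh'^{-1}}$ reduces, after using that $(1\times G)$ acts trivially on the $x$-variables and diagonally via $\rho$ on the $\theta$'s, to the cocycle-type identity $(1\times h'')\circ(1\times h') = (1\times h''h')$, which holds because $\rho$ is a representation. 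So $\mathrm{res}$ is a bijection of graded $k$-modules.

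Next I would verify that $\mathrm{res}$ is a map of cochain complexes, i.e. that it intertwines the differential $D\phi = d\circ\phi+(-1)^{|\phi|+1}\phi\circ d$ on the left with the differential on $\bigoplus_g hom(\Delta_1,\Delta_g)$ induced by the matrix factorization differentials of $\Delta_1$ and $\Delta_g$. This is essentially automatic: the differential on $\Delta_W^{G\times G}$ is block-diagonal with respect to the decomposition $\bigoplus_g\Delta_g$ (each block being the differential of $\Delta_g$), so restricting an endomorphism to $\Delta_1$ and projecting to $\Delta_g$ commutes with pre- and post-composition by these block differentials. Hence $\mathrm{res}(D\phi)=D(\mathrm{res}\,\phi)$, and since it is moreover a bijection it is an isomorphism of complexes, in particular a quasi-isomorphism — which is the claimed statement, and is even slightly stronger.

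I expect the main obstacle to be purely bookkeeping rather than conceptual: keeping track of which summand $\Delta_{h_1 g h_2^{-1}}$ one lands in under the $(G\times G)$-action \eqref{def:ggaction}, and making sure the reconstructed $\phi$ is genuinely an endomorphism of the \emph{whole} $\Delta_W^{G\times G}$ (not just a collection of maps out of $\Delta_1$) before one can even speak of its $(1\times G)$-equivariance — one must extend $\psi=\sum_g\psi_g\colon\Delta_1\to\Delta_W^{G\times G}$ to all blocks $\Delta_h$ by the translation formula and then check consistency. A secondary point worth a careful line is signs: the differential $D$ carries a sign depending on $|\phi|$, so one should confirm that the identification of complexes respects the $\Z/2$-grading, which it does because restriction and projection both preserve the grading of $\theta$-monomials. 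None of these requires new ideas beyond Lemma \ref{lem:1gequivendo}; the corollary is really just the observation that the bijection there is natural in the chain-complex structure.
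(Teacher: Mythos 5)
Your proposal is correct and is essentially the argument the paper intends: the corollary is stated as an immediate consequence of Lemma \ref{lem:1gequivendo}, and your restriction map together with the reconstruction recipe \eqref{eq:translation} is exactly the natural chain-level isomorphism (block-diagonality of the differential of $\Delta_W^{G\times G}$ making it a map of complexes), which in particular gives the claimed quasi-isomorphism.
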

This corollary is really useful as the following lemma indicates.
\begin{lemma} \label{lemma:homjac}
$\Hom_{MF(W(y)-W(x))}(\Delta_1,\Delta_h)$ is a rank $1$ free module over $\Jac(W^h)$.
\end{lemma}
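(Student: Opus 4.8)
The plan is to compute the morphism space directly from the Koszul presentation of $\Delta_1$ and $\Delta_h$, reducing the question to linear algebra over $S$ followed by passage to the Jacobian ring. First I would note that both $\Delta_1$ and $\Delta_h$ have underlying module $S[\theta_1,\dots,\theta_n]$, so
\[ hom_S(\Delta_1,\Delta_h) \cong S[\theta_1,\dots,\theta_n] \otimes_S \Lambda^\bullet(S^n)^* \cong \mathrm{Cl}_n \otimes_k S \]
as a $\Z/2$-graded $S$-module, with differential $D\phi = d_h\circ\phi - (-1)^{|\phi|}\phi\circ d_1$ where $d_1 = \sum_i\big((y_i-x_i)\theta_i + \nabla_i W(x,y)\partial_i\big)$ and $d_h = \sum_i\big((y_i-hx_i)\theta_i+\nabla_i W(hx,y)\partial_i\big)$. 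The strategy is then to identify $\Hom_{MF}(\Delta_1,\Delta_h) = H^0(hom_S(\Delta_1,\Delta_h),D)$ and show this cohomology is free of rank one over $\Jac(W^h)$.

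The key computational step is to recognize a change of basis that makes $\Delta_h$ look like a tensor product of a Koszul complex in the "moving" variables $I_h$ and a diagonal-type factorization in the "fixed" variables $I^h$. For $i\in I^h$ we have $h x_i = x_i$, so the $i$-th summand of $d_h$ is exactly $(y_i-x_i)\theta_i + \nabla_i W(hx,y)\partial_i$, matching the diagonal factorization of $W^h(y)-W^h(x)$ after the substitution $x_i=0$ for $i\in I_h$; for $i\in I_h$ the element $y_i - h x_i$ together with $\nabla_i W(hx,y)$ forms a regular sequence cutting out a point-like locus in those variables (since $W$ has an isolated singularity, and $1-h_i$ is invertible). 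Thus $\Delta_h$, as a complex of $S$-modules, is quasi-isomorphic to a Koszul-type resolution whose cohomology is concentrated in a single degree and equals (after the identifications) $\Jac(W^h)$ placed in the correct $\Z/2$-degree $d_h \bmod 2$, carried by the generator $\theta_{I_h}$. The analogous but simpler statement for $\Delta_1$ gives that $\Delta_1$ resolves the diagonal, so $hom(\Delta_1,-)$ applied to a matrix factorization computes, up to homotopy, "evaluation along the diagonal." Combining these, $\Hom_{MF}(\Delta_1,\Delta_h)$ becomes the cohomology of the Koszul complex on the sequence $\{y_i - h x_i, \nabla_i W(hx,y) : i \in I_h\} \cup \{y_i - x_i : i\in I^h\}$ inside $S$, which is a free rank-one $\Jac(W^h)$-module generated by the class of $\theta_{I_h}$ (or equivalently $\partial_{I^h}$, depending on normalization).

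Concretely, I would proceed as follows: (i) restrict to the subcomplex where $x_i = 0$ for $i\in I_h$ is NOT yet imposed, and instead filter $hom_S(\Delta_1,\Delta_h)$ by the Koszul degree in the $\theta_i, \partial_i$ for $i\in I_h$, running the argument on the leading term of the differential which is $\sum_{i\in I_h}\big((y_i - h_i x_i)\theta_i + \cdots\big)$; (ii) observe that $\{y_i - h_i x_i\}_{i\in I_h}$ together with $\{\nabla_i W(hx,y)\}_{i\in I_h}$ forms a regular sequence, so this partial Koszul factor is exact except in the extreme degree, contributing exactly the generator $\theta_{I_h}$ up to multiplication by $S/(\text{that regular sequence})$; (iii) on the remaining fixed variables, recognize the residual complex as $\Delta_{W^h}$ for the restricted potential $W^h \in k[x_i : i\in I^h]$, whose endomorphism-with-$\Delta_1^{W^h}$ cohomology is $\Jac(W^h)$ by the non-equivariant diagonal computation quoted earlier in the excerpt; (iv) assemble to conclude freeness of rank one over $\Jac(W^h)$.

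The main obstacle I anticipate is step (ii)–(iii): carefully checking that the mixed Koszul complex genuinely splits as a tensor product (the cross terms between $\nabla_i W(hx,y)$ for $i\in I_h$ and $i\in I^h$ do not obviously vanish), and that the surviving cohomology is exactly $\Jac(W^h)$ with no extra factors of $k$ coming from the point-like locus in the moving variables — i.e. that $S/\big(\{y_i - h_i x_i\}_{i\in I_h}, \{\nabla_i W(hx,y)\}_{i\in I_h}\big)$, after also killing $y_i - x_i$ for $i\in I^h$, is precisely $\Jac(W^h)$ and not a nontrivial extension of it. This is where the hypothesis that $W$ has an isolated singularity and that the $h_i$ with $i\in I_h$ satisfy $h_i\neq 1$ (so $y_i - h_i x_i$ and $y_i - x_i$ together force $x_i = y_i = 0$) is essential, and I would spell out that the two substitutions are compatible. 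Once the regular-sequence bookkeeping is in place, the rank-one freeness and the identification of the $\Z/2$-degree as $d_h \bmod 2$ are formal.
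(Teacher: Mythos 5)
Your overall strategy (filter the Hom-complex so that the moving variables $I_h$ contribute an exact Koszul factor carrying $\theta_{I_h}$, while the fixed variables contribute a copy of $\Jac(W^h)$) is the same spectral-sequence idea the paper uses, but there are two problems. First, the explicit sequence you write at the end of your second paragraph is not the right one: killing $\{y_i-h_ix_i\}_{i\in I_h}$ and $\{y_i-x_i\}_{i\in I^h}$ identifies $y$ with $hx$, and then $\nabla_i W(hx,y)|_{y=hx}=h_i^{-1}\partial_{x_i}W$, so the quotient of $S$ by the sequence as written is $R/(\partial_{x_i}W : i\in I_h)$ --- generally infinite-dimensional and not $\Jac(W^h)$ (try $W=x_1^3+x_2^3$, $h=(\zeta_3,1)$, $I_h=\{1\}$). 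Missing from your list are the relations $x_i=0$ for $i\in I_h$ (which come from the horizontal Koszul differential $\sum_i(h_ix_i-x_i)\theta_i$, whose nonzero entries are units times $x_i$) and the partials $\partial_{x_i}W^h$ for $i\in I^h$. Your step (iii) is meant to supply the latter, so the two halves of your write-up are not consistent with each other.

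Second --- and this is the genuine gap, which you correctly flag but do not close --- the ``residual complex in the fixed variables'' is not literally $\Delta_{W^h}$, because for $i\in I^h$ the entries $\nabla_iW(hx,y)$ still involve the moving variables, so the claimed tensor-product splitting does not hold on the nose. The paper handles this by first reducing the whole Hom-complex, via Dyckerhoff's Lemma 4.2 and self-duality of Koszul matrix factorizations, to the single complex $\big(R[\theta_1,\dots,\theta_n],\,d(x,hx)\big)$ with $y=hx$ already substituted (a much smaller object than your $\mathrm{Cl}_n\otimes_k S$), and then, after taking $d_{hor}$-cohomology on the $E_1$-page (which imposes $x_i=0$ for $i\in I_h$), invoking Shklyarov's Lemma 4.9 to replace the induced vertical differential by $\sum_{i\in I^h}(\partial_{x_i}W^h)\partial_{\theta_i}$. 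That lemma is precisely the statement that your cross terms die on $E_1$; without it, or an equivalent explicit homotopy, the splitting is unproven. Note also that the regularity you worry about in (ii) is not needed on the paper's route: after the $y=hx$ substitution the horizontal complex is Koszul on the manifestly regular sequence $\{x_i\}_{i\in I_h}$ in $R$, and the vertical one is Koszul on $\{\partial_{x_i}W^h\}_{i\in I^h}$ in $R^h$, regular by the isolated-singularity hypothesis. I would restructure your argument along those lines.
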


\begin{proof}
Recall that for $h\in G$,
\[\Delta_h=\big( S[\theta_1,\cdots,\theta_n],d(hx,y)\big)\] 
where
\[d(hx,y)=\sum_{i=1}^n \big( (y_i-hx_i)\theta_i + \nabla_i W(hx,y) \partial_i\big).\]
$\Delta_h$ is constructed by the resolution of a shifted MCM module $S/(y_1-hx_1,\cdots,y_n-hx_n)[-n]$ over the hypersurface ring $S/(W(y)-W(x))$. Hence we have
\begin{align}
hom_{MF(W(y)-W(x))}(\Delta_1,\Delta_h) & \simeq hom_S\big(\Delta_1,S/(y_1-hx_1,\cdots,y_n-hx_n)[-n] \big)\nonumber\\
&\simeq hom_S\big(\Delta_1[n],S/(y_1-hx_1,\cdots,y_n-hx_n) \big)\nonumber\\
 &\simeq \Delta_1^\vee[-n] \otimes_S \big(S/(y_1-hx_1,\cdots,y_n-hx_n)\big) \label{homcomplex}
 \end{align}
where the first quasi-isomorphism is due to \cite[Lemma 4.2]{Dyc}.
By self-duality of Koszul matrix factorizations, $\Delta_1^\vee[-n]$ is quasi-isomorphic to $\Delta_1$. Hence we have
\[ \eqref{homcomplex} \simeq \big( R[\theta_1,\cdots,\theta_n],d(x,hx) \big).\]
The latter is the following double complex: 
\[\xymatrixcolsep{0.8pc}\xymatrix{
& && & \vdots && & \vdots & \vdots & \\
& &&& 0 \ar[r] & R\ar[u]_{d_{vert}} \ar[r]^{d_{hor}} & \cdots \ar[r]^-{d_{hor}} & \displaystyle \bigoplus_{1\leq i_1<\cdots<i_{n-1}\leq n}R\cdot \theta_{\{i_1,\cdots,i_{n-1}\}}\ar[u]_{d_{vert}} \ar[r]^-{d_{hor}} & R\cdot \theta_{\{1,\cdots,n\}} \ar[u]_{d_{vert}}\ar[r]  & 0 \\
&& 0\ar[rr] && R \ar[r]^-{d_{hor}}\ar[u] & \displaystyle\bigoplus_{i=1}^n R\cdot \theta_i \ar[u]_{d_{vert}}\ar[r]^-{d_{hor}}&\cdots \ar[r]^-{d_{hor}} & R\cdot \theta_{\{1,\cdots, n\}} \ar[u]_-{d_{vert}}\ar[r] & 0 & \\
0\ar[rr] && R \ar[u]\ar[rr]^-{d_{hor}} && \displaystyle\bigoplus_{i=1}^n R\cdot \theta_i \ar[r]^-{d_{hor}} \ar[u]_-{d_{vert}} & \cdots \ar[r]^-{d_{hor}} & R\cdot \theta_{\{1,\cdots,n\}}\ar[r] &0& &\\
&& \vdots \ar[u]_-{d_{vert}} && \vdots \ar[u]_-{d_{vert}} & & \vdots\ar[u]_-{d_{vert}} & & &
}
\]
where 
\[d_{hor}:=\sum_{i=1}^n (hx_i-x_i)\theta_i,\quad d_{vert}:=\sum_{i=1}^n \nabla_i W(x,hx)\partial_i.\]
We compute its cohomology by spectral sequence from horizontal filtration. The first page is the cohomology with respect to $d_{hor}$. The $i$th row is 
\[\xymatrix{
0 \ar[r] & R\ar[r]^-{d_{hor}} &\bigoplus_{i=1}^n R\cdot \theta_i \ar[r]^-{d_{hor}} & \cdots \ar[r]^-{d_{hor}} &R\cdot \theta_{\{1,\cdots,n\}} \ar[r] & 0,}
\] 
and it is quasi-isomorphic to the following complex:
\[\xymatrix{
0 \ar[r] &  R^h\cdot \theta_{I_h} \ar[r]^-0 & \displaystyle  \bigoplus_{i \in I^h} R^h\cdot\theta_i \theta_{I_h}\ar[r]^-0
&\displaystyle \bigoplus_{\stackrel{i_1<i_2}{ i_1,i_2\in I^h}}R^h \cdot \theta_{i_1}\theta_{i_2}\theta_{I_h}}\]
\[\xymatrixcolsep{1pc}\xymatrix{
\; \ar[r]^-0 & \cdots \ar[r]^-0 &\displaystyle \bigoplus_{\stackrel{i_1<i_2<\cdots<i_{|I^h|-1},}{i_1,\cdots,i_{|I^h|-1}\in I^h}} R^h\cdot \theta_{i_1}\cdots \theta_{i_{|I^h|-1}}\theta_{I_h} \ar[r]^-0 & R^h \theta_{\{1,\cdots,n\}} \ar[r] & 0
}\]
where $R^h:=R/(x_i: i\in I_h)$.

By Shklyarov's Lemma 4.9 in \cite{Shk}, we can replace $d_{vert}$ by $d_{vert}'$ which is defined as
\[d_{vert}':=\sum_{i\in I^h}(\partial_{x_i}W^h)\partial_{\theta_i}.\]
We compute the cohomology of $d_{vert}'$ induced on $E_1$. Since it is isomorphic to the Koszul complex of $(\partial_{x_i}W^h: i\in I^h)$, the cohomology is concentrated on $(|I_h|+i,i)$-th position for all $i\in \Z$. Hence, the spectral sequence converges at $E_2$. Recall that the genuine cocycle of the double complex is constructed by extending tails of a cocycle at $E_2$. Any cocycle at $E_2$ is represented by $\gamma \theta_{I_h}$ for $\gamma \in \Jac(W^h)$, so our assertion holds.
\end{proof}
Combining above results, we have an isomorphism of {\em modules}
\[\Hom_{MF_{1\times G}(W(y)-W(x))}(\Delta_W^{G\times G},\Delta_W^{G\times G})\cong \Jac'(W,G).\] 
To show that it is indeed an algebra isomorphism, we need to classify cocycles in $hom_{MF(W(y)-W(x))}(\Delta_1,\Delta_h)$ for $h\in G$. The following section is devoted to that classification problem.

\section{Classification of $(1\times G)$-equivariant closed endomorphisms of $\Delta_W^{G\times G}$}\label{sec:classification}
 For $0 \leq j<i \leq n$ and $h\in G$, define
\[ \barW^h_{ji}:=W(y_1,\cdots,y_{j},x_{j+1},\cdots,x_i,hx_{i+1},\cdots,hx_n)\in S,\]
\[ \tildeW^h_{ji}:=W(x_1^h,\cdots,x_{j}^h,x_{j+1},\cdots,x_{i},hx_{i+1},\cdots,hx_n)\in R.\]
(Recall that $x_i^h=x_i$ if $hx_i=x_i$ and $x_i^h=0$ otherwise.) We also define
\[ \barW^h_{ii}:=W(y_1,\cdots,y_{i},hx_{i+1},\cdots,hx_n),\]
\[ \tildeW^h_{ii}:=W(x_1^h,\cdots,x_i^h,hx_{i+1},\cdots,hx_n).\]
For $i,j\in \{1,\cdots,n\}$, define
\begin{align*}
g^h_{ji}&:=\begin{cases}
\frac{(\barW^h_{j,i}-\barW^h_{j-1,i})-(\barW^h_{j,i-1}-\barW^h_{j-1,i-1})}{(y_j-x_j) (x_i-hx_i)} &{\rm if\;} i\in I_h, \\
0 & {\rm otherwise}
\end{cases} \\
 f^h_{ji}&:=
 \begin{cases}\frac{(\tildeW^h_{j-1,i}-\tildeW^h_{j,i})-(\tildeW^h_{j-1,i-1}-\tildeW^h_{j,i-1})}{(x_j-hx_j)(x_i-hx_i)}& {\rm if\;} i,j\in I_h,\\
 0 & {\rm otherwise}
 \end{cases}\\
 g^h_{ii}&:=
 \begin{cases}\frac{1}{y_i-x_i}\cdot\Big(\frac{\barW^h_{i,i}-\barW^h_{i-1,i-1}}{y_i-hx_i}-\frac{\barW^h_{i-1,i}-\barW^h_{i-1,i-1}}{x_i-hx_i}\Big) & {\rm if\;} i\in I_h, \\
 0 & {\rm otherwise}
 \end{cases}
 \end{align*}
 
\begin{remark}
$g^h_{ji}=0$ if $j>i$, and $f^h_{ji}=0$ if $j \geq i$.
\end{remark}

For $S=k[x_1,\cdots,x_n,y_1,\cdots,y_n]$, define an $S$-linear map
\begin{align}
 \eta_h: S[\theta_1,\cdots,\theta_n,\partial_1,\cdots,\partial_n] & \to S[\theta_1,\cdots,\theta_n,\partial_1,\cdots,\partial_n],\nonumber \\
\theta_I \partial_J& \mapsto  \sum (-1)^{|I|} g^h_{ji} \frac{\partial\theta_I}{\partial\theta_i}\partial_j\partial_J+\sum f^h_{ji}\frac{\partial^2 \theta_I}{\partial\theta_j\partial\theta_i}\partial_J. \label{etasign}
\end{align}
We also let
\begin{equation}\label{eq:exp} 
\exp(\eta_h):=1+\eta_h+\frac{\eta_h^2}{2!}+\cdots\;\;:S[\theta_1,\cdots,\theta_n,\partial_1,\cdots,\partial_n]  \to S[\theta_1,\cdots,\theta_n,\partial_1,\cdots,\partial_n].
\end{equation}
\eqref{eq:exp} is a finite sum due to graded commutativity of $\theta$ and $\partial$, hence well-defined. 

The following classification theorem is a cornerstone of this paper.
\begin{theorem}\label{thm:cocycle}	
$\exp(\eta_h)(\theta_{I_h})$ is a closed element in $hom_{MF(W(y)-W(x))}(\Delta_1,\Delta_h)$.
\end{theorem}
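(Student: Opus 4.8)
The plan is to verify directly that $D\big(\exp(\eta_h)(\theta_{I_h})\big)=0$, where $D$ is the differential on $hom_{MF(W(y)-W(x))}(\Delta_1,\Delta_h)$. Recall from the proof of Lemma \ref{lemma:homjac} that this morphism complex is quasi-isomorphic to $\big(R[\theta_1,\dots,\theta_n], d(x,hx)\big)$, but for the actual computation we should work in the original model $hom_S(\Delta_1,\Delta_h)$: an endomorphism of the underlying free module is an element of $S[\theta_1,\dots,\theta_n,\partial_1,\dots,\partial_n]$ acting by left multiplication on $\theta$'s and by the Clifford action for $\partial$'s, and $D\phi = d(hx,y)\circ\phi - (-1)^{|\phi|}\phi\circ d(x,y)$, where $d(x,y)=\sum_i\big((y_i-x_i)\theta_i+\nabla_iW(x,y)\partial_i\big)$ and $d(hx,y)=\sum_i\big((y_i-hx_i)\theta_i+\nabla_iW(hx,y)\partial_i\big)$. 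So I first want to record the explicit commutator $[d(hx,y),\cdot]\,\mp\,(\cdot)\,[d(x,y)]$ acting on a monomial $\theta_I\partial_J$ in terms of the Clifford relations \eqref{eq:cliffrel}.

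The core computation is a \emph{Cartan-type magic formula}. The operators $\eta_h$ have the shape ``second-order in $\partial$, lowering $\theta$-degree by one or two,'' and the coefficients $g^h_{ji}, f^h_{ji}, g^h_{ii}$ are precisely the iterated difference quotients that arise when one expands $\nabla_iW(hx,y)-\nabla_iW(x,y)$ and the discrepancy between $y_i-hx_i$ and $(y_i-x_i)+(x_i-hx_i)$. Concretely: the difference of the two Koszul differentials is $d(hx,y)-d(x,y)=\sum_i(x_i-hx_i)\theta_i+\sum_i\big(\nabla_iW(hx,y)-\nabla_iW(x,y)\big)\partial_i$, and both extra terms telescope through the $\barW^h_{ji}$'s and $\tildeW^h_{ji}$'s. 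The claim to establish is an operator identity of the form $[d(hx,y),\eta_h] = (\text{the extra first-order piece}) + (\text{something that vanishes on }\theta_{I_h})$, or more precisely that $D$ applied to $\exp(\eta_h)(\theta_{I_h})$ collapses because $\exp(\eta_h)$ conjugates $d(x,y)$ into $d(hx,y)$ up to terms annihilated by $\theta_{I_h}$. I would first check the \emph{leading order} statement: $D(\theta_{I_h})$, computed with $d(hx,y)$ on the left and $d(x,y)$ on the right, produces exactly the terms $\sum_i(x_i-hx_i)\theta_i\theta_{I_h}$ (which vanish: for $i\in I_h$, $\theta_i\theta_{I_h}=0$; for $i\in I^h$, $x_i-hx_i=0$) plus a $\partial$-term $\sum_{i}\big(\nabla_iW(hx,y)-\nabla_iW(x,y)\big)\partial_i\theta_{I_h}$ that is \emph{not} zero — and this is precisely the term that the $\eta_h=1$-correction is designed to cancel. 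Then the $\eta_h^2/2!,\dots$ terms cancel the higher-order discrepancies inductively; here the antisymmetry of $f^h_{ji}$ ($=0$ for $j\ge i$) and of $g^h_{ji}$ ($=0$ for $j>i$), noted in the Remark, is what makes the Clifford commutators among the $\eta_h$-summands organize into an exponential.

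I expect the main obstacle to be bookkeeping the signs and the second-order ($f^h_{ji}$) contributions. The $g^h_{ji}\partial_j(\partial/\partial\theta_i)$ part is a genuine derivation-like interior-product term and behaves well under commutation with $d(hx,y)$; the $f^h_{ji}\,\partial^2/(\partial\theta_j\partial\theta_i)$ part is second order, so $[d(hx,y),\eta_h]$ contains cross-terms between the $\theta$-part of $d(hx,y)$ and the double $\theta$-derivative, and between the $\partial$-part and the single derivative, which must be matched against the diagonal coefficients $g^h_{ii}$ and against the reindexing identities relating $\barW^h_{ji}, \tildeW^h_{ji}$ at consecutive indices. The cleanest route is: (i) define $\widetilde d := \exp(\eta_h)\,d(x,y)\,\exp(-\eta_h)$ as an operator on $S[\theta,\partial]$ and show $\widetilde d = d(hx,y)$ modulo the left ideal generated by $\{\theta_i : i\in I_h\}$ and $\{x_i-hx_i : i\in I^h\}$ — this is a purely algebraic identity among difference quotients, provable by induction on $n$ using the defining telescoping formulas for $g^h_{ji}, f^h_{ji}, g^h_{ii}$; then (ii) conclude $D\big(\exp(\eta_h)\theta_{I_h}\big)=d(hx,y)\exp(\eta_h)\theta_{I_h}\mp\exp(\eta_h)d(x,y)\theta_{I_h}=\exp(\eta_h)\big(\widetilde d\,\theta_{I_h}\mp d(x,y)\theta_{I_h}\big)+(\text{correction})$, and the correction vanishes because $\theta_{I_h}$ is annihilated by each $\theta_i$, $i\in I_h$, and $x_i-hx_i=0$ for $i\in I^h$. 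Step (i) is where all the real work lives; everything else is formal.
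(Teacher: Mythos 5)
Your plan correctly identifies the mechanism of the proof --- the $k$-th power of $\eta_h$ is there to cancel, order by order, the discrepancy between left multiplication by $d(hx,y)$ and right multiplication by $d(x,y)$ --- and your leading-order check ($k=0$) is right: the $\theta$-part of $D(\theta_{I_h})$ vanishes because $\theta_i\theta_{I_h}=0$ for $i\in I_h$ and $x_i=hx_i$ for $i\in I^h$, while the surviving $\partial$-part $\sum_i(\nabla_iW(hx,y)-\nabla_iW(x,y))\theta_{I_h}\partial_i$ is exactly what the first-order term $\eta_h(\theta_{I_h})$ must absorb. This is precisely the base case of the identity the paper proves, namely $\Ddown\big(\tfrac{\eta_h^k}{k!}\theta_{I_h}\big)=-\Dup\big(\tfrac{\eta_h^{k+1}}{(k+1)!}\theta_{I_h}\big)$ for all $k$.

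However, there is a genuine gap: everything beyond $k=0$ is deferred to your ``step (i),'' which is asserted, not proved, and that step is the entire content of the theorem. The paper spends several pages on it: an explicit closed formula \eqref{eq:kthpower} for $\tfrac{\eta_h^k}{k!}\theta_{I_h}$ with the sign $\epsilon_l$, a careful introduction and pairwise cancellation of ``dummy terms'' (which works only because the specific element $\theta_{I_h}$ is differentiated twice in the same variable --- it is not an operator-level vanishing), telescoping of the difference quotients $g^h_{ji},f^h_{ji}$, and, crucially, the $G$-invariance of $W$ via $\tildeW_{i,i}=\tildeW_{i,n}$ \eqref{eq:Winv}, without which the telescoping sums do not close. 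Your proposal never invokes $G$-invariance. Moreover, the proposed reformulation $\widetilde d=\exp(\eta_h)\,d(x,y)\,\exp(-\eta_h)$ is not set up precisely enough to be checkable: the differential acts by $d(hx,y)$ on the \emph{left} and $d(x,y)$ on the \emph{right}, and $\eta_h$ is not itself Clifford multiplication by an element (it sends $\theta_I\partial_J$ to contractions of the $\theta$-part with extra $\partial$'s appended), so conjugating a multiplication operator by $\exp(\eta_h)$ need not yield a multiplication operator, and the claim that the result agrees with $d(hx,y)$ ``modulo the left ideal generated by $\{\theta_i:i\in I_h\}$'' is stronger than, and not obviously implied by, the element-level cancellation that actually holds. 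As written, the proposal is a plausible outline of the paper's strategy rather than a proof.
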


\begin{proof}
Decompose $D=\Dup+\Ddown$, where
\begin{align*} \Dup(f\theta_I\partial_J):=&\sum (y_i-hx_i)\theta_i \cdot f\theta_I\partial_J+(-1)^{|I|+|J|+1}\sum f\theta_I\partial_J \cdot (y_i-x_i)\theta_i,\\
 \Ddown(f\theta_I\partial_J):=&\sum \nabla_i W(hx,y)\partial_i\cdot f\theta_I\partial_J + (-1)^{|I|+|J|+1} \sum f\theta_I\partial_J \cdot \nabla_i W(x,y) \partial_i.
 \end{align*}
Our assertion holds if we show the following:
\[ \Ddown\Big(\frac{\eta_h^k}{k!}\theta_{I_h}\Big)=-\Dup\Big(\frac{\eta_h^{k+1}}{(k+1)!}\theta_{I_h}\Big).\]
Throughout the proof, we let $g_{ji}:=g^h_{ji}$ and $f_{ji}:=f^h_{ji}$ for simplicity. First we verify
\begin{equation}\label{eq:kthpower}
 \frac{\eta_h^k}{k!}\theta_{I_h}=\sum_{\stackrel{l+m=k}{1\leq i_\bullet,j_\bullet\leq n}} (-1)^{\epsilon_l}g_{j_1 i_1}\cdots g_{j_l i_l} f_{i_{l+1} i_{l+2}} \cdots f_{i_{l+2m-1} i_{l+2m}}\cdot \frac{\partial^{l+2m}\theta_{I_h}}{\partial\theta_{i_1}\cdots\partial\theta_{i_{l+2m}}} \partial_{j_1}\cdots \partial_{j_l},
\end{equation}
with the sign given by 
\begin{equation}\epsilon_l:=l|I_h|+\frac{l(l-1)}{2}. \label{epsilonsign}
\end{equation}
If $l+m=k$, the coefficient $g_{j_1 i_1}\cdots g_{j_l i_l} f_{i_{l+1} i_{l+2}} \cdots f_{i_{l+2m-1} i_{l+2m}}$ occurs $k!$-times as we apply $\eta_h^k$ on $\theta_{I_h}$. To compare signs, observe that
\[ g_{lk}g_{ji} \frac{\partial^2 \theta_{I_h}}{\partial\theta_k\partial\theta_i}\partial_l\partial_j=g_{ji}g_{lk} \frac{\partial^2 \theta_{I_h}}{\partial\theta_i\partial\theta_k}\partial_j\partial_l,\]
\[ f_{kl}g_{ji} \frac{\partial^3 \theta_{I_h}}{\partial\theta_k\partial\theta_l\partial\theta_i}\partial_j=g_{ji}f_{kl} \frac{\partial^3 \theta_{I_h}}{\partial\theta_i\partial\theta_k\partial\theta_l}\partial_j,\]
\[ f_{kl}f_{ij}\frac{\partial^4 \theta_{I_h}}{\partial\theta_k\partial\theta_l\partial\theta_i\partial\theta_j}= f_{ij}f_{kl}\frac{\partial^4 \theta_{I_h}}{\partial\theta_i\partial\theta_j\partial\theta_k\partial\theta_l}\]
by graded commutativity \eqref{eq:cliffrel}. By induction, we conclude that the terms of ${\eta_h^k}\theta_{I_h}$ with coefficients $g_{j_1 i_1}\cdots g_{j_l i_l} f_{i_{l+1} i_{l+2}} \cdots f_{i_{l+2m-1} i_{l+2m}}$ are all the same including their signs. Dividing by $k!$, we get \eqref{eq:kthpower}.

We have
\begin{align}
& \Ddown\Big( \frac{\eta_h^k}{k!}\theta_{I_h}\Big) \label{eq:ddown} \\
=& \sum_{\stackrel{1\leq i \leq n}{l+m=k,1\leq i_\bullet,j_\bullet\leq n}}\nabla_i W(hx,y)\partial_i\cdot (-1)^{\epsilon_l}g_{j_1 i_1}\cdots g_{j_l i_l} f_{i_{l+1} i_{l+2}} \cdots f_{i_{l+2m-1} i_{l+2m}}\cdot \frac{\partial^{l+2m}\theta_{I_h}}{\partial\theta_{i_1}\cdots\partial\theta_{i_{l+2m}}} \partial_{j_1}\cdots \partial_{j_l} \nonumber \\
&+  \sum_{\stackrel{1\leq j \leq n}{l+m=k,1\leq i_\bullet,j_\bullet\leq n}}(-1)^{|I_h|+1+\epsilon_l}g_{j_1 i_1}\cdots g_{j_l i_l} f_{i_{l+1} i_{l+2}} \cdots f_{i_{l+2m-1} i_{l+2m}}\cdot \frac{\partial^{l+2m}\theta_{I_h}}{\partial\theta_{i_1}\cdots\partial\theta_{i_{l+2m}}} \partial_{j_1}\cdots \partial_{j_l}\cdot(\nabla_j W) \partial_j \nonumber\\
=& \sum (-1)^{|I_h|+\epsilon_l} (\nabla_j W(hx,y)-\nabla_j W)g_{j_1 i_1}\cdots g_{j_l i_l} f_{i_{l+1} i_{l+2}} \cdots f_{i_{l+2m-1} i_{l+2m}}\cdot \frac{\partial^{l+2m}\theta_{I_h}}{\partial\theta_{i_1}\cdots\partial\theta_{i_{l+2m}}} \partial_{j_1}\cdots \partial_{j_l}\partial_j \nonumber \\
&+\sum(-1)^{\epsilon_l} \nabla_i W(hx,y)\cdot g_{j_1 i_1}\cdots g_{j_l i_l} f_{i_{l+1} i_{l+2}} \cdots f_{i_{l+2m-1} i_{l+2m}}\cdot \frac{\partial^{l+2m+1}\theta_{I_h}}{\partial\theta_i\partial\theta_{i_1}\cdots\partial\theta_{i_{l+2m}}} \partial_{j_1}\cdots \partial_{j_l}. \nonumber
\end{align}

Recall that we need to show
\[ \eqref{eq:ddown}=-\Dup\Big(\frac{\eta_h^{k+1}}{(k+1)!}\theta_{I_h}\Big).\]
We consider a sum over a subcollection of summands of $\frac{\eta_h^{k+1}}{(k+1)!}\theta_{I_h}$ as follows.
\begin{align*}
&\Psi_{(j_1,i_1),\cdots,(j_l,i_l)}^{(i_{l+1},i_{l+2}),\cdots,(i_{l+2m-1},i_{l+2m})}\\
:=&\sum_{i,j}\big((-1)^{\epsilon_{l+1}}g_{ji}g_{j_1 i_1}\cdots g_{j_l i_l} f_{i_{l+1} i_{l+2}} \cdots f_{i_{l+2m-1} i_{l+2m}}\cdot \frac{\partial^{l+2m+1}\theta_{I_h}}{\partial\theta_i\partial\theta_{i_1}\cdots\partial\theta_{i_{l+2m}}} \partial_j\partial_{j_1}\cdots \partial_{j_l} \\
&\quad +(-1)^{\epsilon_{l}}f_{ji}g_{j_1 i_1}\cdots g_{j_l i_l} f_{i_{l+1} i_{l+2}} \cdots f_{i_{l+2m-1} i_{l+2m}}\cdot \frac{\partial^{l+2m+2}\theta_{I_h}}{\partial\theta_j\partial\theta_i\partial\theta_{i_1}\cdots\partial\theta_{i_{l+2m}}} \partial_{j_1}\cdots \partial_{j_l}\big).
\end{align*}
Then
\[ \frac{\eta_h^{k+1}}{(k+1)!}\theta_{I_h}= \sum_{\stackrel{l+m=k}{1\leq i_\bullet,j_\bullet\leq n}}
\Psi_{(j_1,i_1),\cdots,(j_l,i_l)}^{(i_{l+1},i_{l+2}),\cdots,(i_{l+2m-1},i_{l+2m})}.\] 
\begin{notation}
For $\Psi=\sum\limits_{I,J} f_{IJ} \theta_I \partial_J \in S[\theta_1,\cdots,\theta_n,\partial_1,\cdots,\partial_n]$, denote the summand $f_{IJ}\theta_I \partial_J$ by $\langle \Psi,\theta_I\partial_J\rangle$. Be careful that $\langle \Psi,\theta_I\partial_J\rangle$ is not just $f_{IJ}$.
\end{notation}


We observe that
\begin{align}
& \langle \Dup(\Psi_{(j_1,i_1),\cdots,(j_l,i_l)}^{(i_{l+1},i_{l+2}),\cdots,(i_{l+2m-1},i_{l+2m})}),\frac{\partial^{l+2m}\theta_{I_h}}{\partial\theta_{i_1}\cdots\partial\theta_{i_{l+2m}}} \partial_j\partial_{j_1}\cdots \partial_{j_l} \rangle  \label{eq:dup1}\\
=& \sum_i (x_i-hx_i)\cdot (-1)^{\epsilon_{l+1}}g_{ji}g_{j_1 i_1}\cdots g_{j_l i_l} f_{i_{l+1} i_{l+2}} \cdots f_{i_{l+2m-1} i_{l+2m}}\cdot \theta_i\frac{\partial^{l+2m+1}\theta_{I_h}}{\partial\theta_i\partial\theta_{i_1}\cdots\partial\theta_{i_{l+2m}}} 
\partial_j\partial_{j_1}\cdots \partial_{j_l}\nonumber\\
=& \sum_{ \stackrel{1\leq i \leq n}{i\neq i_1,\cdots,i_{l+2m}}} (x_i-hx_i)\cdot (-1)^{\epsilon_{l+1}} g_{ji}g_{j_1 i_1}\cdots g_{j_l i_l} f_{i_{l+1} i_{l+2}} \cdots f_{i_{l+2m-1} i_{l+2m}}\cdot \frac{\partial^{l+2m}\theta_{I_h}}{\partial\theta_{i_1}\cdots\partial\theta_{i_{l+2m}}} 
\partial_j\partial_{j_1}\cdots \partial_{j_l}. \nonumber
\end{align}
In the last expression, there is no contribution for $i=i_1,\cdots,i_{l+2m}$, because in each of such cases $\theta_{I_h}$ is differentiated twice by same variable $\theta_i$. Nevertheless, we add such dummy terms to \eqref{eq:dup1} and consider
\begin{equation}\label{eq:dup1irr}
\sum_{1\leq i \leq n} (x_i-hx_i) \cdot (-1)^{\epsilon_{l+1}}g_{ji}g_{j_1 i_1}\cdots g_{j_l i_l} f_{i_{l+1} i_{l+2}} \cdots f_{i_{l+2m-1} i_{l+2m}}\cdot \frac{\partial^{l+2m}\theta_{I_h}}{\partial\theta_{i_1}\cdots\partial\theta_{i_{l+2m}}} 
\partial_j\partial_{j_1}\cdots \partial_{j_l}. 
\end{equation}
We show that the dummy terms added in \eqref{eq:dup1irr} also occur as dummy terms pairwise for other summands of $\Dup\Big(\frac{\eta_h^{k+1}}{(k+1)!}\theta_{I_h}\Big)$ with opposite signs, so that they eventually cancel in $\Dup\Big(\frac{\eta_h^{k+1}}{(k+1)!}\theta_{I_h}\Big).$ 

Without loss of generality, let $i=i_1$ in \eqref{eq:dup1irr}. 
We consider another summation $\Psi_{(j,i_1),(j_2,i_2),\cdots,(j_l,i_l)}^{(i_{l+1},i_{l+2}),\cdots,(i_{l+2m-1},i_{l+2m})}$ as a part of $\frac{\eta_h^{k+1}}{(k+1)!}\theta_{I_h}$. Then
\begin{align}
& \langle \Dup\big(\Psi_{(j,i_1),(j_2,i_2),\cdots,(j_l,i_l)}^{(i_{l+1},i_{l+2}),\cdots,(i_{l+2m-1},i_{l+2m})}\big),\frac{\partial^{l+2m}\theta_{I_h}}{\partial\theta_{i_1}\cdots\partial\theta_{i_{l+2m}}} \partial_{j_1}\partial_{j}\partial_{j_2}\cdots \partial_{j_l} \rangle \nonumber\\
=& \sum_{\stackrel{1\leq i \leq n}{i\neq i_1,\cdots,i_{l+2m}}} (1-h)x_i \cdot (-1)^{\epsilon_{l+1}}g_{j_1i}g_{ji_1 }\cdots g_{j_l i_l} f_{i_{l+1} i_{l+2}} \cdots f_{i_{l+2m-1} i_{l+2m}}\cdot \frac{\partial^{l+2m}\theta_{I_h}}{\partial\theta_{i_1}\cdots\partial\theta_{i_{l+2m}}} 
\partial_{j_1}\partial_j\partial_{j_2}\cdots \partial_{j_l}. \label{eq:dup2}
\end{align}
In \eqref{eq:dup2}, the $i_1$th summand 
\begin{equation}\label{eq:i1th} 
(1-h)x_i \cdot (-1)^{\epsilon_{l+1}}g_{j_1i}g_{ji_1 }\cdots g_{j_l i_l} f_{i_{l+1} i_{l+2}} \cdots f_{i_{l+2m-1} i_{l+2m}}\cdot \frac{\partial^{l+2m}\theta_{I_h}}{\partial\theta_{i_1}\cdots\partial\theta_{i_{l+2m}}} 
\partial_{j_1}\partial_j\partial_{j_2}\cdots \partial_{j_l} 
\end{equation}
does not appear, but \eqref{eq:i1th} equals the dummy $i_1$th summand for \eqref{eq:dup1} with the opposite sign. Signs are opposite because the positions of $\partial_j$ and $\partial_{j_1}$ are swapped from each other. So nevertheless we add dummy terms for \eqref{eq:dup1} and \eqref{eq:dup2} for $i=i_1$, they cancel each other in $\Dup\Big(\frac{\eta_h^{k+1}}{(k+1)!}\theta_{I_h}\Big)$. There is no other dummy $i_1$th term which equals to \eqref{eq:i1th} in another partial summation in $\frac{\eta_h^{k+1}}{(k+1)!}\theta_{I_h}$.
 Same argument shows that adding the $i_2$th, $\cdots$, $i_l$th summand do not affect the computation of whole $\Dup\Big(\frac{\eta_h^{k+1}}{(k+1)!}\theta_{I_h}\Big)$.

Now we deal with cases $i=i_k$ for $k=l+1,\cdots,l+2m$. First we let $i=i_{l+2p-1}$ for $p=1,\cdots,m$. Then consider $\Psi_{(j,i_{l+2p-1}),(j_1,i_1),\cdots,(j_l,i_l)}^{(i_{l+1},i_{l+2}),\cdots,\widehat{(i_{l+2p-1},i_{l+2p})},\cdots,(i_{l+2m-1},i_{l+2m})}$, where the hat notation means the exclusion of indices under the hat. Compute
\begin{align}
& \langle \Dup\big(\Psi_{(j,i_{l+2p-1}),(j_1,i_1),\cdots,(j_l,i_l)}^{(i_{l+1},i_{l+2}),\cdots,\widehat{(i_{l+2p-1},i_{l+2p})},\cdots,(i_{l+2m-1},i_{l+2m})}\big),\frac{\partial^{l+2m}\theta_{I_h}}{\partial\theta_{i_{l+2p}}\partial\theta_{i_{l+2p-1}}\partial\theta_{i_1}\cdots\partial\theta_{i_{l+2p-2}}\partial\theta_{i_{l+2p+1}}\cdots\partial\theta_{i_{l+2m}} }\partial_{j}\partial_{j_1}\cdots \partial_{j_l} \rangle\label{eq:dup3}\\
=&\sum_{\stackrel{1\leq i \leq n}{i\neq i_1,\cdots,i_{l+2m}}} (1-h)x_i\cdot (-1)^{\epsilon_{l+1}}(f_{i, i_{l+2p}}-f_{i_{l+2p},i})g_{ji_{l+2p-1}}g_{j_1 i_1}\cdots g_{j_l i_l} f_{i_{l+1} i_{l+2}} \cdots f_{i_{l+2p-3}i_{l+2p-2}}f_{i_{l+2p+1}i_{l+2p+2}}\cdots f_{i_{l+2m-1} i_{l+2m}}\nonumber\\
&\;\;\;\;\cdot \frac{\partial^{l+2m}\theta_{I_h}}{\partial\theta_{i_{l+2p}}\partial\theta_{i_{l+2p-1}}\partial\theta_{i_1}\cdots\partial\theta_{i_{l+2p-2}}\partial\theta_{i_{l+2p+1}}\cdots\partial\theta_{i_{l+2m}} }\partial_{j}\partial_{j_1}\cdots \partial_{j_l}. \nonumber
\end{align}
In \eqref{eq:dup3}, there is no contribution by $i=i_{l+2p-1}$. If we add such a dummy term, then it cancels with the $i_{l+2p-1}$th dummy term for \eqref{eq:dup1}. Their signs are opposite, because the positions of $\partial\theta_{i_{l+2p-1}}$ and $\partial\theta_{i_{l+2p}}$ are swapped. We can repeat the same argument for $i=i_{l+2p}$ for $p=1,\cdots,m$. 

We conclude that
\begin{align}
& \langle\Dup\Big(\frac{\eta_h^{k+1}}{(k+1)!}\theta_{I_h}\Big),\frac{\partial^{l+2m}\theta_{I_h}}{\partial\theta_{i_1}\cdots\partial\theta_{i_{l+2m}}} \partial_j\partial_{j_1}\cdots \partial_{j_l} \rangle \label{eq:dupfirst}\\
=&\sum_{\stackrel{1\leq i \leq n}{l+m=k,1\leq i_\bullet,j_\bullet\leq n}} (1-h)x_i\cdot(-1)^{\epsilon_{l+1}} \big(g_{ji}g_{j_1 i_1}\cdots g_{j_l i_l} f_{i_{l+1} i_{l+2}} \cdots f_{i_{l+2m-1} k_{l+2m}}\nonumber \\
&\;\;\;\;\;\;\;\;\;\;\;\;\;\;\;\;\;\;\;\;\;\;\;\;\;\;\;\;\;+ f_{i,i_{l+1}}g_{ji_{l+2}j}g_{j_1 i_1}\cdots g_{j_l i_l} f_{i_{l+3} i_{l+4}}\cdots f_{i_{l+2m-1}i_{l+2m}}\nonumber \\
& \;\;\;\;\;\;\;\;\;\;\;\;\;\;\;\;\;\;\;\;\;\;\;\;\;\;\;\;\;- f_{i_{l+1},i}g_{ji_{l+2}}g_{j_1 i_1}\cdots g_{j_l i_l} f_{i_{l+3} i_{l+4}}\cdots f_{i_{l+2m-1}i_{l+2m}}\big)\cdot \frac{\partial^{l+2m}\theta_{I_h}}{\partial\theta_{i_1}\cdots\partial\theta_{i_{l+2m}}}\partial_j\partial_{j_1}\cdots \partial_{j_l}. \nonumber
\end{align}

Next, we compute
\begin{align}
& \langle \Dup(\Psi_{(j_1,i_1),\cdots,(j_l,i_l)}^{(i_{l+1},\cdots,i_{l+2m})}),
\frac{\partial^{l+2m}\theta_{I_h}}{\partial\theta_i\partial\theta_{i_1}\cdots\partial\theta_{i_{l+2m}}} \partial_{j_1}\cdots \partial_{j_l} \rangle 
\label{eq:dup4} \\
=& \sum_{j\neq j_1,\cdots,j_l} (y_j-x_j) g_{ji}\cdot (-1)^{\epsilon_{l+1}+l+|I_h|+1}g_{j_1 i_1}\cdots g_{j_l i_l} f_{i_{l+1} i_{l+2}} \cdots f_{i_{l+2m-1} i_{l+2m}}\cdot \frac{\partial^{l+2m+1}\theta_{I_h}}{\partial\theta_i\partial\theta_{i_1}\cdots\partial\theta_{i_{l+2m}}} 
\Big(\frac{\partial \theta_j}{\partial \theta_j}\Big)\partial_{j_1}\cdots \partial_{j_l}  \nonumber\\
&+ \sum_{j\neq i_1,\cdots,i_{l+2m}} (1-h)x_j f_{ij}\cdot (-1)^{\epsilon_l} g_{j_1 i_1}\cdots g_{j_l i_l} f_{i_{l+1} i_{l+2}} \cdots f_{i_{l+2m-1} i_{l+2m}}\cdot \theta_j \frac{\partial^{l+2m+2}\theta_{I_h}}{\partial\theta_i\partial\theta_j\partial\theta_{i_1}\cdots\partial\theta_{i_{l+2m}}} 
\partial_{j_1}\cdots \partial_{j_l}\nonumber\\ 
&+ \sum_{k\neq i_1,\cdots,i_{l+2m}} (1-h)x_k f_{ki}\cdot (-1)^{\epsilon_l} g_{j_1 i_1}\cdots g_{j_l i_l} f_{i_{l+1} i_{l+2}} \cdots f_{i_{l+2m-1} i_{l+2m}}\cdot \theta_k\frac{\partial^{l+2m+2}\theta_{I_h}}{\partial\theta_k\partial\theta_i\partial\theta_{i_1}\cdots\partial\theta_{i_{l+2m}}} 
\partial_{j_1}\cdots \partial_{j_l}.\nonumber \\
=& \sum_{j\neq j_1,\cdots,j_l} (y_j-x_j) g_{ji}\cdot (-1)^{\epsilon_{l+1}+l+|I_h|+1}g_{j_1 i_1}\cdots g_{j_l i_l} f_{i_{l+1} i_{l+2}} \cdots f_{i_{l+2m-1} i_{l+2m}}\cdot \frac{\partial^{l+2m+1}\theta_{I_h}}{\partial\theta_i\partial\theta_{i_1}\cdots\partial\theta_{i_{l+2m}}} 
\partial_{j_1}\cdots \partial_{j_l}  \nonumber\\
&+ \sum_{j\neq i_1,\cdots,i_{l+2m}} (1-h)x_j f_{ij}\cdot (-1)^{\epsilon_l+1} g_{j_1 i_1}\cdots g_{j_l i_l} f_{i_{l+1} i_{l+2}} \cdots f_{i_{l+2m-1} i_{l+2m}}\cdot \frac{\partial^{l+2m+1}\theta_{I_h}}{\partial\theta_i\partial\theta_{i_1}\cdots\partial\theta_{i_{l+2m}}} 
\partial_{j_1}\cdots \partial_{j_l}\nonumber\\ 
&+ \sum_{k\neq i_1,\cdots,i_{l+2m}} (1-h)x_k f_{ki}\cdot (-1)^{\epsilon_l} g_{j_1 i_1}\cdots g_{j_l i_l} f_{i_{l+1} i_{l+2}} \cdots f_{i_{l+2m-1} i_{l+2m}}\cdot \frac{\partial^{l+2m+1}\theta_{I_h}}{\partial\theta_i\partial\theta_{i_1}\cdots\partial\theta_{i_{l+2m}}} 
\partial_{j_1}\cdots \partial_{j_l}.\nonumber
\end{align}
Again, in each summation of \eqref{eq:dup4} some indices do not contribute. As above, we will consider such dummy terms altogether and show that they eventually cancel each other. In the first summation of \eqref{eq:dup4}, consider $j=j_1$ without loss of generality. For another summand $\Psi_{(j_1,i),\cdots,(j_l,i_l)}^{(i_{l+1},\cdots,i_{l+2m})}$, we have
\begin{align}
& \langle \Dup(\Psi_{(j_1,i),\cdots,(j_l,i_l)}^{(i_{l+1},\cdots,i_{l+2m})}),
\frac{\partial^{l+2m}\theta_{I_h}}{\partial\theta_i\partial\theta_{i_1}\cdots\partial\theta_{i_{l+2m}}} \partial_{j_1}\cdots \partial_{j_l} \rangle \label{eq:dup5}\\
=& \sum_{j\neq j_1,\cdots,j_l} (y_j-x_j) g_{ji_1 }\cdot (-1)^{\epsilon_{l+1}+l+|I_h|+1}g_{j_1 i}\cdots g_{j_l i_l} f_{i_{l+1} i_{l+2}} \cdots f_{i_{l+2m-1} i_{l+2m}}\cdot \frac{\partial^{l+2m+1}\theta_{I_h}}{\partial\theta_{i_1}\partial\theta_{i}\cdots\partial\theta_{i_{l+2m}}} 
\partial_{j_1}\cdots \partial_{j_l}  \nonumber\\
&+ \sum_{j\neq i, i_1,\cdots,i_{l+2m}} (1-h)x_j f_{i_1 j}\cdot (-1)^{\epsilon_l+1} g_{j_1 i}\cdots g_{j_l i_l} f_{i_{l+1} i_{l+2}} \cdots f_{i_{l+2m-1} i_{l+2m}}\cdot \frac{\partial^{l+2m+1}\theta_{I_h}}{\partial\theta_{i_1}\partial\theta_{i}\cdots\partial\theta_{i_{l+2m}}} 
\partial_{j_1}\cdots \partial_{j_l}\nonumber\\ 
&+ \sum_{k\neq i, i_1,\cdots,i_{l+2m}} (1-h)x_k f_{ki_1}\cdot (-1)^{\epsilon_l}g_{j_1 i}\cdots g_{j_l i_l} f_{i_{l+1} i_{l+2}} \cdots f_{i_{l+2m-1} i_{l+2m}}\cdot \frac{\partial^{l+2m+1}\theta_{I_h}}{\partial\theta_{i_1}\partial\theta_{i}\cdots\partial\theta_{i_{l+2m}}} 
\partial_{j_1}\cdots \partial_{j_l}.\nonumber
\end{align}
The $j_1$th dummy term for \eqref{eq:dup5} coincides with the $j_1$th dummy term for \eqref{eq:dup4} with the opposite sign. 

Likewise, in the second summation of \eqref{eq:dup4} the $i_1$th summand does not contribute. Consider
\begin{align}
& \langle \Dup\big(\Psi_{(j_2,i_2),\cdots,(j_l,i_l)}^{(i,i_{1}),(i_{l+1},i_{l+2}),\cdots,(i_{l+2m-1},i_{l+2m})}\big),\frac{\partial^{l+2m+1}\theta_{I_h}}{\partial\theta_i\partial\theta_{i_1}\cdots\partial\theta_{i_{l+2m}}}\partial_{j_1}\cdots \partial_{j_l} \rangle \label{eq:dup6} \\
=& \sum_{k\neq i,i_1,\cdots,i_{l+2m}} (1-h)x_k\cdot (-1)^{\epsilon_{l}}g_{j_1k}g_{j_2 i_2}\cdots g_{j_l i_l} f_{ii_1}f_{i_{l+1} i_{l+2}} \cdots f_{i_{l+2m-1} i_{l+2m}}\frac{\partial^{l+2m+1}\theta_{I_h}}{\partial\theta_{i_2}\cdots \partial\theta_{i_l}\partial\theta_i\partial\theta_{i_1} \partial\theta_{i_{l+1}}\cdots\partial\theta_{i_{l+2m}}} 
\partial_{j_1}\cdots \partial_{j_l}\nonumber \\
=& \sum_{k\neq i,i_1,\cdots,i_{l+2m}} (1-h)x_k\cdot (-1)^{\epsilon_{l}}g_{j_1k}g_{j_2 i_2}\cdots g_{j_l i_l} f_{ii_1}f_{i_{l+1} i_{l+2}} \cdots f_{i_{l+2m-1} i_{l+2m}}\frac{\partial^{l+2m+1}\theta_{I_h}}{\partial\theta_{i} \partial\theta_{i_1}\cdots\partial\theta_{i_{l+2m}}} 
\partial_{j_1}\cdots \partial_{j_l}.\nonumber
\end{align}
Again, the dummy $i_1$th summand for \eqref{eq:dup6} coincides with the dummy $i_1$th summand for \eqref{eq:dup5} with opposite sign. We can apply the same argument for the second summation of \eqref{eq:dup4}. Therefore, we modify \eqref{eq:dup4} adding all dummy terms and get the same result for $ \Dup\Big(\frac{\eta_h^{k+1}}{(k+1)!}\theta_{I_h}\Big)$.

By definition \eqref{epsilonsign}, we have 
\[ \epsilon_{l+1}+l+|I_h|+1=\epsilon_l+1,\]
and adding all dummy terms, we can rewrite \eqref{eq:dup4} as
\begin{align*}
&\langle \Dup\Big(\frac{\eta_h^{k+1}}{(k+1)!}\theta_{I_h}\Big),
\frac{\partial^{l+2m+1}\theta_{I_h}}{\partial\theta_i\partial\theta_{i_1}\cdots\partial\theta_{i_{l+2m}}} 
\partial_{j_1}\cdots \partial_{j_l} \rangle \\
=&\;\;  \big(\sum_{j=1}^n (y_j-x_j) g_{ji}+ \sum_{j:j>i} (1-h)x_j f_{ij}-\sum_{k:k<i} (1-h)x_j f_{ki}\big)\\
&\;\;\cdot (-1)^{\epsilon_l+1} g_{j_1 i_1}\cdots g_{j_l i_l} f_{i_{l+1} i_{l+2}} \cdots f_{i_{l+2m-1} i_{l+2m}}\cdot\frac{\partial^{l+2m+1}\theta_{I_h}}{\partial\theta_i\partial\theta_{i_1}\cdots\partial\theta_{i_{l+2m}}} 
\partial_{j_1}\cdots \partial_{j_l}.
\end{align*}

Finally we are at the stage of wrapping up the proof.
By definition of $g_{ji}$, we have
\[ \sum_{i= j}^n (x_i-hx_i) g_{ji}= \nabla_j W- \nabla_j W(hx,y).\]
Also,
\[ \sum_{j =1}^i(y_j-x_j) g_{ji}=  \nabla_i W(hx,y)-\frac{\barW_{0,i}-\barW_{0,i-1}}{x_i-hx_i}.\]
Note that 
\begin{equation}\label{eq:bartilde}
\barW_{0,i}=\tildeW_{0,i}.
\end{equation}
 Also by $G$-invariance of $W$, for any $i$,
\begin{align}
 \tildeW_{i,i}&=W(x_1^h,\cdots,x_i^h,hx_{i+1},\cdots,hx_n)\nonumber\\
 &=W(hx_1^h,\cdots,hx_i^h,hx_{i+1},\cdots,hx_n)\nonumber\\
 &=W(x_1^h,\cdots,x_i^h,x_{i+1},\cdots,x_n)\nonumber\\
 &=\tildeW_{i,n}. \label{eq:Winv}
 \end{align}
Consequently, we have
\begin{align}
& \sum_{k=1}^{i-1} (x_k-hx_k) f_{ki} -\sum_{j=i+1}^n (x_j-hx_j) f_{ij}\nonumber  \\
=&\sum_{k=1}^{i-1}\frac{(\tildeW_{k-1,i}-\tildeW_{k,i})-(\tildeW_{k-1,i-1}-\tildeW_{k,i-1})}{x_i-hx_i}
- \sum_{j=i+1}^n\frac{(\tildeW_{i-1,j}-\tildeW_{i-1,j-1})-(\tildeW_{i,j}-\tildeW_{i,j-1})}{x_i-hx_i}\nonumber\\
=&\frac{(\tildeW_{0,i}-\tildeW_{i-1,i})-(\tildeW_{0,i-1}-\tildeW_{i-1,i-1})-(\tildeW_{i-1,n}-\tildeW_{i-1,i})+(\tildeW_{i,n}-\tildeW_{i,i})}{x_i-hx_i} \nonumber \\
=& \frac{\tildeW_{0,i}-\tildeW_{0,i-1}+\tildeW_{i-1,i-1}-\tildeW_{i-1,n}+\tildeW_{i,n}-\tildeW_{i,i}}{x_i-hx_i} \label{eq:fup}
\end{align}
Above observations \eqref{eq:bartilde} and \eqref{eq:Winv} show that
\[ \eqref{eq:fup}=\frac{\barW_{0,i}-\barW_{0,i-1}}{x_i-hx_i}.\]
By \eqref{eq:ddown}, 
\begin{align*}
&\langle \Ddown\Big( \frac{\eta_h^k}{k!}\theta_{I_h}\Big),\frac{\partial^{l+2m}\theta_{I_h}}{\partial\theta_{i_1}\cdots\partial\theta_{i_{l+2m}}} \partial_j \partial_{j_1}\cdots \partial_{j_l}\rangle \\
=& \sum (-1)^{|I_h|+\epsilon_l+l} (\nabla_j W(hx,y)-\nabla_j W)g_{j_1 i_1}\cdots g_{j_l i_l} f_{i_{l+1} i_{l+2}} \cdots f_{i_{l+2m-1} i_{l+2m}}\cdot \frac{\partial^{l+2m}\theta_{I_h}}{\partial\theta_{i_1}\cdots\partial\theta_{i_{l+2m}}} \partial_j\partial_{j_1}\cdots \partial_{j_l}.
\end{align*}
We fix indices $(i_r,j_r)$ and $(i_s,i_{s+1})$ for $r=1,\cdots,l$ and $s=l,\cdots,l+2m-1$ of the summation. Fixing same indices in \eqref{eq:dupfirst}, we have 
\begin{align*}
& \langle \Dup \Big(\frac{\eta_h^{k+1}}{(k+1)!}\theta_{I_h}\Big),\frac{\partial^{l+2m}\theta_{I_h}}{\partial\theta_{i_1}\cdots\partial\theta_{i_{l+2m}}} \partial_j \partial_{j_1}\cdots \partial_{j_l}\rangle \\
=& \sum_i (x_i-hx_i)\cdot(-1)^{\epsilon_{l+1}} g_{ji}g_{j_1 i_1}\cdots g_{j_l i_l} f_{i_{l+1} i_{l+2}} \cdots f_{i_{l+2m-1} i_{l+2m}}\frac{\partial^{l+2m}\theta_{I_h}}{\partial\theta_{i_1}\cdots\partial\theta_{i_{l+2m}}}\partial_j\partial_{j_1}\cdots \partial_{j_l} \\
=& (-1)^{\epsilon_{l+1}}\big(\nabla_j W- \nabla_j W(hx,y)\big)g_{j_1 i_1}\cdots g_{j_l i_l} f_{i_{l+1} i_{l+2}} \cdots f_{i_{l+2m-1} i_{l+2m}}\frac{\partial^{l+2m}\theta_{I_h}}{\partial\theta_{i_1}\cdots\partial\theta_{i_{l+2m}}}\partial_j\partial_{j_1}\cdots \partial_{j_l} \\
=& (-1)^{|I_h|+\epsilon_l+l+1}\big(\nabla_j W(hx,y)-\nabla_j W \big)g_{j_1 i_1}\cdots g_{j_l i_l} f_{i_{l+1} i_{l+2}} \cdots f_{i_{l+2m-1} i_{l+2m}}\frac{\partial^{l+2m}\theta_{I_h}}{\partial\theta_{i_1}\cdots\partial\theta_{i_{l+2m}}}\partial_j\partial_{j_1}\cdots \partial_{j_l}\\
=&- \langle \Ddown\Big( \frac{\eta_h^k}{k!}\theta_{I_h}\Big),\frac{\partial^{l+2m}\theta_{I_h}}{\partial\theta_{i_1}\cdots\partial\theta_{i_{l+2m}}} \partial_j \partial_{j_1}\cdots \partial_{j_l}\rangle,
\end{align*}
hence
\[ \Dup \Big(\frac{\eta_h^{k+1}}{(k+1)!}\theta_{I_h}\Big)=-\Ddown\Big( \frac{\eta_h^k}{k!}\theta_{I_h}\Big). \qedhere\]
\end{proof}


\begin{corollary}\label{cor:rank1rep}
Any cohomology class in $\Hom(\Delta_1,\Delta_h)$ is represented by an element $f(x)\cdot \exp(\eta_h)\theta_{I_h}$ for $f\in R$. In other words, $\exp(\eta_h)\theta_{I_h}$ is a generator of $\Hom(\Delta_1,\Delta_h)$ as a rank $1$ free $\Jac(W^h)$-module.
\end{corollary}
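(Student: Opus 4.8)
The plan is to read the statement off from Lemma~\ref{lemma:homjac} together with Theorem~\ref{thm:cocycle}. From the proof of Lemma~\ref{lemma:homjac} I would extract the following: the cohomology $\Hom(\Delta_1,\Delta_h)$ is computed by the horizontal-filtration spectral sequence of the morphism complex (equivalently, of the reduced complex $(R[\theta_1,\dots,\theta_n],d(x,hx))$); this spectral sequence degenerates at $E_2$; and $E_2=E_\infty$ is a rank-one free $\Jac(W^h)$-module concentrated in a single bidegree, with distinguished generator the class of $\theta_{I_h}$. Hence a $D$-cocycle $\Phi$ represents a \emph{generator} of the free $\Jac(W^h)$-module $\Hom(\Delta_1,\Delta_h)$ as soon as the component of $\Phi$ living in the extreme piece of the filtration (maximal $\theta$-degree, minimal $\partial$-degree) is exactly $\theta_{I_h}$; here one uses that $\Jac(W^h)$ is an Artinian local $k$-algebra, so that a generator of a finite free module over it is detected modulo the maximal ideal.

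Now I would apply Theorem~\ref{thm:cocycle}, which says $\exp(\eta_h)\theta_{I_h}$ is $D$-closed. Its expansion $\theta_{I_h}+\eta_h(\theta_{I_h})+\tfrac{1}{2!}\eta_h^2(\theta_{I_h})+\cdots$ has $\theta_{I_h}$ as its extreme-filtration term, because each application of $\eta_h$ strictly lowers the difference (number of $\theta$'s) minus (number of $\partial$'s) by $2$: the $g^h_{ji}$-summand removes one $\theta$ and inserts one $\partial$, while the $f^h_{ji}$-summand removes two $\theta$'s. Thus every correction term $\eta_h^k(\theta_{I_h})$ with $k\ge 1$ sits strictly below $\theta_{I_h}$ in the filtration and vanishes in the associated graded, so $\exp(\eta_h)\theta_{I_h}$ represents the distinguished generator; in particular it is not a coboundary, and $\Hom(\Delta_1,\Delta_h)=\Jac(W^h)\cdot[\exp(\eta_h)\theta_{I_h}]$. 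It remains to produce every class in the stated form. Since $D$ is $S$-linear (it is built from the $S$-linear differentials of $\Delta_1$ and $\Delta_h$), each $f(x)\cdot\exp(\eta_h)\theta_{I_h}$ with $f\in R$ is again $D$-closed; and under the module structure, multiplication by $f(x)$ realizes the action of $[f^h]\in\Jac(W^h)$ on the generator. Letting $f$ run over lifts to $R$ of all elements of $\Jac(W^h)$ (using that $R\twoheadrightarrow R^h\twoheadrightarrow\Jac(W^h)$) then yields every cohomology class as $f(x)\cdot\exp(\eta_h)\theta_{I_h}$.

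The step I expect to cost the most is making ``extreme-filtration term $=\theta_{I_h}$'' match literally between the two pictures. Lemma~\ref{lemma:homjac} carries out its spectral-sequence computation after Dyckerhoff's adjunction and the self-duality $\Delta_1^\vee[-n]\simeq\Delta_1$, i.e.\ on the reduced model $(R[\theta],d(x,hx))$ in which there are no $\partial$'s and the $E_\infty$-generator is the class with \emph{fewest} extra $\theta$'s, whereas $\exp(\eta_h)\theta_{I_h}$ is written in the unreduced complex $S[\theta_1,\dots,\theta_n,\partial_1,\dots,\partial_n]$, where the leading term is the one with \emph{most} $\theta$'s and \emph{fewest} $\partial$'s. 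One must check that the connecting quasi-isomorphisms are filtered, so that ``leading term $\theta_{I_h}$'' transports correctly across this $\theta\leftrightarrow\partial$ duality; the cleaner alternative is to rerun the horizontal-filtration spectral sequence directly on $hom_{MF(W(y)-W(x))}(\Delta_1,\Delta_h)$ using the decomposition $D=\Dup+\Ddown$, reproving the degeneration at $E_2$ in that model, after which the identification of leading terms is immediate from the expansion of $\exp(\eta_h)$ recorded in the proof of Theorem~\ref{thm:cocycle}. Once this is in place, the rest is a formal spectral-sequence argument: a cocycle and the $E_\infty$-generator with the same associated-graded class differ by a coboundary.
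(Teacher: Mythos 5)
Your proposal is correct in outline but takes a genuinely different route from the paper at the decisive step, namely showing that $\exp(\eta_h)(\theta_{I_h})$ actually generates the rank-one free $\Jac(W^h)$-module $\Hom(\Delta_1,\Delta_h)$. You argue via the filtration: the class of a cocycle in $E_\infty$ equals the class of its leading term, the leading term of $\exp(\eta_h)(\theta_{I_h})$ is $\theta_{I_h}$, and $[\theta_{I_h}]$ is the distinguished generator of $E_2=E_\infty$, so Nakayama upgrades this to generation of the whole module. The paper never returns to the spectral sequence at this point: it defines the map $f\cdot\xi_h\mapsto f\cdot\exp(\eta_h)(\theta_{I_h})$, proves injectivity by an explicit homotopy computation (a nullhomotopy forces
$f(x)=\sum_i(y_i-hx_i)s'_{1,i}+\sum_i\nabla_iW(hx,y)\,s'_{2,i}+\sum_i(y_i-x_i)s'_{3,i}$,
and substituting $y=hx$ exhibits $f$ as an element of the Jacobian ideal of $W^h$ plus a term vanishing on ${\rm Fix}(h)$), and then deduces surjectivity from the fact that a rank-one free module over the Artinian ring $\Jac(W^h)$ has no proper free submodule, since every non-unit is a zero-divisor. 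The trade-off is exactly the one you flag: your route requires transporting the filtration and its $E_2$-generator across Dyckerhoff's adjunction and the self-duality $\Delta_1^\vee[-n]\simeq\Delta_1$, or else re-running the spectral sequence on the unreduced complex $S[\theta_1,\dots,\theta_n,\partial_1,\dots,\partial_n]$ with $D=\Dup+\Ddown$; you correctly identify this as the costly step but do not carry it out, whereas the paper's homotopy argument sidesteps it entirely by inspecting only the component of the morphism on $S\subset\Delta_1$. Both strategies are sound; yours makes explicit the ``extend the leading term'' philosophy already implicit in Lemma \ref{lemma:homjac}, while the paper's is more self-contained and its homotopy computation is reused verbatim in the proof of Lemma \ref{lem:leadingterm}.
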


\begin{proof}
By Lemma \ref{lemma:homjac} we know that $\Hom(\Delta_1,\Delta_h)$ is a free $\Jac(W^h)$-module of rank 1. $\Jac(W^h)$ is finite dimensional over $k$ because $W$ has the isolated singularity. Therefore a rank 1 free $\Jac(W^h)$-module does not have any proper free submodule: any element of $\Jac(W^h)$ is not a zero-divisor if and only if it is a unit.
Define a module homomorphism
\begin{align} 
\Jac(W^h)\cdot \xi_h & \to \Hom(\Delta_1,\Delta_h),\nonumber\\
f(x)\cdot \xi_h & \mapsto f(x)\cdot \exp(\eta_h)(\theta_{I_h}). \label{eq:jachom}
\end{align}
We verify that \eqref{eq:jachom} is an injective module homomorphism. Suppose that $f(x)\cdot \exp(\eta_h)(\theta_{I_h})$ is nullhomotopic. The morphism $f(x)\cdot \exp(\eta_h)(\theta_{I_h})$ restricts to $f(x)\cdot \theta_{I_h}$ on $S \subset \Delta_1=S[\theta_1,\cdots,\theta_n]$, and a homotopy is given as follows.
\[\displaystyle\xymatrix{
& & S \ar[ddll]_(.35){s_1}\ar[dd]^{f(x)\cdot\theta_{I_h}} \ar[rr]^-{\sum_i (y_i-x_i)\theta_i} \ar[ddrr]^(.35){s_2} & & \bigoplus_i S\cdot \theta_i \ar[ddll]^(.2){s_3}\\
& & & & \\
\bigoplus_{i\in I_h} S\cdot \theta_{I_h-\{i\}} \ar[rr]^-{\sum_i (y_i-hx_i)\theta_i} & & S\cdot \theta_{I_h} & & \bigoplus_i S\cdot \theta_{I_h \cup \{i\}} \ar[ll]_-{\sum_i \nabla_i W(hx,y)\partial_i}}
\]
We conclude that
\begin{equation} \label{eq:fxhomotopy}
f(x)=\sum_i (y_i-hx_i)s'_{1,i}+\sum_i \nabla_i W(hx,y) s'_{2,i}+\sum_i (y_i-x_i)s'_{3,i} 
\end{equation}
for some $s'_{1,i},s'_{2,i},s'_{3,i} \in S$ for $i=1,\cdots,n$. Since the left hand side of \eqref{eq:fxhomotopy} is a polynomial only in $x$ variables, the right hand side does not change after we substitute $y_i=hx_i$ for all $i$. Therefore we can rewrite \eqref{eq:fxhomotopy} as 
\begin{align*}
 f(x)&=\sum_i \nabla_i W(hx,y)|_{y=hx} s'_{2,i}(x,hx)+ \sum_{i\in I_h} (hx_i-x_i) s'_{3,i}(x,hx) \\
 &= \sum_i h_i^{-1}\partial_i W\cdot s'_{2,i}(x,hx)+\sum_{i\in I_h}(hx_i-x_i)  s'_{3,i}(x,hx).
 \end{align*}
The first summation is in the Jacobian ideal of $W$, and the second summation is zero on ${\rm Fix}(h)$. So $f(x)$ is zero in $\Jac(W^h)$. Since \eqref{eq:jachom} is injective, it is also surjective because $\Hom(\Delta_1,\Delta_h)$ is a rank 1 free $\Jac(W^h)$-module and thus has no proper free submodule.
\end{proof}

\section{Comparison of multiplications}\label{sec:comparemulti}
We are ready to examine the algebra structure on $\Hom_{1\times G}(\Delta_W^{G\times G},\Delta_W^{G\times G})$. By Lemma \ref{lem:1gequivendo}, we naturally construct the multiplication $\cup$ on $\bigoplus_{g\in G}\Hom(\Delta_1,\Delta_g)$ as follows. Let $\phi \in hom(\Delta_1,\Delta_g)$ and $\psi \in hom(\Delta_1,\Delta_h)$. Then 
\begin{equation}\label{eq:cupprod}
\phi \cup \psi:= (h_*\phi) \circ \psi \in hom(\Delta_1,\Delta_{gh}) 
\end{equation}
where $h_*\phi: \Delta_h \to \Delta_{gh}$ is defined by
\[ (h_* \phi)(r(x,y)v_h):= r(x,y)\cdot (1\times h^{-1})\cdot (\phi(\rho(h)v_1)). \]
The definition of $\cup$ in \eqref{eq:cupprod}  is justified by \eqref{eq:translation}. Namely, given $\phi: \Delta_1 \to \Delta_g$, the $(1\times G)$-equivariant endomorphism of $\Delta_W^{G\times G}$ which restricts to $\phi$ on $\Delta_1$ is $\bigoplus_{h\in G} h_*\phi$. 
\begin{remark}
If $\phi=f(x,y)\theta_I \partial_J \in hom(\Delta_1,\Delta_g)$, then 
\[ h_*\phi= f(x,h^{-1}y)\cdot \rho(h^{-1})(\theta_I\partial_J) \in hom(\Delta_h,\Delta_{hg}).\]
\end{remark}


\begin{lemma}\label{lem:leadingterm}
Let $\phi \in hom_{MF(W(y)-W(x))}(\Delta_1,\Delta_h)$ be a closed element. If the coefficient of $\theta_{I_h}$ in $\phi$ is zero, then $\phi$ is nullhomotopic.
\end{lemma}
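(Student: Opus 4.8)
The plan is to use the explicit double-complex structure of $hom_{MF(W(y)-W(x))}(\Delta_1,\Delta_h)$ together with the spectral-sequence analysis already carried out in the proof of Lemma~\ref{lemma:homjac}. Recall from that proof that, after the self-duality identification $\Delta_1^\vee[-n]\simeq\Delta_1$, the complex $hom(\Delta_1,\Delta_h)$ is quasi-isomorphic to the double complex $\big(R[\theta_1,\dots,\theta_n],\,d_{hor}+d_{vert}\big)$ with $d_{hor}=\sum_i(hx_i-x_i)\theta_i$ and $d_{vert}=\sum_i\nabla_iW(x,hx)\partial_i$, and that the associated spectral sequence (horizontal filtration first) degenerates at $E_2$, with $E_2$-page concentrated in the single column of ``$\theta$-degree'' $d_h=|I_h|$ and isomorphic there to $\Jac(W^h)\cdot\theta_{I_h}$. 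The genuine cocycles are obtained from $E_2$-classes by extending tails, so every cocycle class is represented by a $\gamma\theta_{I_h}+(\text{higher }\theta\text{-order corrections})$ with $\gamma\in\Jac(W^h)$.

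First I would make precise what ``the coefficient of $\theta_{I_h}$ in $\phi$'' means: writing $\phi=\sum_I f_I(x,y)\theta_I\partial_J$ (using the Clifford presentation) and restricting to $\Delta_1$, the component living in the relevant $E_\infty=E_2$ slot is exactly the coefficient of $\theta_{I_h}$. The hypothesis says this leading coefficient vanishes. I would then argue: since $\phi$ is closed and the spectral sequence degenerates at $E_2$ with $E_2$ supported only in the $\theta_{I_h}$-column, the class $[\phi]$ is detected entirely by its image in that column; if that image is zero, then $[\phi]=0$ in the associated graded, hence — because the filtration is finite (bounded number of $\theta$'s) and exhaustive — $[\phi]=0$ in cohomology, i.e. $\phi$ is nullhomotopic. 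Concretely this is the statement that a cocycle whose leading ($E_2$) term is zero must be a coboundary, which one proves by descending induction on the filtration degree using $E_r$-degeneration: the top filtration piece of $\phi$ is a $d_{hor}$-cocycle in a row, hence (by the $E_1$ computation, which showed each row has cohomology only in $\theta$-degree $\geq|I_h|$ concentrated appropriately) a $d_{hor}$-coboundary plus something pushed into lower filtration, and one iterates.

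An alternative, perhaps cleaner route is to invoke Corollary~\ref{cor:rank1rep} directly: it asserts $\exp(\eta_h)\theta_{I_h}$ generates $\Hom(\Delta_1,\Delta_h)$ as a rank-$1$ free $\Jac(W^h)$-module, and from the definition of $\eta_h$ one sees that $\exp(\eta_h)\theta_{I_h}=\theta_{I_h}+(\text{terms of strictly higher }\theta\text{-order in }\partial\theta_i\text{-corrections})$ — in particular its $\theta_{I_h}$-coefficient is $1$. Hence for a closed $\phi$ with class $[\phi]=[\,f(x)\exp(\eta_h)\theta_{I_h}\,]$, the $\theta_{I_h}$-coefficient of $\phi$ equals $f(x)$ modulo a $d$-exact term; if the former is zero then $f(x)=0$ in $\Jac(W^h)$, so $[\phi]=0$. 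One must check that passing to a cohomologous representative does not alter the $\theta_{I_h}$-coefficient modulo the Jacobian ideal of $W^h$ — this is exactly the computation in Corollary~\ref{cor:rank1rep} showing that a nullhomotopic $f(x)\theta_{I_h}$ forces $f\in\Jac(W^h)$-ideal, i.e. $[f]=0$.

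The main obstacle is bookkeeping rather than conceptual: one has to be careful that the ``coefficient of $\theta_{I_h}$'' is genuinely the quantity that survives to $E_\infty$, i.e. that lower-filtration terms of $\phi$ cannot conspire (via a differential or a change of representative) to shift a nonzero $E_2$-class into, or out of, that slot. This is guaranteed by the $E_2$-degeneration established in Lemma~\ref{lemma:homjac} and by the fact that $d_{hor}$ strictly raises $\theta$-degree while $d_{vert}$ preserves the row, so no differential can feed into the bottom ($\theta_{I_h}$) row from elsewhere; I would spell this out as a one-paragraph filtration argument and otherwise lean on the two cited results.
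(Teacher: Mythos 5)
Your second (``alternative'') route is precisely the paper's proof: invoke Corollary \ref{cor:rank1rep} to write $[\phi]=[f\cdot\exp(\eta_h)(\theta_{I_h})]$, note that $\eta_h$ strictly \emph{lowers} the $\theta$-degree (your phrase ``strictly higher $\theta$-order'' is backwards, though your conclusion that the $\theta_{I_h}$-coefficient of $\exp(\eta_h)(\theta_{I_h})$ equals $1$ is correct), and then read off from the homotopy the relation \eqref{eq:fxy}, whose specialization at $y=hx$ --- the computation already carried out at the end of Corollary \ref{cor:rank1rep} --- forces $f=0$ in $\Jac(W^h)$. The spectral-sequence route you sketch first is a viable alternative, but it carries exactly the bookkeeping burden you identify: one must track the $\theta_{I_h}$-coefficient of $\phi\in S[\theta_1,\dots,\theta_n,\partial_1,\dots,\partial_n]$ through the quasi-isomorphisms of Lemma \ref{lemma:homjac} (passage to the stable module, dualization, and restriction to $y=hx$) and verify that it lands in the unique surviving $E_2$-slot; the paper sidesteps this by arguing directly with the homotopy inside $hom(\Delta_1,\Delta_h)$, which is why it only needs the single component $S\to S\cdot\theta_{I_h}$ of the diagram.
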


\begin{proof}
By Corollary \ref{cor:rank1rep}, $\phi$ is homotopic to $f\cdot \exp(\eta_h)\theta_{I_h}$ for some $f\in R$. Let $s$ be a homotopy between $\phi$ and $f\cdot \exp(\eta_h)\theta_{I_h}$. Consider the following diagram which expresses a part of morphisms from $\Delta_1$ to $\Delta_h$.
\[\displaystyle\xymatrix{
& & S \ar[ddll]_(.35){s_1}\ar@<0.5ex>[dd]^{f\theta_{I_h}} \ar@<-0.5ex>[dd]_-0\ar[rr]^-{\sum_i (y_i-x_i)\theta_i} \ar[ddrr]^(.35){s_2} & & \bigoplus_i S\cdot \theta_i \ar[ddll]^(.2){s_3}\\
& & & & \\
\bigoplus_{i\in I_h} S\cdot \theta_{I_h-\{i\}} \ar[rr]^-{\sum_i (y_i-hx_i)\theta_i} & & S\cdot \theta_{I_h} & & \bigoplus_i S\cdot \theta_{I_h \cup \{i\}} \ar[ll]_-{\sum_i \nabla_i W(hx,y)\partial_i}}
\]
In this part $S$ is mapped to $0$ via $\phi$, while mapped by $f\theta_{I_h}$ via $f\cdot \exp(\eta_h)\theta_{I_h}$. The homotopy $s$ gives rise to
\[ f\theta_{I_h}= \big(\sum_i (y_i-hx_i)\theta_i\big)\cdot s_1 +\big(\sum_i \nabla_i W(hx,y)\partial_i\big)\cdot s_2 +(-1)^{|s_3|+1} s_3 \cdot \sum_i (y_i-x_i)\theta_i,\]
so
\begin{equation}\label{eq:fxy} f(x)=\sum_i (y_i-hx_i)s'_{1,i}+\sum_i \nabla_i W(hx,y) s'_{2,i}+\sum_i (y_i-x_i)s'_{3,i} \end{equation}
for some $s'_{1,i},s'_{2,i},s'_{3,i} \in S$ for $i=1,\cdots,n$. Now the latter part of the proof of Corollary \ref{cor:rank1rep} implies that $f(x)$ is zero inside $\Jac(W^h)$, so $f\cdot \exp(\eta_h)\theta_{I_h}$ is zero in $\Hom(\Delta_1,\Delta_h)$. Therefore, $\phi$ is nullhomotopic.
\end{proof}
Lemma \ref{lem:leadingterm} implies that a cohomology class in $\Hom(\Delta_1,\Delta_h)$ is completely determined by its coefficient of $\theta_{I_h}$. Hence when we compute $\phi\cup \psi$ for $\phi\in \Hom(\Delta_1,\Delta_{g}) $ and $\psi\in \Hom(\Delta_1,\Delta_{h})$, it suffices to compute the coefficient of $\theta_{I_{gh}}$ in $(h_*\phi) \circ \psi$. 

Before introducing our main theorem, we need two computations concerning Clifford variables.
\begin{lemma}
Let $\partial_{i_s}\otimes \partial_{j_s} \in k[\partial_1,\cdots,\partial_n]^{\otimes 2}$ for $s=1,\cdots,k$. Then
\begin{equation}\label{eq:thetatensorprod}
 (\partial_{i_1}\otimes \partial_{j_1}) \cdots (\partial_{i_k}\otimes \partial_{j_k})
=(-1)^{\frac{k(k-1)}{2}}\partial_{i_1}\cdots\partial_{i_k}\otimes \partial_{j_1}\cdots\partial_{j_k}.
\end{equation}
\end{lemma}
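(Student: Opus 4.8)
The plan is to prove the identity by a straightforward induction on $k$, keeping careful track of Koszul signs arising from the graded-commutativity relations \eqref{eq:cliffrel}. The base case $k=1$ is trivial since $(-1)^{0}=1$. For the inductive step, I would write $(\partial_{i_1}\otimes\partial_{j_1})\cdots(\partial_{i_{k+1}}\otimes\partial_{j_{k+1}}) = \big((\partial_{i_1}\otimes\partial_{j_1})\cdots(\partial_{i_k}\otimes\partial_{j_k})\big)\cdot(\partial_{i_{k+1}}\otimes\partial_{j_{k+1}})$ and apply the induction hypothesis to the first factor, obtaining $(-1)^{\frac{k(k-1)}{2}}(\partial_{i_1}\cdots\partial_{i_k}\otimes\partial_{j_1}\cdots\partial_{j_k})(\partial_{i_{k+1}}\otimes\partial_{j_{k+1}})$.

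The key step is then to compute the product of two homogeneous tensors in the algebra $k[\partial_1,\dots,\partial_n]^{\otimes 2}$. The multiplication there is the graded tensor product: $(a\otimes b)(c\otimes d) = (-1)^{|b||c|}(ac)\otimes(bd)$. Here $b=\partial_{j_1}\cdots\partial_{j_k}$ has degree $k$ (each $\partial$ has degree $1$ in this context) and $c=\partial_{i_{k+1}}$ has degree $1$, so the sign picked up is $(-1)^{k}$. Hence the expression becomes $(-1)^{\frac{k(k-1)}{2}}(-1)^{k}\,\partial_{i_1}\cdots\partial_{i_{k+1}}\otimes\partial_{j_1}\cdots\partial_{j_{k+1}}$, and one checks $\frac{k(k-1)}{2}+k = \frac{k(k+1)}{2}$, which is exactly the exponent required for $k+1$. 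This closes the induction.

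The only thing requiring care — and the one potential pitfall — is the sign convention for the product on the tensor square of a graded algebra, i.e. whether the Koszul sign is governed by $|b||c|$ or by some other pairing of degrees, and consistency of that convention with the rest of the paper (in particular with how $\Upsilon$ and $H_W$ are set up). I would state the convention explicitly before running the induction so there is no ambiguity. With that pinned down, the computation is entirely mechanical and the result follows immediately.
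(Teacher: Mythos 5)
Your proof is correct and follows the same route as the paper: the paper likewise states the Koszul sign rule $(\partial_I\otimes\partial_J)(\partial_K\otimes\partial_L)=(-1)^{|J||K|}\partial_I\partial_K\otimes\partial_J\partial_L$ and then concludes by induction, exactly as you do with the check $\tfrac{k(k-1)}{2}+k=\tfrac{k(k+1)}{2}$. Your added care about pinning down the sign convention is sensible but does not change the argument.
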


\begin{proof}
Note that $(\partial_I\otimes \partial_J)\cdot (\partial_K \otimes \partial_L)=(-1)^{|J||K|}\partial_I\partial_K \otimes \partial_J\partial_L$ in $k[\partial_1,\cdots,\partial_n]^{\otimes 2}$. The proof is finished by induction.
\end{proof}

\begin{lemma}\label{lem:mf-jac-comparison}
For $h_1,h_2\in G$, suppose that we have maps
\[\omega_{h_i}^{(1,1)},\; \omega_{h_i}^{(2,0)}: S[\theta_1,\cdots,\theta_n,\partial_1,\cdots,\partial_n] \to S[\theta_1,\cdots,\theta_n,\partial_1,\cdots,\partial_n]\]
as follows.
\[ \omega_{h_1}^{(1,1)}(\theta_I\partial_J)=\sum_{\stackrel{i\leq j}{i,j\in I_{h_1}}} (-1)^{|I|} a_{ij}\frac{\partial \theta_I}{\partial \theta_j}\partial_i\partial_J,\quad 
\omega_{h_1}^{(2,0)}(\theta_I\partial_J)=\sum_{\stackrel{i<j}{i,j\in I_{h_1}}}b_{ij}\frac{\partial\theta_I}{\partial\theta_i\partial\theta_j}\partial_J,\]
\[ \omega_{h_2}^{(1,1)}(\theta_I\partial_J)=\sum_{\stackrel{i\leq j}{i,j\in I_{h_2}}} (-1)^{|I|} c_{ij}\frac{\partial \theta_I}{\partial \theta_j}\partial_i\partial_J,\quad 
\omega_{h_2}^{(2,0)}(\theta_I\partial_J)=\sum_{\stackrel{i<j}{i,j\in I_{h_2}}}d_{ij}\frac{\partial\theta_I}{\partial\theta_i\partial\theta_j}\partial_J\]
where $a_{ij},b_{ij},c_{ij},d_{ij} \in S$.
Let 
\[\omega_{h_1}:=\omega_{h_1}^{(1,1)}+\omega_{h_1}^{(2,0)}, \quad 
\omega_{h_2}:=\omega_{h_2}^{(1,1)}+\omega_{h_2}^{(2,0)}.\]
Then
\begin{align}
&  \Big\langle\sum_{k+l=n} \frac{\omega_{h_1}^k(\theta_{I_{h_1}})}{k!} \cdot\frac{\omega_{h_2}^l(\theta_{I_{h_2}})}{l!}, \theta_{I_{h_1h_2}} \Big\rangle \label{eq:matrixmulti}\\
=&\Big\langle \frac{1}{n!} \Upsilon \Big( \big( \sum_{\stackrel{i\leq j}{i,j\in I_{h_1}}} (a_{ij}\partial_j\otimes \partial_i+b_{ij} \partial_i\partial_j \otimes 1)+\sum_{\stackrel{i<j}{i,j\in I_{h_2}}}d_{ij} \cdot 1\otimes \partial_i\partial_j\big)^n\otimes \theta_{I_{h_1}}\otimes \theta_{I_{h_2}}\Big), \theta_{I_{h_1h_2}} \Big\rangle. \label{eq:shkformula}
\end{align}

\end{lemma}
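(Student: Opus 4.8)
The plan is to expand both sides of the claimed identity into explicit sums of Clifford monomials and compare them configuration by configuration; the only real work is the sign bookkeeping. \textbf{Step 1: expand the two exponentials.} The pair $\omega_{h_1}^{(1,1)},\omega_{h_1}^{(2,0)}$ has precisely the shape of the two summands of $\eta_h$ from Section~\ref{sec:classification} — one lowers the $\theta$-degree by $1$ and raises the $\partial$-degree by $1$, the other lowers the $\theta$-degree by $2$ — so the combinatorial argument of Theorem~\ref{thm:cocycle}, which uses only graded commutativity of the $\theta$'s and $\partial$'s among themselves, applies verbatim and gives the analogue of \eqref{eq:kthpower}: $\dfrac{\omega_{h_1}^{k}(\theta_{I_{h_1}})}{k!}$ is a sum, over $p+q=k$ and over tuples of $a$-pairs $(i_s,j_s)$ and $b$-pairs, of terms
\[
(-1)^{\epsilon_p}\,a_{i_1j_1}\cdots a_{i_pj_p}\,b_{i_{p+1}i_{p+2}}\cdots b_{i_{p+2q-1}i_{p+2q}}\cdot
\frac{\partial^{\,p+2q}\theta_{I_{h_1}}}{\partial\theta_{j_1}\cdots\partial\theta_{j_p}\,\partial\theta_{i_{p+1}}\cdots\partial\theta_{i_{p+2q}}}\cdot
\partial_{i_1}\cdots\partial_{i_p},
\]
with $\epsilon_p$ as in \eqref{epsilonsign}, and similarly for $\omega_{h_2}$ with $c,d$ in place of $a,b$.

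\textbf{Step 2: which terms survive in the product.} A term of $\omega_{h_1}^{k}(\theta_{I_{h_1}})/k!$ has normal form $\theta_A\partial_B$ with $A,B\subseteq I_{h_1}$, and a term of $\omega_{h_2}^{l}(\theta_{I_{h_2}})/l!$ has normal form $\theta_{A'}\partial_{B'}$ with $A',B'\subseteq I_{h_2}$. Since $\theta_{I_{h_1h_2}}$ carries no $\partial$, the coefficient $\langle\,\cdot\,,\theta_{I_{h_1h_2}}\rangle$ can only be nonzero if every $\partial$ disappears after normal ordering $\theta_A\partial_B\cdot\theta_{A'}\partial_{B'}$. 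The block $\partial_{B'}$ sits at the far right, so $B'=\emptyset$ is forced; this kills the $c$-type contributions of $\omega_{h_2}$ (which is exactly why $c_{ij}$ does not occur in \eqref{eq:shkformula}) and leaves only the $d$-type ones. The block $\partial_B$ must then contract completely against $\theta_{A'}$, forcing $B\subseteq A'$ (in particular each index of $B$ lies in $I_{h_1}\cap I_{h_2}$) and producing $\pm\theta_{A'\setminus B}$; finally $\theta_A\cdot\theta_{A'\setminus B}$ contributes to $\theta_{I_{h_1h_2}}$ exactly when $A$ and $A'\setminus B$ are disjoint with union $I_{h_1h_2}$. Thus a surviving configuration on the left is a tuple of $a$-pairs $(i_s,j_s)$ (with $i_s\in I_{h_1}\cap I_{h_2}$, $j_s\in I_{h_1}$, $i_s\le j_s$), a tuple of $b$-pairs in $I_{h_1}$, and a tuple of $d$-pairs in $I_{h_2}$, such that removing the $j_s$ and the $b$-indices from $\theta_{I_{h_1}}$ and removing the $i_s$ and the $d$-indices from $\theta_{I_{h_2}}$ leaves complementary index sets whose union is $I_{h_1h_2}$.

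\textbf{Step 3: expand Shklyarov's side.} Expanding $\bigl(\sum a_{ij}\partial_j\otimes\partial_i+\sum b_{ij}\partial_i\partial_j\otimes 1+\sum d_{ij}\cdot 1\otimes\partial_i\partial_j\bigr)^n$ multinomially and regrouping the two tensor slots via \eqref{eq:thetatensorprod}, each term takes the form $(\mathrm{sign})\cdot(\mathrm{coeff})\cdot P_1(\partial)\otimes P_2(\partial)$, where $P_1$ is an ordered product of the $\partial_j$ from the $a$-pairs and the $\partial_i\partial_j$ from the $b$-pairs, and $P_2$ an ordered product of the $\partial_i$ from the $a$-pairs and the $\partial_i\partial_j$ from the $d$-pairs. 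Applying $\Upsilon$ yields $(-1)^{|\theta_{I_{h_1}}|\,|P_2|}\bigl(P_1\text{ acting on }\theta_{I_{h_1}}\bigr)\cdot\bigl(P_2\text{ acting on }\theta_{I_{h_2}}\bigr)$, and taking $\langle\,\cdot\,,\theta_{I_{h_1h_2}}\rangle$ imposes exactly the index constraints of Step~2: $P_1$ removes the $j_s$- and $b$-indices from $\theta_{I_{h_1}}$, $P_2$ removes the $i_s$- and $d$-indices from $\theta_{I_{h_2}}$, and the two remaining monomials must multiply to $\pm\theta_{I_{h_1h_2}}$. Hence the two sides are sums over the same set of configurations, with the same monomial coefficients $\prod a\,\prod b\,\prod d$.

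\textbf{Step 4: matching normalizations and signs — the main obstacle.} What remains is (i) to check the combinatorial weights agree — $\sum_{k+l=n}\frac1{k!\,l!}$ against $\frac1{n!}$ times the number of interleavings of the $a$-, $b$- and $d$-slots, handled exactly as the $1/k!$ was absorbed in \eqref{eq:kthpower} — and, the genuinely delicate point, (ii) to check that the signs coincide for each configuration. On the left the sign is the product of the $\epsilon_p$-sign from Step~1, the sign of the interior contraction $\partial_B\theta_{A'}$, and the shuffle sign reordering $\theta_A\,\theta_{A'\setminus B}$ into increasing order; on the right it is the product of the $(-1)^{k(k-1)/2}$ of \eqref{eq:thetatensorprod}, the $(-1)^{|q_1||p_2|}$ in the definition of $\Upsilon$, the signs of the two interior actions of $P_i$ on $\theta_{I_{h_i}}$, and a shuffle sign. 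The clean way to reconcile them is to normal-order everything against one fixed reference — list all removed $\theta$-indices and all surviving indices of $\theta_{I_{h_1h_2}}$ in increasing order on both sides — and then verify the two accumulated signs agree by induction on the number of pairs, peeling off one $a$-, $b$- or $d$-pair at a time, with base case the evident identity $\langle\theta_{I_{h_1}}\theta_{I_{h_2}},\theta_{I_{h_1h_2}}\rangle=\langle\Upsilon(1\otimes 1\otimes\theta_{I_{h_1}}\otimes\theta_{I_{h_2}}),\theta_{I_{h_1h_2}}\rangle$. I expect essentially all of the effort to be concentrated in step~(ii).
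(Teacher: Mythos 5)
Your proposal follows essentially the same route as the paper: expand both sides explicitly (the left side via the $\eqref{eq:kthpower}$-style symmetrization and the commutation of the $(1,1)$- and $(2,0)$-pieces, the right side via the multinomial expansion together with \eqref{eq:thetatensorprod} and the sign in $\Upsilon$), observe that the $c$-type terms of $\omega_{h_2}$ cannot contribute to the coefficient of $\theta_{I_{h_1h_2}}$, and match configurations. The only difference is that where you defer the sign check to an induction peeling off one pair at a time, the paper short-circuits it by writing out both expansions in normal form and noting that each carries the same accumulated sign $(-1)^{\frac{k_1(k_1-1)}{2}+k_1|I_{h_1}|}$, so the final comparison is by inspection.
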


\begin{proof}
\begin{align}
&\frac{1}{n!}\big( \sum_{\stackrel{i\leq j}{i,j\in I_{h_1}}} (a_{ij}\partial_j\otimes \partial_i+b_{ij} \partial_i\partial_j \otimes 1)+\sum_{\stackrel{i<j}{i,j\in I_{h_2}}}d_{ij} \cdot 1\otimes \partial_i\partial_j\big)^n\nonumber\\
=&\sum_{k+l=n}\;\;\sum_{k_1+k_2=k}\frac{1}{k_1!}{\sum\limits_{\stackrel{i_\bullet\leq j_\bullet}{i_\bullet,j_\bullet\in I_{h_1}}}(-1)^{\frac{k_1(k_1-1)}{2}}a_{i_1j_1}\cdots a_{i_{k_1}j_{k_1}}\partial_{j_1}\cdots\partial_{j_{k_1}}\otimes \partial_{i_1}\cdots\partial_{i_{k_1}}} \nonumber
\\
&\quad\quad\quad\quad\quad\quad
\cdot\frac{1}{k_2!}{\sum\limits_{\stackrel{i'_\bullet\leq j'_\bullet}{i'_\bullet,j'_\bullet\in I_{h_1}}}b_{i'_1j'_1}\cdots b_{i'_{k_2}j'_{k_2}}\partial_{i'_1}\partial_{j'_1}\cdots\partial_{i'_{k_2}}\partial_{j'_{k_2}}\otimes 1}  \nonumber
\\
&\quad\quad\quad\quad\quad\quad
\cdot\frac{1}{l!}{\sum\limits_{\stackrel{i''_\bullet\leq j''_\bullet}{i''_\bullet,j''_\bullet\in I_{h_2}}}d_{i''_1j''_1}\cdots d_{i''_{l}j''_{l}}\cdot 1\otimes\partial_{i''_1}\partial_{j''_1}\cdots\partial_{i''_{l}}\partial_{j''_{l}}}\nonumber\\
=&\sum_{\stackrel{k+l=n}{k_1+k_2=k}}{\sum\limits_{\stackrel{i_\bullet\leq j_\bullet}{i_\bullet,j_\bullet\in I_{h_1}}}\sum\limits_{\stackrel{i'_\bullet\leq j'_\bullet}{i'_\bullet,j'_\bullet\in I_{h_1}}}\sum\limits_{\stackrel{i''_\bullet\leq j''_\bullet}{i''_\bullet,j''_\bullet\in I_{h_2}}}\frac{(-1)^{\frac{k_1(k_1-1)}{2}}}{k_1!k_2!l!}a_{i_1j_1}\cdots a_{i_{k_1}j_{k_1}}b_{i'_1j'_1}\cdots b_{i'_{k_2}j'_{k_2}}d_{i''_1j''_1}\cdots d_{i''_{l}j''_{l}}} \nonumber
\\
&\quad\quad
\cdot \partial_{j_1}\cdots\partial_{j_{k_1}}\partial_{i'_1}\partial_{j'_1}\cdots\partial_{i'_{k_2}}\partial_{j'_{k_2}}\otimes \partial_{i_1}\cdots\partial_{i_{k_1}}\partial_{i''_1}\partial_{j''_1}\cdots \partial_{i''_l}\partial_{j''_l}. \label{eq:nthpower}
\end{align}
The sign is due to \eqref{eq:thetatensorprod}. Plugging \eqref{eq:nthpower} into \eqref{eq:shkformula}, we get
\begin{align*}
\eqref{eq:shkformula}&= \Big\langle\sum_{\stackrel{k+l=n}{k_1+k_2=k}} {\sum\limits_{\stackrel{i_\bullet\leq j_\bullet}{i_\bullet,j_\bullet\in I_{h_1}}}\sum\limits_{\stackrel{i'_\bullet\leq j'_\bullet}{i'_\bullet,j'_\bullet\in I_{h_1}}}\sum\limits_{\stackrel{i''_\bullet\leq j''_\bullet}{i''_\bullet,j''_\bullet\in I_{h_2}}}\frac{(-1)^{\frac{k_1(k_1-1)}{2}+k_1|I_{h_1}|}}{k_1!k_2!l!}a_{i_1j_1}\cdots a_{i_{k_1}j_{k_1}}b_{i'_1j'_1}\cdots b_{i'_{k_2}j'_{k_2}}d_{i''_1j''_1}\cdots d_{i''_{l}j''_{l}}} \\
&\quad\quad\quad
\cdot \frac{\partial^{k_1+2k_2}\theta_{I_{h_1}}}{\partial\theta_{j_1}\cdots\partial\theta_{j_{k_1}}
\partial\theta_{i'_1}\partial\theta_{j'_1}\cdots\partial\theta_{i'_{k_2}}\partial\theta_{j'_{k_2}}}
\cdot \frac{\partial^{k_1+2l}\theta_{I_{h_2}}}{\partial\theta_{i_1}\cdots\partial\theta_{i_{k_1}}\partial\theta_{i''_1}\partial\theta_{j''_1}\cdots \partial\theta_{i''_l}\partial\theta_{j''_l}}, \theta_{I_{h_1h_2}} \Big\rangle.
\end{align*}

By $\omega_{h_1}^{(1,1)}\omega_{h_1}^{(2,0)}=\omega_{h_1}^{(2,0)}\omega_{h_1}^{(1,1)}$ which is easy to verify, we have
\begin{align}
 \frac{1}{k!}{\omega_{h_1}^k(\theta_{I_{h_1}})} &= 
\sum\limits_{k_1+k_2=k}  \frac{(\omega_{h_1}^{(1,1)})^{k_1}\cdot(\omega_{h_1}^{(2,0)})^{k_2}}{k_1! k_2!}(\theta_{I_{h_1}})\nonumber\\
 &=\sum\limits_{k_1+k_2=k} \frac{1}{k_1!k_2!}\cdot(\omega_{h_1}^{(1,1)})^{k_1}
 \Big( \sum\limits_{\stackrel{i'_\bullet\leq j'_\bullet}{i'_\bullet,j'_\bullet\in I_{h_1}}}b_{i'_1j'_1}\cdots b_{i'_{k_2}j'_{k_2}}\frac{\partial^{2k_2}\theta_{I_{h_1}}}{\partial\theta_{i'_1}\partial\theta_{j'_1}\cdots\partial\theta_{i'_{k_2}}\partial\theta_{j'_{k_2}}} \Big)\nonumber\\
 &=\sum\limits_{k_1+k_2=k}\sum\limits_{\stackrel{i_\bullet\leq j_\bullet}{i_\bullet,j_\bullet\in I_{h_1}}} 
 \frac{(-1)^{\frac{k_1(k_1-1)}{2}+k_1|I_{h_1}|}}{k_1!k_2!}a_{i_1j_1}\cdots a_{i_{k_1}j_{k_1}}\frac{\partial^{k_1}\theta_{I_{h_1}}}{\partial\theta_{j_1}\cdots\partial\theta_{j_{k_1}}}\nonumber \\
 &\quad\quad\quad\quad\quad\quad\quad
  \cdot\partial_{i_1}\cdots\partial_{i_{k_1}}\Big( \sum\limits_{\stackrel{i'_\bullet\leq j'_\bullet}{i'_\bullet,j'_\bullet\in I_{h_1}}}b_{i'_1j'_1}\cdots b_{i'_{k_2}j'_{k_2}}\frac{\partial^{2k_2}\theta_{I_{h_1}}}{\partial\theta_{i'_1}\partial\theta_{j'_1}\cdots\partial\theta_{i'_{k_2}}\partial\theta_{j'_{k_2}}}\Big), \label{eq:omegah1}
\end{align}

\begin{equation}\label{eq:omegah2}
 \frac{1}{l!}{\omega_{h_2}^l(\theta_{I_{h_2}})}
=\sum\limits_{\stackrel{i''_\bullet\leq j''_\bullet}{i''_\bullet,j''_\bullet\in I_{h_2}}}
d_{i''_1j''_1}\cdots d_{i''_{l}j''_{l}}\cdot\frac{\partial^{2l}\theta_{I_{h_2}}}{\partial\theta_{i''_1}\partial\theta_{j''_1}\cdots \partial\theta_{i''_l}\partial\theta_{j''_l}}+\sum_{|I|+|J|=l,|J|\geq 1} f_I \frac{\partial^{2|I|}\theta_{I_{h_2}}}{\partial\theta_{i''_1}\cdots\partial\theta_{i''_{|I|}}}\partial_J.
\end{equation}
The differential operator terms 
$\sum\limits_{|I|+|J|=l,|J|\geq 1} f_I \frac{\partial^{2|I|}\theta_{I_{h_2}}}{\partial\theta_{i''_1}\cdots\partial\theta_{i''_{|I|}}}\partial_J$ 
in \eqref{eq:omegah2} does not contribute to \eqref{eq:matrixmulti}. The comparison of \eqref{eq:matrixmulti} and \eqref{eq:shkformula} is now straightforward.
\end{proof}

Finally, we are ready to prove our main theorem.
\begin{theorem}\label{thm:main}
Define an $R$-module homomorphism 
\begin{align*}
\Phi: \Jac'(W,G) & \to \bigoplus_{g\in G}\Hom(\Delta_1,\Delta_g),\\
 \xi_h & \mapsto [\exp(\eta_{h^{-1}})(\theta_{I_{h^{-1}}})].
 \end{align*}
Then 
\begin{enumerate}
\item $\Phi$ is $G$-equivariant.
\item $\Phi(\xi_{h_1} \bullet \xi_{h_2}) =h_2^{-1}\Phi(\xi_{h_1}) \cup \Phi(\xi_{h_2}).$
\end{enumerate}
\end{theorem}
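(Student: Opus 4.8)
The strategy is to verify the two assertions separately, treating (1) as a warm-up and reducing (2) to the combinatorial identity already packaged in Lemma~\ref{lem:mf-jac-comparison}. For part (1), $G$-equivariance, I would unwind both actions explicitly. On the source side the $G$-action on $\Jac'(W,G)$ sends $\xi_h$ to (a scalar multiple of) $\xi_{ghg^{-1}}=\xi_h$ since $G$ is abelian, so it suffices to track the character by which $g$ acts on $\Jac(W^h)\cdot\xi_h$; this is dictated by the grading $|\xi_h|=d_h$ and the diagonal weights $\rho(g)$ on $\theta_{I_h}$. On the target side, the $G$-action on $\bigoplus_g\Hom(\Delta_1,\Delta_g)$ is the one induced by \eqref{def:ggaction} restricted to the second factor, i.e. $h\mapsto \rho(h)$ acting on the $\theta$-variables, combined with the translation $\Delta_g\to\Delta_{hgh^{-1}}=\Delta_g$. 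One then checks that $\rho(g)$ applied to $\exp(\eta_{h^{-1}})(\theta_{I_{h^{-1}}})$ produces exactly the same character, because $\eta_{h^{-1}}$ is built from the equivariant data $g^{h^{-1}}_{ji},f^{h^{-1}}_{ji}$ and the bookkeeping of weights of $\partial\theta_i,\partial_j$ is compatible with that of $\theta_{I_{h^{-1}}}$. This is a direct weight computation with no real obstacle.

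For part (2) the heart of the matter is the product formula. By Lemma~\ref{lem:leadingterm} a class in $\Hom(\Delta_1,\Delta_{h_1^{-1}h_2^{-1}})$ is determined by its coefficient of $\theta_{I_{h_1^{-1}h_2^{-1}}}$, so I only need to compute that coefficient for $(h_2\!\cdot)_*\big(\exp(\eta_{h_1^{-1}})\theta_{I_{h_1^{-1}}}\big)\circ \exp(\eta_{h_2^{-1}})\theta_{I_{h_2^{-1}}}$ up to homotopy, and compare it with the structure constant $\sigma_{h_1,h_2}$ appearing in \eqref{eq:twjacprod}. The plan is: (i) using the Remark after \eqref{eq:cupprod}, write out $h_{2\ast}\big(\exp(\eta_{h_1^{-1}})\theta_{I_{h_1^{-1}}}\big)$ as $\exp$ of a twisted operator $\omega_{h_1}$ on $\Delta_{h_2^{-1}}$ obtained from $\eta_{h_1^{-1}}$ by the substitution $y\mapsto h_2^{-1}y$ and $\rho(h_2^{-1})$ on Clifford variables; (ii) express the composition $\circ$ of these two $\exp$'s as a single sum $\sum_{k+l=n}\frac{\omega_{h_1}^k(\theta_{I_{h_1^{-1}}})}{k!}\cdot\frac{\omega_{h_2}^l(\theta_{I_{h_2^{-1}}})}{l!}$ paired into $\theta_{I_{h_1^{-1}h_2^{-1}}}$, which is precisely the left-hand side \eqref{eq:matrixmulti} of Lemma~\ref{lem:mf-jac-comparison}; (iii) invoke that lemma to rewrite it as the $\Upsilon$-expression \eqref{eq:shkformula}; (iv) identify the resulting coefficients $a_{ij},b_{ij},d_{ij}$ with $\lfloor H_W(x,g\cdot x,x)\rfloor$, $\lfloor H_{W,g}\rfloor$ and $\lfloor H_{W,h}(g\cdot x)\rfloor$ after the relevant substitutions, so that \eqref{eq:shkformula} becomes exactly $\sigma_{h_1,h_2}$; (v) account for the overall factor $h_2^{-1}$ on the right by comparing the two $G$-actions, using part (1).

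The main obstacle I anticipate is step (iv): matching, on the nose, the difference-quotient polynomials $g^{h_1^{-1}}_{ji},f^{h_1^{-1}}_{ji},g^{h_2^{-1}}_{ji}$ (evaluated on the twisted diagonal, i.e. after substituting $y=x$ or $y=h_2^{-1}y$ as appropriate) with the entries $\nabla_i^{y\to(y,z)}\nabla_j^{x\to(x,y)}W$, $\tfrac{1}{1-g_i}\nabla_i^{x\to(x,x^g)}\nabla_j^{x\to(x,g\cdot x)}W$, and the corresponding pieces of $H_{W,h}(g\cdot x)$ in Shklyarov's formula \eqref{eq:twjacprod}. These identifications are exactly the content of restricting our multiplication to the twisted diagonal, and each requires a careful $G$-invariance argument in the spirit of \eqref{eq:bartilde}--\eqref{eq:Winv}. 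I also expect a secondary bookkeeping obstacle in step (v): the factors $\frac{1}{d_{g,h}!}$ versus $\frac{1}{n!}$ must be reconciled by observing that only the summand with $k+l$ equal to $d_{h_1,h_2}$ survives after projecting to $\Jac(W^{h_1^{-1}h_2^{-1}})$ (terms with too many $\partial$'s annihilate $\theta_{I_{h_1^{-1}h_2^{-1}}}$, and terms with too few do not reach it), and that the multinomial coefficient collapses the remaining sum to the stated normalization. Once these matchings are in place, part (2) follows by combining Lemmas~\ref{lem:leadingterm} and~\ref{lem:mf-jac-comparison} with part (1).
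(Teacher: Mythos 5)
Your plan follows essentially the same route as the paper's proof: part (1) is the same direct weight computation using $G$-invariance of $W$ and $I_{h^{-1}}=I_h$, and part (2) reduces via Lemma~\ref{lem:leadingterm} to the coefficient of $\theta_{I_{h_2^{-1}h_1^{-1}}}$, rewrites the composition in the form of Lemma~\ref{lem:mf-jac-comparison}, and matches coefficients with Shklyarov's formula. Be aware that your step (iv), which you correctly flag as the main obstacle, is where the paper does its real work (its Lemmas~\ref{lem:gij} and~\ref{lem:fij}, each a careful substitution argument using \eqref{eq:jacrelation} and $x_i^{h_1}=0$), and your step (v) is resolved exactly as you predict: only the power $k_1+k_2=d_{h_1,h_2}$ survives, so the normalizations agree.
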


\begin{proof}
Recall that 
\[ h'\cdot \xi_h= \prod_{l\in I_h} (h'_l)^{ -1} \cdot \xi_h,\]
where the scalar $h'_l \in k^*$ is defined by $h'\cdot x_l=h'_l x_l$.

On a summand $\Hom(\Delta_1,\Delta_{h^{-1}})$,
\[h'\cdot \theta_{I_{h^{-1}}}= \prod_{l\in I_{h^{-1}}} (h'_l)^{-1}\cdot \theta_{I_{h^{-1}}},\] 
but it is clear that $I_{h^{-1}}=I_h$. Furthermore,
\begin{align*}
 h'\cdot (g_{ji}^{h^{-1}} \theta_{I_{h^{-1}}\setminus \{i\}}\partial_j)&= (h'\cdot g_{ji}^{h^{-1}})\rho(h')(\theta_{I_{h^{-1}}\setminus \{i\}}\partial_j)\\
 &=h'\cdot\Big(\frac{(\barW^{h^{-1}}_{j,i}-\barW^{h^{-1}}_{j-1,i})-(\barW^{h^{-1}}_{j,i-1}-\barW^{h^{-1}}_{j-1,i-1})}{(y_j-x_j) (x_i-{h^{-1}}x_i)}\Big)\rho(h')(\theta_{I_{h^{-1}}\setminus \{i\}}\partial_j)\\
&=(h'_j h'_i)^{-1}g_{ji}^{h^{-1}}\cdot\frac{\prod_{l\in I_{h^{-1}}} (h'_l)^{-1}}{(h'_i h'_j)^{-1}}\theta_{I_{h^{-1}}\setminus \{i\}}\partial_j \\
&=\prod_{l\in I_{h^{-1}}} (h'_l)^{-1}\cdot g_{ji}^{h^{-1}}\theta_{I_{h^{-1}}\setminus \{i\}}\partial_j,
\end{align*}
where the third equality comes from $G$-invariance of $W$. The same argument shows that 
\[ h'\cdot (f_{ji}^{h^{-1}}\theta_{I_{h^{-1}}\setminus\{j,i\}})=\prod_{l\in I_{h^{-1}}} (h'_l)^{-1}\cdot f_{ji}^{h^{-1}} \theta_{I_{h^{-1}}\setminus\{j,i\}}.\]
By induction, we conclude that 
\[ h'\cdot \exp(\eta_{h^{-1}})(\theta_{I_{h^{-1}}})=\prod_{l\in I_{h^{-1}}} (h'_l)^{-1}\cdot \exp(\eta_{h^{-1}})(\theta_{I_{h^{-1}}}),\]
and it finishes the proof of the first assertion.

Now we proceed to the second part. Without loss of generality, let
\[ I_{h_1}=I_{h_1^{-1}}=\{1,\cdots,j\}, \quad I_{h_2}=I_{h_2^{-1}}=\{i+1,\cdots,k,k+1,\cdots,j,j+1,\cdots,m\}\]
such that 
$I_{h_1} \cap I_{h_2}=\{i+1,\cdots,k,k+1,\cdots,j\}$ and $h_1\cdot x_l=h_2^{-1}\cdot x_l$ for $k+1\leq l \leq j$. Then
\[ I_{h_1 h_2}=I_{h_2^{-1}h_1^{-1}}=\{1,\cdots,k,j+1,\cdots,m\}\]
and
\[ d_{h_1,h_2}= \frac{d_{h_1}+d_{h_2}-d_{h_1 h_2}}{2}=j-\frac{k+i}{2}=j-k+\frac{k-i}{2}.\]

By Lemma \ref{lem:leadingterm}, the coefficient 
$\big\langle  (h_2^{-1})_* \big(h_2^{-1}\cdot \exp(\eta_{h_1^{-1}})(\theta_{I_{h_1^{-1}}})\big) \cdot \exp(\eta_{h_2^{-1}})(\theta_{I_{h_2^{-1}}}), \theta_{I_{h_2^{-1}h_1^{-1}}}\big\rangle$ 
completely determines the product $[h_2^{-1}\cdot\exp(\eta_{h_1^{-1}})(\theta_{I_{h_1^{-1}}})] \cup [\exp(\eta_{h_2^{-1}})(\theta_{I_{h_2^{-1}}})]$. 
Recall that 
\[\eta_h(\theta_I \partial_J)=  \sum (-1)^{|I|} g^h_{ji} \frac{\partial\theta_I}{\partial\theta_i}\partial_j\partial_J+\sum f^h_{ji}\frac{\partial^2 \theta_I}{\partial\theta_j\partial\theta_i}\partial_J.\]
Decompose $\eta_h$ into following two maps.
\[ \eta_h^{1,1}(\theta_I \partial_J):=  \sum (-1)^{|I|} g^h_{ji} \frac{\partial\theta_I}{\partial\theta_i}\partial_j\partial_J,\]
\[ \eta_h^{2,0}(\theta_I \partial_J):=\sum f^h_{ji}\frac{\partial^2 \theta_I}{\partial\theta_j \partial\theta_i}\partial_J.\]
It is evident that it suffices to consider $\exp(\eta_{h_2^{-1}}^{2,0})(\theta_{I_{h_2^{-1}}})$ only, because differential operator summands(i.e. products of $\partial_{j}$'s) with $\exp(\eta_{h_2^{-1}})(\theta_{I_{h_2^{-1}}})$ still remain after left multiplication of $(h_2^{-1})_* \big(h_2^{-1}\cdot \exp(\eta_{h_1^{-1}})(\theta_{I_{h_1^{-1}}})\big)$, so they do not contribute to the coefficient of $\theta_{I_{h_2^{-1} h_1^{-1}}}$. 
On the other hand, differential operator summands of $(h_2^{-1})_* \big(h_2^{-1}\cdot \exp(\eta_{h_1^{-1}})(\theta_{I_{h_1^{-1}}})\big)$ can act on $\theta$-variables of $\exp(\eta_{h_2^{-1}})(\theta_{I_{h_2^{-1}}})$, so we need to consider whole $(h_2^{-1})_* \big(h_2^{-1}\cdot \exp(\eta_{h_1^{-1}})(\theta_{I_{h_1^{-1}}})\big)$, not only a part of it.

Recall that
\[ \frac{(\eta_{h_2^{-1}}^{2,0})^{k_2}}{k_2!}(\theta_{I_{h_2^{-1}}})= \sum f^{h_2^{-1}}_{i_1 i_2}\cdots f^{h_2^{-1}}_{i_{2k_2-1}i_{2k_2}} \frac{\partial^{2k_2}\theta_{I_{h_2^{-1}}}}{\partial\theta_{i_1}\partial\theta_{i_2}\cdots\partial\theta_{i_{2k_2-1}}\partial\theta_{i_{2k_2}}}.\]
Since 
\[ \theta_{I_{h_1^{-1}}}=\theta_1\cdots \theta_k \theta_{k+1}\cdots\theta_j, \quad \theta_{I_{h_2^{-1}}}=\theta_{i+1}\cdots\theta_k\theta_{k+1}\cdots\theta_j\theta_{j+1}\cdots\theta_m,\quad \theta_{I_{(h_1h_2)^{-1}}}=\theta_1\cdots\theta_k \theta_{j+1}\cdots\theta_m,\]
$\theta_{I_{(h_1h_2)^{-1}}}$-summand of the product comes from differentiating all $\theta_{k+1},\cdots,\theta_j$, differentiating each of $\theta_{i+1},\cdots,\theta_k$ only once in $\theta_{I_{h_1^{-1}}}$ and $\theta_{I_{h_2^{-1}}}$, and then multiplying them. When $\eta_{h_1^{-1}}$ and $\eta_{h_2^{-1}}$ are applied to $\theta_I$, they differentiate $\theta$-variables twice, or differentiate once and give birth to a degree 1 differential operator. We conclude that
\begin{equation}\label{eq:productnonzero}
\big\langle  (h_2^{-1})_* \big(h_2^{-1}\cdot \exp(\eta_{h_1^{-1}})(\theta_{I_{h_1^{-1}}})\big) \cdot \exp(\eta_{h_1^{-1}})(\theta_{I_{h_1^{-1}}}), \theta_{I_{h_2^{-1}h_1^{-1}}}\big\rangle \neq 0
\end{equation}
only if $k_1+k_2=j-k+\frac{k-i}{2}=d_{h_1,h_2}$. In particular, 
\[[h_2^{-1}\cdot\exp(\eta_{h_1^{-1}})(\theta_{I_{h_1^{-1}}})] \cup [\exp(\eta_{h_2^{-1}})(\theta_{I_{h_2^{-1}}})]\neq 0\]
only if $d_{h_1,h_2}\in \Z$.

We calculate the product $\xi_{h_1}\bullet \xi_{h_2}$, namely the coefficient of $\theta_{I_{h_1 h_2}}$ in the following expression
\begin{equation}\label{eq:jacprod}
\frac{\Upsilon\big( (\lfloor H_W(x,h_1\cdot x,x)\rfloor_{h_1 h_2}+\lfloor H_{W,h_1}(x)\rfloor_{h_1 h_2}\otimes 1 + 1\otimes \lfloor H_{W,h_2}(h_1\cdot x)\rfloor_{h_1h_2})^{d_{h_1,h_2}}\otimes {\theta_{I_{h_1}}} \otimes {\theta_{I_{h_2}}}\big)}{d_{h_1,h_2}!}.
\end{equation}

Recall that $H_W(x,y,z)$ is defined by
\[ H_W(x,y,z)=\sum_{1\leq i\leq j \leq n} \nabla_i^{y \to (y,z)}\nabla_j^{x\to (x,y)}(W)\partial_j\otimes\partial_i.\]
We compute each summand more carefully as follows.
\begin{itemize}
\item If $i< j$, then 
\begin{align}
&\lfloor\nabla_i^{y \to (y,z)}\nabla_j^{x\to (x,y)}(W) (x,h_1\cdot x,x)\rfloor_{h_1 h_2}\nonumber\\
=&  \Big\lfloor\frac{\big(W(x_1,\cdots,x_{i-1},h_1x_i,\cdots,h_1x_{j-1},x_j,\cdots,x_n)-W(x_1,\cdots,x_i,h_1x_{i+1},\cdots,h_1x_{j-1},x_j,\cdots,x_n)\big)}{(x_i-h_1x_i)\cdot (h_1x_j-x_j)}\nonumber \\
&\quad -\frac{\big(W(x_1,\cdots,x_{i-1},h_1x_i,\cdots,h_1x_j,x_{j+1},\cdots,x_n)-W(x_1,\cdots,x_i,h_1x_{i+1},\cdots,h_1x_j,x_{j+1},\cdots,x_n)\big)}{(x_i-h_1x_i)\cdot (h_1x_j-x_j)}\Big\rfloor_{h_1h_2}. \label{eq:Hwij}
\end{align}

\item If $i=j$, then
\begin{align}
&\lfloor\nabla_i^{y \to (y,z)}\nabla_i^{x\to (x,y)}(W) (x,h_1\cdot x,x)\rfloor_{h_1h_2}\nonumber\\
=&  \Big\lfloor\frac{\frac{W(z_1,\cdots,z_{i-1},x_i,\cdots,x_n)-W(z_1,\cdots,z_{i-1},y_i,x_{i+1},\cdots,x_n)}{x_i-y_i}  -\frac{W(z_1,\cdots,z_{i-1},x_i,\cdots,x_n)-W(z_1,\cdots,z_i,x_{i+1},\cdots,x_n)}{x_i-z_i}}{y_i-z_i}|_{y=h_1 \cdot x, z=x}\Big\rfloor_{h_1h_2}.\label{eq:Hwii}
\end{align}


\end{itemize}

\begin{lemma}\label{lem:gij}
\[ \eqref{eq:Hwij}= \lfloor(1\times h_2)\cdot h_2^{-1}\cdot g_{ij}^{h_1^{-1}}|_{y=(h_1h_2)^{-1}x}\rfloor_{h_1 h_2}, \]
\[\eqref{eq:Hwii}= \lfloor(1\times h_2)\cdot h_2^{-1}\cdot g_{ii}^{h_1^{-1}}|_{y=(h_1h_2)^{-1}x}\rfloor_{h_1 h_2}.\]
\end{lemma}

\begin{proof}[Proof of the Lemma]
\begin{align*}
&\lfloor(1\times h_2)\cdot h_2^{-1}\cdot g_{ij}^{h_1^{-1}}|_{y=h_2^{-1}h_1^{-1}x}\rfloor_{h_1h_2} \\
=& \lfloor(h_2^{-1}\times 1)\cdot g_{ij}^{h_1^{-1}}|_{y=(h_1h_2)^{-1}x}\rfloor_{h_1h_2} \\
=&\Big\lfloor \frac{1}{(h_2^{-1}h_1^{-1}x_i-h_2^{-1}x_i)(h_2^{-1}x_j-h_2^{-1}h_1^{-1}x_j)}\\
&\cdot\big(W(h_2^{-1}h_1^{-1}x_1,\cdots,h_2^{-1}h_1^{-1}x_i,h_2^{-1}x_{i+1},\cdots,h_2^{-1}x_j,h_2^{-1}h_1^{-1}x_{j+1},\cdots,h_2^{-1}h_1^{-1}x_n)\\
&\quad-W(h_2^{-1}h_1^{-1}x_1,\cdots,h_2^{-1}h_1^{-1}x_{i-1},h_2^{-1}x_i,\cdots,h_2^{-1}x_j,h_2^{-1}h_1^{-1}x_{j+1},\cdots,h_2^{-1}h_1^{-1}x_n) \\
&\quad-W(h_2^{-1}h_1^{-1}x_1,\cdots,h_2^{-1}h_1^{-1}x_i,h_2^{-1}x_{i+1},\cdots,h_2^{-1}x_{j-1},h_2^{-1}h_1^{-1}x_{j},\cdots,h_2^{-1}h_1^{-1}x_n)\\
&\quad+W(h_2^{-1}h_1^{-1}x_1,\cdots,h_2^{-1}h_1^{-1}x_{i-1},h_2^{-1}x_i,\cdots,h_2^{-1}x_{j-1},h_2^{-1}h_1^{-1}x_{j},\cdots,h_2^{-1}h_1^{-1}x_n)\big)\Big\rfloor_{h_1h_2} \\
=&\Big\lfloor \frac{1}{(x_i-h_1x_i)(h_1x_j-x_j)}\\
&\cdot\big(W(x_1,\cdots,x_i,h_1x_{i+1},\cdots,h_1x_j,x_{j+1},\cdots,x_n)
-W(x_1,\cdots,x_{i-1},h_1x_i,\cdots,h_1x_j,x_{j+1},\cdots,x_n) \\
&\quad-W(x_1,\cdots,x_i,h_1x_{i+1},\cdots,h_1x_{j-1},x_{j},\cdots,x_n)
+W(x_1,\cdots,x_{i-1},h_1x_i,\cdots,h_1x_{j-1},x_{j},\cdots,x_n)\big)\Big\rfloor_{h_1h_2}.
\end{align*}
The last identity is due to the following: for any polynomial $f(x_1,\cdots,x_n)$,
\begin{equation}\label{eq:jacrelation}
 \lfloor f(x_1,\cdots,x_n) \rfloor_{h_1h_2}= \lfloor f(h_1h_2x,\cdots,h_1h_2x_n)\rfloor_{h_1h_2}.
 \end{equation}
The proof of the second identity is similar so omitted.
\end{proof}

Next we consider 
\[ H_{W,h_1}=\sum_{i,j\in I_{h_1},i<j}\frac{1}{1-(h_1)_i}\nabla_i^{x \to (x,x^{h_1})}\nabla_j^{x \to (x,h_1\cdot x)}(W)\partial_i \partial_j.\]
Again, a summand can be written as
\begin{align}
&\Big\lfloor \frac{1}{1-(h_1)_i}\nabla_i^{x \to (x,x^{h_1})}\nabla_j^{x \to (x,h_1\cdot x)}(W) \Big\rfloor_{h_1h_2}\nonumber\\
=& \Big\lfloor \frac{1}{(1-(h_1)_i)(x_i^{h_1}-x_i)(h_1x_j-x_j)}\nonumber\\
&\cdot \big(W(x_1^{h_1},\cdots,x_i^{h_1},h_1x_{i+1},\cdots,h_1x_j,x_{j+1},\cdots,x_n)
-W(x_1^{h_1},\cdots,x_i^{h_1},h_1x_{i+1},\cdots,h_1x_{j-1},x_j,\cdots,x_n)\nonumber\\
&\quad-W(x_1^{h_1},\cdots,x_{i-1}^{h_1},h_1x_{i},\cdots,h_1x_j,x_{j+1},\cdots,x_n)
+W(x_1^{h_1},\cdots,x_{i-1}^{h_1},h_1x_{i},\cdots,h_1x_{j-1},x_j,\cdots,x_n) \big)\Big\rfloor_{h_1h_2}\nonumber \\  
=&\Big\lfloor \frac{1}{(h_1x_i-x_i)(h_1x_j-x_j)}\nonumber\\
&\cdot \big(W(x_1^{h_1},\cdots,x_i^{h_1},h_1x_{i+1},\cdots,h_1x_j,x_{j+1},\cdots,x_n)
-W(x_1^{h_1},\cdots,x_i^{h_1},h_1x_{i+1},\cdots,h_1x_{j-1},x_j,\cdots,x_n)\nonumber\\
&\quad-W(x_1^{h_1},\cdots,x_{i-1}^{h_1},h_1x_{i},\cdots,h_1x_j,x_{j+1},\cdots,x_n)
+W(x_1^{h_1},\cdots,x_{i-1}^{h_1},h_1x_{i},\cdots,h_1x_{j-1},x_j,\cdots,x_n) \big)\Big\rfloor_{h_1h_2}. \label{eq:Hh1}
\end{align}
For the last identity, we used $x_i^{h_1}=0$. We also observe that
\begin{align}
\lfloor H_{W,h_2}(h_1\cdot x)\rfloor_{h_1h_2}=&\Big\lfloor \frac{1}{1-(h_2)_i}\big(\nabla_i^{x \to (x,x^{h_2})}\nabla_j^{x \to (x,h_2\cdot x)}(W)\big)(h_1\cdot x) \Big\rfloor_{h_1h_2}\nonumber\\
=& \Big\lfloor \frac{1}{(h_1h_2x_i-h_1x_i)(h_1h_2x_j-h_1x_j)}\nonumber\\
&\cdot \big(W(h_1x_1^{h_2},\cdots,h_1x_i^{h_2},h_1h_2x_{i+1},\cdots,h_1h_2x_j,h_1x_{j+1},\cdots,h_1x_n)\nonumber\\
&\quad-W(h_1x_1^{h_2},\cdots,h_1x_i^{h_2},h_1h_2x_{i+1},\cdots,h_1h_2x_{j-1},h_1x_j,\cdots,h_1x_n)\nonumber\\
&\quad-W(h_1x_1^{h_2},\cdots,h_1x_{i-1}^{h_2},h_1h_2x_{i},\cdots,h_1h_2x_j,h_1x_{j+1},\cdots,h_1x_n)\nonumber\\
&\quad+W(h_1x_1^{h_2},\cdots,h_1x_{i-1}^{h_2},h_1h_2x_{i},\cdots,h_1h_2x_{j-1},h_1x_j,\cdots,h_1x_n) \big)\Big\rfloor_{h_1h_2} \nonumber\\  
=&\Big\lfloor \frac{1}{(x_i-h_2^{-1}x_i)(x_j-h_2^{-1}x_j)}\nonumber\\
&\cdot \big(W(h_2^{-1}x_1^{h_2},\cdots,h_2^{-1}x_i^{h_2},x_{i+1},\cdots,x_j,h_2^{-1}x_{j+1},\cdots,h_2^{-1}x_n)\nonumber\\
&\quad-W(h_2^{-1}x_1^{h_2},\cdots,h_2^{-1}x_i^{h_2},x_{i+1},\cdots,x_{j-1},h_2^{-1}x_j,\cdots,h_2^{-1}x_n)\nonumber\\
&\quad-W(h_2^{-1}x_1^{h_2},\cdots,h_2^{-1}x_{i-1}^{h_2},x_{i},\cdots,x_j,h_2^{-1}x_{j+1},\cdots,h_2^{-1}x_n)\nonumber\\
&\quad+W(h_2^{-1}x_1^{h_2},\cdots,h_2^{-1}x_{i-1}^{h_2},x_{i},\cdots,x_{j-1},h_2^{-1}x_j,\cdots,h_2^{-1}x_n) \big)\Big\rfloor_{h_1h_2}\nonumber\\
=&\Big\lfloor \frac{1}{(x_i-h_2^{-1}x_i)(x_j-h_2^{-1}x_j)}\nonumber\\
&\cdot \big(W(x_1^{h_2},\cdots,x_i^{h_2},x_{i+1},\cdots,x_j,h_2^{-1}x_{j+1},\cdots,h_2^{-1}x_n)\nonumber\\
&\quad-W(x_1^{h_2},\cdots,x_i^{h_2},x_{i+1},\cdots,x_{j-1},h_2^{-1}x_j,\cdots,h_2^{-1}x_n)\nonumber\\
&\quad-W(x_1^{h_2},\cdots,x_{i-1}^{h_2},x_{i},\cdots,x_j,h_2^{-1}x_{j+1},\cdots,h_2^{-1}x_n)\nonumber\\
&\quad+W(x_1^{h_2},\cdots,x_{i-1}^{h_2},x_{i},\cdots,x_{j-1},h_2^{-1}x_j,\cdots,h_2^{-1}x_n) \big)\Big\rfloor_{h_1h_2}.\label{eq:Hh2}
\end{align}
The second identity comes from 
\[ \lfloor f(x_1,\cdots,x_n)\rfloor_{h_1h_2}=\lfloor f(h_2^{-1}h_1^{-1}x_1,\cdots,h_2^{-1}h_1^{-1}x_n)\rfloor_{h_1h_2}\]
for any polynomial $f(x_1,\cdots,x_n)$.
 The following lemma can be proved similarly as Lemma \ref{lem:gij}.
\begin{lemma}\label{lem:fij}
\[ \eqref{eq:Hh1}= \lfloor(1\times h_2)\cdot h_2^{-1}\cdot f_{ij}^{h_1^{-1}}\rfloor_{h_1 h_2}  ,\quad \eqref{eq:Hh2}= \lfloor f_{ij}^{h_2^{-1}}\rfloor_{h_1h_2}. \]
\end{lemma}

By Lemma \ref{lem:gij} and \ref{lem:fij}, we can rewrite \eqref{eq:jacprod} as
\begin{align*}
 \frac{1}{d_{h_1,h_2}!} &\cdot
\Upsilon\Big( \big(\sum_{i,j\in I_{h_1},i\leq j}(\lfloor(1\times h_2)\cdot h_2^{-1}\cdot g_{ij}^{h_1^{-1}}|_{y=(h_1h_2)^{-1}x}\rfloor_{h_1 h_2}\cdot\partial_j\otimes\partial_i \\
&\quad\quad+\sum_{i,j\in I_{h_1},i<j}\lfloor(1\times h_2)\cdot h_2^{-1}\cdot f_{ij}^{h_1^{-1}}\rfloor_{h_1 h_2}\cdot \partial_i \partial_j\otimes 1 )\\
&\quad\quad+ \sum_{i,j\in I_{h_2},i<j}  \lfloor f_{ij}^{h_2^{-1}}\rfloor_{h_1h_2}\cdot 1\otimes \partial_i \partial_j \big)^{d_{h_1,h_2}}\otimes {\theta_{I_{h_1}}} \otimes {\theta_{I_{h_2}}}\Big).
\end{align*}
The proof is finally finished by applying Lemma \ref{lem:mf-jac-comparison}.
\end{proof}

\begin{corollary}
$\displaystyle\bigoplus_{g\in G} \Hom(\Delta_1,\Delta_g)$ is braided super-commutative in the following sense:
\begin{equation*}
[\exp(\eta_g)(\theta_{I_g})] \cup [\exp(\eta_h)(\theta_{I_h})] = (g\cdot [\exp(\eta_h)(\theta_{I_h})]\big) \cup [\exp(\eta_g)(\theta_{I_g})].
\end{equation*}
\end{corollary}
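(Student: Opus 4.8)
The plan is to \emph{transport} Shklyarov's braided super-commutativity of $\Jac'(W,G)$ (Theorem~\ref{thm:shkcomm}) through the isomorphism $\Phi$ of Theorem~\ref{thm:main}. Write $e_g:=[\exp(\eta_g)(\theta_{I_g})]\in\Hom(\Delta_1,\Delta_g)$, so that by definition $\Phi(\xi_h)=e_{h^{-1}}$ and, by Corollary~\ref{cor:rank1rep}, the $e_g$ generate $\bigoplus_{g\in G}\Hom(\Delta_1,\Delta_g)$ over the subrings $\Jac(W^g)$; thus the asserted identity, which involves only these generators, is exactly the image under $\Phi$ of a relation in $\Jac'(W,G)$.

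First I would apply $\Phi$ to the identity $\xi_g\bullet\xi_h=(-1)^{|\xi_g||\xi_h|}\,\xi_h\bullet h^{-1}\xi_g$ of Theorem~\ref{thm:shkcomm}. For the left-hand side, part~(2) of Theorem~\ref{thm:main} gives $\Phi(\xi_g\bullet\xi_h)=(h^{-1}\cdot e_{g^{-1}})\cup e_{h^{-1}}$, and part~(1) ($G$-equivariance of $\Phi$) together with the explicit character $h'\cdot\xi_g=\prod_{l\in I_g}(h'_l)^{-1}\xi_g$ recorded in the proof of Theorem~\ref{thm:main} rewrites $h^{-1}\cdot e_{g^{-1}}=\big(\prod_{l\in I_g}h_l\big)\,e_{g^{-1}}$. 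For the right-hand side, I would first use the same character to turn $h^{-1}\xi_g$ into the scalar multiple $\big(\prod_{l\in I_g}h_l\big)\xi_g$, then apply part~(2) again to $\xi_h\bullet\xi_g$ to get $\Phi(\xi_h\bullet\xi_g)=(g^{-1}\cdot e_{h^{-1}})\cup e_{g^{-1}}$. Comparing the two sides, the common nonzero scalar $\prod_{l\in I_g}h_l$ cancels and one is left with
\[ e_{g^{-1}}\cup e_{h^{-1}}=(-1)^{|\xi_g||\xi_h|}\,(g^{-1}\cdot e_{h^{-1}})\cup e_{g^{-1}}. \]
Relabelling $g\mapsto g^{-1}$, $h\mapsto h^{-1}$ — legitimate since $I_{g^{-1}}=I_g$ and $d_{g^{-1}}=d_g$, so that $e_{g^{-1}}$ and $|\xi_{g^{-1}}|$ become $e_g$ and $|\xi_g|$ — yields precisely the stated identity, carrying along the braiding sign $(-1)^{|\xi_g||\xi_h|}=(-1)^{d_gd_h}$ of Theorem~\ref{thm:shkcomm}.

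The only real work is the bookkeeping: one must track the one-dimensional $G$-characters $\prod_{l\in I_\bullet}(\cdot)_l$ attached both to the $\xi_g$ (as in Definition~\ref{def:ojr}) and to the $e_g$ (as in the proof of Theorem~\ref{thm:main}(1)), verify that they cancel exactly, and keep the order of the two factors straight through the substitution $g\leftrightarrow g^{-1}$, $h\leftrightarrow h^{-1}$. All of these manipulations are already packaged in Theorems~\ref{thm:shkcomm} and~\ref{thm:main}, so the argument is routine but sign- and index-sensitive; the point to watch is the compatibility between the $G$-action appearing in part~(2) of Theorem~\ref{thm:main} (which twists the \emph{first} factor of $\cup$) and the action displayed on the right-hand side of the corollary. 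Alternatively, one can bypass the relabelling by observing, via Lemma~\ref{lem:leadingterm}, that a class in $\Hom(\Delta_1,\Delta_{gh})$ is determined by its $\theta_{I_{gh}}$-coefficient, so it suffices to check equality of those coefficients, which Theorem~\ref{thm:main}(2) reduces directly to the $\Jac'$-identity of Theorem~\ref{thm:shkcomm}.
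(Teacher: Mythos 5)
Your transport argument is exactly the intended proof: the paper states this corollary without further argument as an immediate consequence of Theorem \ref{thm:shkcomm} and Theorem \ref{thm:main}, and your bookkeeping of the characters $\prod_{l\in I_g}h_l$ (which indeed appear identically on both sides via Theorem \ref{thm:main}(1) and cancel) and of the relabelling $g\leftrightarrow g^{-1}$, $h\leftrightarrow h^{-1}$ (legitimate since $I_{g^{-1}}=I_g$) is correct. The only point of friction is that your derivation produces the braiding sign $(-1)^{|\xi_g||\xi_h|}=(-1)^{d_g d_h}$ on the right-hand side, which is absent from the corollary's displayed equation; since ``braided super-commutative'' requires that sign and it is precisely what Theorem \ref{thm:shkcomm} transports to, the displayed equation appears to have dropped it, and your signed version is the one that actually follows.
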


\section{Application: Lagrangian Floer theory}\label{sec:application}
The original motivation of this work was to enhance the understanding of Floer theory for representing objects of homological mirror functors. We briefly review how we obtain a mirror algebraic function (often called a {\em potential}) of a symplectic manifold. Let $M$ be a symplectic manifold and $\bL\subset M$ be a Lagrangian submanifold. 
Suppose that $\bL$ is weakly unobstructed, i.e. there is a space $\mathcal{MC}(\bL)$ of cochains of $\bL$ such that the $\AI$-structure $\{m_k\}_{k\geq 0}$ can be deformed by elements of $\mathcal{MC}(\bL)$, so that the obstruction for $m_1^2  =0$ vanishes. Elements of $\mathcal{MC}(\bL)$ are called {\em weak bounding cochains}. The potential $W_\bL:\MC(\bL) \to k$ is given by counting holomorphic discs whose boundaries are on $\bL$ and decorated by weak bounding cochains. Suppose that $0$ is the critical value of $W_\bL$. Then we can define an $\AI$-functor
\[ \CF^\bL: Fuk(M) \to MF(W_\bL)\]
by the curved Yoneda functor induced by $\bL$. (For detail see e.g. \cite{CHL1}, in which $\CF^\bL$ was named as {\em localized mirror functor.})

Our primary examples are orbifold spheres $\PP^1_{a,b,c}$ which are quotients of Riemann surfaces by finite groups. On $\PP^1_{a,b,c}$, there is an immersed Lagrangian $\bar{\bL}$ which is called the Seidel Lagrangian (see \cite{Sei}). It was proved in \cite{CHL1} that $\bar{\bL}$ is weakly unobstructed, and the localized mirror functor $\CF^{\bar{\bL}}$ is an equivalence if $\PP^1_{a,b,c}$ is elliptic or hyperbolic. Furthermore, $\MC(\bar{\bL})$ consists of linear combinations of three odd degree immersed generators. Hence the potential $W_{\bar{\bL}}$ is an element of $k\llbracket x,y,z \rrbracket$. If $\PP^1_{a,b,c}$ is spherical or elliptic, then $W_{\bar{\bL}}$ is in $k[x,y,z]$.

Let us focus on the following three elliptic orbispheres (see Figure \ref{fig:236333} and Figure \ref{fig:244}).
\[\PP^1_{2,3,6}=T^2/ (\Z/6\Z), \quad \PP^1_{2,4,4}=T^2/ (\Z/4\Z), \quad \PP^1_{3,3,3}=T^2/(\Z/3\Z).\]
$\PP^1_{2,3,6}$ and $\PP^1_{3,3,3}$ are quotients of $\C/(\Z \oplus e^{2\pi i/3}\Z)$ by $\Z/6\Z$ and $\Z/3\Z$ respectively, and $ \PP^1_{2,4,4}$ is a quotient of $\C/(\Z \oplus i\Z)$ by $\Z/4\Z$. In Figure \ref{fig:236333} and Figure \ref{fig:244}, odd degree immersed generators are denoted by $X_1$, $X_2$ and $X_3$, shaded regions are fundamental domains of orbifolds, black dots are orbifold points with orders of isotropy groups specified, and dotted curves on tori are unions of embedded circles which are preimages of Seidel Lagrangians (which are dotted curves inside fundamental domains of orbifolds) via quotient maps.
 
\begin{figure}
\includegraphics[height=1.9in]{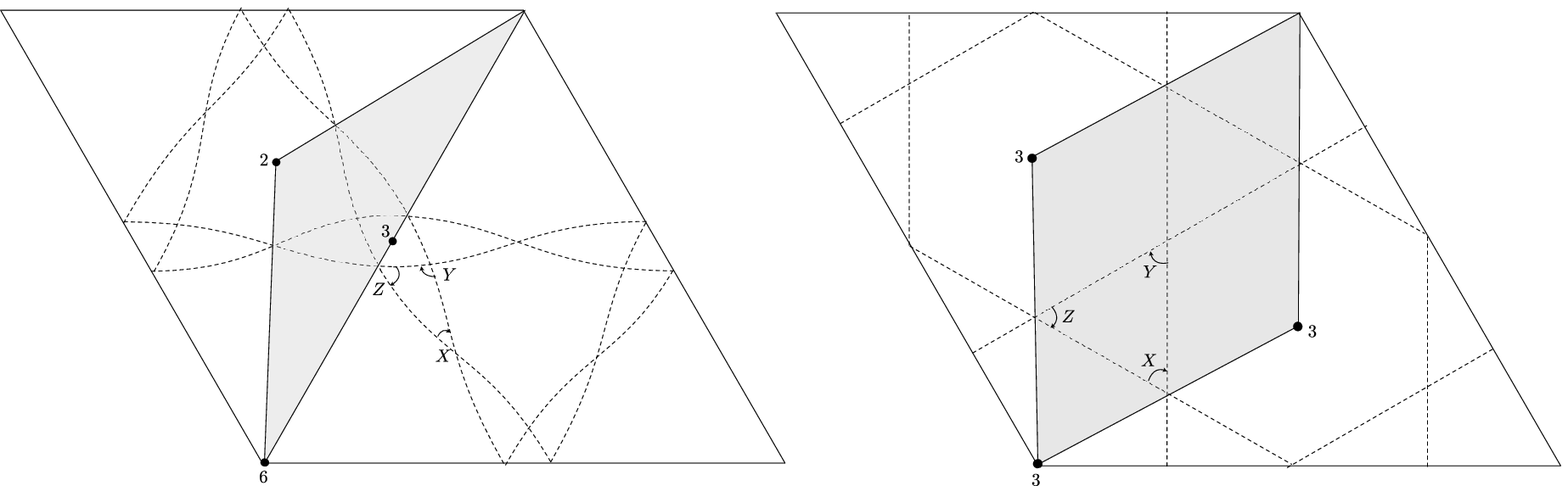}
\caption{$\PP^1_{2,3,6}$ and $\PP^1_{3,3,3}$.}
\label{fig:236333}
\end{figure}

\begin{figure}
\includegraphics[height=1.7in]{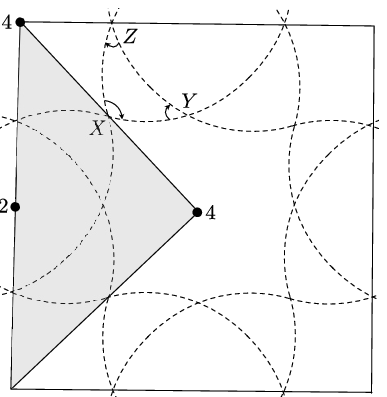}
\caption{$\PP^1_{2,4,4}$.}
\label{fig:244}
\end{figure}
 Let $W_{2,3,6}$, $W_{3,3,3}$ and $W_{2,4,4}$ be potentials from Seidel Lagrangians in $\PP^1_{2,3,6}$, $\PP^1_{3,3,3}$ and $ \PP^1_{2,4,4}$ respectively. They were computed in \cite{CHL1,CHKL} as follows.
 \begin{theorem}[\cite{CHL1,CHKL}]
 \begin{align*}
W_{2,3,6}&= q^6 x_1^2-qx_1x_2x_3+a_1x_2^3+a_2x_3^6+a_3 x_2^2 x_3^2+a_4x_2 x_3^4,\\
W_{3,3,3}&= b_1(x_1^3+x_2^3+x_3^3)-b_2x_1x_2x_3, \\
W_{2,4,4}&=q^6 x_1^2-qx_1x_2x_3+c_1x_2^4+c_2x_3^4+c_3x_2^2 x_3^2,
\end{align*}
where $a_1,a_2,a_3,a_4,b_1,b_2,c_1,c_2,c_3$ are elements in the Novikov field $k=\Lambda$ ($q$ is the Novikov variable). We omit their precise expressions.
\end{theorem}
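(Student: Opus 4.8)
The plan is to compute each $W_{\bar{\bL}}$ directly from its defining property as a generating function of holomorphic polygons bounded by the Seidel Lagrangian, following the disc-counting methods of \cite{CHL1,CHKL}. Writing $b=x_1X_1+x_2X_2+x_3X_3$ for the general element of $\MC(\bar{\bL})$ --- the $X_i$ being the odd-degree immersed generators at the three self-intersection points of $\bar{\bL}$ --- the potential is characterised by $\sum_{k\geq 0}m_k(b,\dots,b)=W_{\bar{\bL}}(x_1,x_2,x_3)\cdot\mathbf{1}_{\bar{\bL}}$. Hence the coefficient of $x_1^{a_1}x_2^{a_2}x_3^{a_3}$ is a signed, area-weighted count of rigid holomorphic polygons in $\PP^1_{a,b,c}$ with boundary on $\bar{\bL}$, having exactly $a_i$ convex corners at $X_i$ and output the unit $\mathbf{1}_{\bar{\bL}}$; each such polygon contributes $\pm q^{A}$ where $A$ is its symplectic area in suitably normalised units.

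First I would transfer the enumeration to the flat torus via the covering $p\colon T^2\to\PP^1_{a,b,c}$. Since $p^{-1}(\bar{\bL})$ is a union of embedded geodesic circles for the flat metric, every contributing polygon lifts to an immersed flat polygon with vertices over $X_1,X_2,X_3$, possibly enclosing preimages of cone points; a polygon encircling a cone point of order $m$ once is precisely one with $m$ corners at a single generator, which is the geometric origin of the monomials $x_i^{m_i}$. One then lists, up to the deck group, all such flat polygons: the minimal triangle with one corner at each $X_i$, responsible for the term $-q\,x_1x_2x_3$ (the sign here normalising the orientation and relative spin conventions); the polygons wrapping once around each cone point, giving $x_1^{a_1},x_2^{a_2},x_3^{a_3}$; and the finitely many larger polygons clustering near the highest-order cone point, which produce the correction terms $x_2^2x_3^2,\ x_2x_3^4$ for $\PP^1_{2,3,6}$ and $x_2^2x_3^2$ for $\PP^1_{2,4,4}$. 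For each polygon I would read the power of $q$ off its area and the sign off the orientation and spin structure on $\bar{\bL}$, then sum over polygons within each monomial class; the $\Z/3$-symmetry of $\PP^1_{3,3,3}$ collapses the coefficients of $x_1^3,x_2^3,x_3^3$ to a common value $b_1$.

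The hard part will be exhaustiveness: proving that no further rigid polygon contributes --- in particular that polygons wrapping a cone point more than the minimal number of times, or combining wrappings around several cone points beyond those listed, fail to be rigid or simply do not exist. This is where ellipticity of the orbisphere enters decisively: it forces the symplectic form to be positive on all the flat pieces cut out by $p^{-1}(\bar{\bL})$, so that each extra corner or extra wrapping strictly increases the area while the rigidity (Maslov index $2$) constraint bounds it from above, leaving only a finite list --- and in particular making $W_{\bar{\bL}}$ a genuine polynomial rather than a power series. A secondary, bookkeeping-heavy step is to fix all signs consistently from a single choice of orientation and spin structure on $\bar{\bL}$; once the $x_1x_2x_3$-sign is normalised to $-q$ the remaining signs are forced, and one recovers the stated expressions for $W_{2,3,6}$, $W_{3,3,3}$ and $W_{2,4,4}$.
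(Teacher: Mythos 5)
This statement is imported verbatim from \cite{CHL1,CHKL}; the paper under review gives no proof of it, so your proposal can only be measured against the disc-counting computations in those references. Your general framework is the right one (characterise $W_{\bar{\bL}}$ by $\sum_k m_k(b,\dots,b)=W_{\bar{\bL}}(x)\cdot \mathbf{1}_{\bar{\bL}}$, lift to the flat torus, enumerate immersed polygons with prescribed corners, weight by area and sign), and the $\Z/3$-symmetry argument for $\PP^1_{3,3,3}$ is correct.

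There is, however, a genuine gap in your finiteness argument. You claim that rigidity bounds the area from above, ``leaving only a finite list'' of contributing polygons and making $W_{\bar{\bL}}$ ``a genuine polynomial rather than a power series.'' This is not how the elliptic case works: for a fixed corner word (say one corner at each of $X_1,X_2,X_3$) there are \emph{infinitely} many rigid holomorphic polygons, obtained by wrapping further around the torus, with areas going to infinity. Consequently each coefficient $a_i,b_i,c_i$ is an \emph{infinite} series in the Novikov variable $q$ (theta-function-like series in the cited computations) --- which is exactly why the statement calls them ``elements in the Novikov field'' and omits their expressions. The finiteness that does hold is finiteness of the set of $x$-monomials, and it is not obtained from an area bound but from a weighted-degree constraint: the potential is weighted-homogeneous with respect to the fractional gradings of the immersed generators $X_i$ on the elliptic orbisphere, which pins down the admissible exponent vectors $(a_1,a_2,a_3)$ to the short list appearing in the statement. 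Your exhaustiveness step therefore needs to be replaced by this degree argument, and your per-monomial counts need to be reorganised as sums over the infinite families of polygons in each class (this is the bulk of the work in \cite{CHKL}). The remaining bookkeeping of signs via orientation and spin structure is as you describe.
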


We state a technical assumption which we need.

\begin{assumption}\label{as:floergen}
\begin{itemize}
\item $\MC(W_{\bar{\bL}})$ consists of linear combinations of $\{e_1,\cdots,e_n\}\subset CF^1(\bar{\bL},\bar{\bL})$.
\item Let $R$ be the function ring on $\MC(W_{\bar{\bL}})$, i.e. $R=k[x_1,\cdots,x_n]$. The Floer complex $CF(\bar{\bL},\bar{\bL})\otimes R$ is generated by following elements
\[ e_I^{\bar{b}}:=m_2^{\bar{b}}(e_{i_1},m_2^{\bar{b}}(e_{i_2},m_2^{\bar{b}}(\cdots,e_{i_k}),\cdots)),\]
where $m_2^{\bar{b}}$ is the binary $\AI$-operation deformed by
\[ {\bar{b}}=x_1e_1+\cdots+x_n e_n.\]
\end{itemize}
\end{assumption}
It is reasonable to predict that any Lagrangian submanifold which represents a mirror equivalence satisfies this assumption and it will be studied elsewhere. Now we recall some results.
\begin{theorem}[\cite{CL}]
\begin{itemize}
\item Any Seidel Lagrangian $\bar{\bL}\subset \PP^1_{a,b,c}$ satisfy Assumption \ref{as:floergen}.
\item If a weakly unobstructed Lagrangian $(\bL,b) \subset M$ satisfies Assumption \ref{as:floergen}, then 
\[ HF\big((\bL,b),(\bL,b)\big)\otimes R \cong \Jac(W_\bL).\]
\item If $\bar{\bL} \subset M/G$ satisfies Assumption \ref{as:floergen} and $\bL\subset M$ is the preimage of $\bar{\bL}$ via quotient, then 
\[ HF\big((\bL,b),(\bL,b)\big)\otimes R \cong \Hom_{MF_{1\times \HG}(W_{\bar{\bL}}(y)-W_{\bar{\bL}}(x))}(\Delta_{W_{\bar{\bL}}}^{\HG\times\HG},\Delta_{W_{\bar{\bL}}}^{\HG\times \HG}),\]
where $b$ is a lift of $\bar{b}$.
\end{itemize}
\end{theorem}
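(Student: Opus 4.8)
These three assertions are results of \cite{CL}; I outline how I would establish each, using freely the categorical description of $\Jac'(W,G)$ developed in Sections~\ref{sec:prelim}--\ref{sec:comparemulti}. \emph{First bullet.} For the Seidel Lagrangian $\bar{\bL}\subset\PP^1_{a,b,c}$ this is a finite disc/polygon computation. One first checks, as in \cite{CHL1}, that deforming the curved $A_\infty$ algebra of $\bar{\bL}$ by $\bar b=x_1X_1+x_2X_2+x_3X_3$ produces $m_0^{\bar b}=W_{\bar{\bL}}(x_1,x_2,x_3)\cdot\mathbf 1$, and that every weak bounding cochain is gauge equivalent to such a linear combination by a degree-and-area argument; this gives the first item of Assumption~\ref{as:floergen}. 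For the second item one verifies, by listing the immersed polygons in the orbisphere, that the finite-dimensional complex $CF(\bar{\bL},\bar{\bL})\otimes R$ is spanned over $R$ by $\mathbf 1$ and the iterated products $e_I^{\bar b}$: for instance $m_2^{\bar b}(X_i,X_j)$ with $i\neq j$ recovers the even immersed generators and a triple product recovers the point class.

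\emph{Second bullet.} Here the tool is the localized mirror functor $\CF^\bL\colon Fuk(M)\to MF(W_\bL)$. Weak unobstructedness gives $m_0^b=W_\bL\cdot\mathbf 1$, so $(m_1^{b,0})^2=W_\bL\cdot\id$ while $(m_1^{b,b})^2=0$; thus $\CF^\bL(\bL)=(CF(\bL,\bL)\otimes R,\,m_1^{b,0})$ is a matrix factorization of $W_\bL$, and $(CF(\bL,\bL)\otimes R,\,m_1^{b,b})$ is a complex whose cohomology is $HF((\bL,b),(\bL,b))\otimes R$. The plan is: (i) the $A_\infty$ functor $\CF^\bL$ induces a product-compatible chain map from the latter complex into $\hom_{MF(W_\bL)}(\CF^\bL(\bL),\CF^\bL(\bL))$; (ii) Assumption~\ref{as:floergen} forces this map to be a quasi-isomorphism, the generation-by-products hypothesis being exactly what prevents any spurious (co)homology; (iii) identify the endomorphism cohomology with $\Jac(W_\bL)$. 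For (iii) I would use that the representing Lagrangian split-generates $Fuk(M)$ (true for the Seidel Lagrangian in an elliptic orbisphere, \cite{CHL1}), so that $\hom_{MF(W_\bL)}(\CF^\bL(\bL),\CF^\bL(\bL))$ computes $HH^*(MF(W_\bL))\cong\Jac(W_\bL)$ as recalled in Section~\ref{sec:prelim}; alternatively one replays the Koszul--Clifford computation of Lemma~\ref{lemma:homjac} directly on $\CF^\bL(\bL)$.

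\emph{Third bullet.} This is the $\HG$-equivariant refinement, and it is the input to the second theorem of the introduction. Write $\pi\colon M\to M/G$ and $\bL=\pi^{-1}(\bar{\bL})$. The $G$-action on $M$ equips $CF(\bL,\bL)\otimes R$ with a $\HG$-action, and pushing holomorphic polygons forward along $\pi$ exhibits $CF(\bL,\bL)\otimes R$ as a sum, indexed by $G$, of copies of the $\bar{\bL}$-Floer complex deformed by $\bar b$ and twisted by a group element. The plan is to match this, via the $\HG$-equivariant localized mirror functor of \cite{CL}, with the endomorphism complex of the kernel $\Delta^{\HG\times\HG}_{W_{\bar{\bL}}}$ for $\mathrm{Id}_{MF_{\HG}(W_{\bar{\bL}})}$: by the corollary following Lemma~\ref{lem:1gequivendo} the $(1\times\HG)$-equivariant endomorphisms of that kernel split as $\bigoplus_{g}\hom_{MF(W_{\bar{\bL}}(y)-W_{\bar{\bL}}(x))}(\Delta_1,\Delta_g)$, matching the summandwise description of the Floer complex. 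One then checks, using Lemma~\ref{lem:1gequivendo}, the $\HG$-grading \eqref{def:ggaction}, and the product \eqref{eq:cupprod}, that the $\HG$-actions and the algebra structures agree on both sides, which yields the stated algebra isomorphism.

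\emph{Main obstacle.} The delicate point is step (ii)/(iii) of the second bullet and its equivariant analogue: one must show that Assumption~\ref{as:floergen} is precisely what is needed for the $A_\infty$ functor to carry the deformed Floer complex to a \emph{quasi-isomorphic} copy of the mirror endomorphism complex, rather than merely mapping to it. The remainder is bookkeeping --- matching signs and, in the third bullet, aligning the $\rho(h)$-twisted $\HG$-grading of \eqref{def:ggaction} with the splitting of $CF(\pi^{-1}(\bar{\bL}),\pi^{-1}(\bar{\bL}))$ coming from the branches of $\pi$.
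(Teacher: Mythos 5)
The paper offers no proof of this statement: it is quoted verbatim from \cite{CL} and used as a black box, so there is no in-paper argument to compare your sketch against. Judged on its own terms, your outline correctly names the main actors (the localized mirror functor, the generation hypothesis of Assumption \ref{as:floergen}, the $(1\times\HG)$-equivariant endomorphisms of the diagonal kernel), but the one step you defer --- why Assumption \ref{as:floergen} upgrades the comparison map to a quasi-isomorphism --- is the entire mathematical content of the cited result, so as a proof the proposal is not yet one.

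Beyond that admitted gap there is a concrete error in step (iii) of your second bullet. The cohomology of $\hom_{MF(W_\bL)}(\CF^\bL(\bL),\CF^\bL(\bL))$ does \emph{not} compute $HH^*(MF(W_\bL))$: even when $\CF^\bL(\bL)$ is a compact generator (quasi-isomorphic to the Koszul stabilization of the residue field), Hochschild cohomology is $HH^*$ \emph{of} its endomorphism dg-algebra, not the cohomology of that endomorphism complex, which for the Koszul generator is a Clifford algebra of rank $2^n$ rather than $\Jac(W_\bL)$. Split-generation of $Fuk(M)$ by $\bL$ is likewise not the relevant hypothesis. The correct mirror counterpart of the fully deformed complex $(CF(\bL,\bL)\otimes R,\,m_1^{b,b})$ is the endomorphism complex of the \emph{diagonal kernel} $\Delta_{W_\bL}$ inside $MF(W_\bL(y)-W_\bL(x))$ --- note the two sets of variables, matching the two deformed inputs --- whose cohomology is $\Jac(W_\bL)$ by the Koszul computation of Lemma \ref{lemma:homjac}; this is exactly the shape of the right-hand side in the third bullet. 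So your step (i) should target $\hom_{MF(W(y)-W(x))}(\Delta_{W_\bL},\Delta_{W_\bL})$ rather than the one-variable endomorphism algebra of the image object, and then the second bullet becomes the $G=1$ case of the third.
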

Combining this with Theorem \ref{thm:main}, we can find a quite interesting algebraic structure on $\bL$ as follows: 
\begin{theorem}\label{thm:floerjac}
For each pair $(T^2,G)$ where $G$ is one of $\Z/6\Z$, $\Z/4\Z$ or $\Z/3\Z$, consider the immersed Lagrangian submanifold $\bL \subset T^2$ together with a cochain 
\[b=x_1(\sum_{g\in G} g\cdot X_1)+x_2(\sum_{g\in G} g\cdot X_2)+x_3(\sum_{g\in G} g\cdot X_3) \in CF^1(\bL,\bL)\otimes R.\]
Then we have an algebra isomorphism
\[HF\big((\bL,b),(\bL,b)\big)\otimes R \cong \Jac'(W_{\bar{\bL}},\HG).\] 
\end{theorem}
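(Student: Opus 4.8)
The plan is to obtain Theorem \ref{thm:floerjac} as a formal concatenation of three ingredients: the algebra isomorphism of Theorem \ref{thm:main} (together with the module identification established at the end of Section \ref{sec:prelim}), the Floer-theoretic comparison quoted from \cite{CL} just above, and a check that each pair $(T^2,G)$ at hand really produces a Landau--Ginzburg orbifold to which Theorem \ref{thm:main} applies. First I would record that Theorem \ref{thm:main} upgrades the module isomorphism
\[
\Hom_{MF_{1\times G}(W(y)-W(x))}(\Delta_W^{G\times G},\Delta_W^{G\times G})\cong \Jac'(W,G)
\]
to an isomorphism of algebras: the map $\Phi$ sends $\xi_h$ to the class of $\exp(\eta_{h^{-1}})(\theta_{I_{h^{-1}}})$, which by Corollary \ref{cor:rank1rep} and Lemma \ref{lem:leadingterm} generates $\Hom(\Delta_1,\Delta_{h^{-1}})$ as a rank-one free $\Jac(W^{h^{-1}})$-module, so $\Phi$ is bijective; parts (1) and (2) of Theorem \ref{thm:main} then say $\Phi$ intertwines $\bullet$ with $\cup$ (up to the $G$-twist built into both), which is precisely the matching of the two algebra structures. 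One bookkeeping point to spell out here is that the $G$-action used in \eqref{def:ggaction} is the one implicit in the definition of $\cup$, which is the content of Lemma \ref{lem:1gequivendo} and \eqref{eq:translation}.

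Next I would invoke the three bulleted results of \cite{CL} quoted above. The first bullet asserts that every Seidel Lagrangian $\bar{\bL}\subset \PP^1_{a,b,c}$ satisfies Assumption \ref{as:floergen}; in particular this holds for the three elliptic cases $\PP^1_{2,3,6}=T^2/(\Z/6\Z)$, $\PP^1_{2,4,4}=T^2/(\Z/4\Z)$, $\PP^1_{3,3,3}=T^2/(\Z/3\Z)$. Taking $M=T^2$, $G$ the corresponding cyclic group, and $\bL=\pi^{-1}(\bar{\bL})$ the preimage of the Seidel Lagrangian under the quotient map (this is exactly the immersed Lagrangian of the statement, with $b$ the stated lift of $\bar b=x_1X_1+x_2X_2+x_3X_3$), the third bullet provides an algebra isomorphism
\[
HF\big((\bL,b),(\bL,b)\big)\otimes R \;\cong\; \Hom_{MF_{1\times \HG}(W_{\bar{\bL}}(y)-W_{\bar{\bL}}(x))}\big(\Delta_{W_{\bar{\bL}}}^{\HG\times\HG},\Delta_{W_{\bar{\bL}}}^{\HG\times \HG}\big).
\]
It then remains to check the hypotheses of Theorem \ref{thm:main} for the pair $(W_{\bar{\bL}},\HG)$: for each of the three orbispheres the potential $W_{2,3,6},W_{3,3,3},W_{2,4,4}$ is an honest polynomial in $R=k[x_1,x_2,x_3]$ with an isolated critical point at the origin of critical value $0$ (finite-dimensionality of $\Jac(W_{\bar{\bL}})$ can be read off from the explicit leading terms, or deduced from the finite-dimensionality of $HF\big((\bL,b),(\bL,b)\big)$), and $\HG$ is a finite cyclic, hence finite abelian, group acting diagonally on $R$ and fixing $W_{\bar{\bL}}$, as is visible from the listed monomials and the dual symmetry of the mirror construction. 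Composing the two displayed isomorphisms with the algebra isomorphism of the first paragraph yields $HF\big((\bL,b),(\bL,b)\big)\otimes R\cong \Jac'(W_{\bar{\bL}},\HG)$.

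The main obstacle is not a single deep computation but the compatibility of conventions between the two sources: one must verify that the category $MF_{1\times\HG}(W_{\bar{\bL}}(y)-W_{\bar{\bL}}(x))$ and the kernel $\Delta_{W_{\bar{\bL}}}^{\HG\times\HG}$ produced by the Floer-theoretic machinery of \cite{CL} coincide --- as $(1\times\HG)$-equivariant objects, including the equivariant structure \eqref{def:ggaction} --- with those fixed in Section \ref{sec:prelim}, and that the product induced on Floer cohomology matches $\cup$ under this identification. Once $\HG$, its diagonal weights on $(x_1,x_2,x_3)$, and the sign and twist conventions are pinned down (this is where one must be careful about, e.g., whether $\HG$ or its Pontryagin dual is acting, and about the degree shifts $|\xi_h|=d_h \bmod 2$), the rest is a formal chaining of isomorphisms. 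A secondary, purely routine point is confirming the isolated-singularity hypothesis for each of the three explicit potentials, which can be checked by hand.
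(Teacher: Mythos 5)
Your proposal is correct and matches the paper's own argument: the theorem is obtained exactly by chaining the algebra isomorphism of Theorem \ref{thm:main} with the isomorphism $HF\big((\bL,b),(\bL,b)\big)\otimes R \cong \Hom_{MF_{1\times \HG}}(\Delta_{W_{\bar{\bL}}}^{\HG\times\HG},\Delta_{W_{\bar{\bL}}}^{\HG\times\HG})$ quoted from \cite{CL}, the latter applying because the Seidel Lagrangians satisfy Assumption \ref{as:floergen}. The additional hypothesis checks you flag (isolated singularity, diagonal action, compatibility of conventions) are reasonable due diligence but do not change the route.
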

Theorem \ref{thm:floerjac} implies that $HF\big((\bL,b),(\bL,b)\big)\otimes R$ is a braided super-commutative algebra.

\begin{remark}
Any twisted Jacobian algebra has a sector decomposition indexed by group elements, so $HF\big((\bL,b),(\bL,b)\big)\otimes R$ also has such a decomposition. It is given by an $\AI$ quasi-isomorphism as follows.
\begin{align}
 \Phi: CF\big((\bar{\bL},\bar{b}),(\bar{\bL},\bar{b})\big) \rtimes \HG \;\simeq\; & CF\big((\bL,b),(\bL,b)\big),\label{eq:Phi}\\
 v\otimes \chi \; \mapsto \;& \sum_{g\in G} \chi(g^{-1})(g\cdot v). \nonumber
\end{align}
The $\AI$-structure on $ CF\big((\bar{\bL},\bar{b}),(\bar{\bL},\bar{b})\big) \rtimes \HG$ is addressed in \cite[Section 6]{CL}. In the following example we will use this identification to describe Floer cohomology elements for $(\bL,b) \subset \PP^1_{2,3,6}$.
\end{remark}

\subsection{An example}
In the remainder, let us illustrate the structure of $\Jac'(W_{2,3,6},\Z/6\Z)$. For convenience let $W:=W_{2,3,6}$.
Denote elements of $\Z/6\Z$ by $\{\rho_0,\rho_1,\cdots,\rho_5\}$, and let $\Z/6\Z$ act on $k[x_1,x_2,x_3]$ by
\[ \rho_l \cdot x_1= e^{\pi li} x_1, \quad \rho_l \cdot x_2=e^{2\pi li/3}x_2 ,\quad \rho_l\cdot x_3=e^{\pi li/3}x_3.\]
Then we have
\[ I_{\rho_0}=\emptyset,\; I_{\rho_1}=I_{\rho_5}=\{1,2,3\},\; I_{\rho_2}=I_{\rho_4}=\{2,3\},\; I_{\rho_3}=\{1,3\} \] 
and
\[ W^{\rho_0}=W,\;\; W^{\rho_1}=W^{\rho_5}=0\in k,\] 
\[W^{\rho_2}=W^{\rho_4}=q^6 x_1^2\in k[x_1],\;\; W^{\rho_3}=c_{x_2}(q)x_2^3\in k[x_2].\]
Therefore, the underlying module structure of $\Jac'(W,\Z/6\Z)$ is as follows.
\[ \Jac'(W,\Z/6\Z)\cong\Jac(W)\oplus k \xi_{\rho_1}\oplus k \xi_{\rho_2} \oplus \big(k[x_2]/(x_2^2)\big) \xi_{\rho_3} \oplus k \xi_{\rho_4}\oplus k \xi_{\rho_5}.\]

It already tells a lot about the structure of $HF\big((\bL,b),(\bL,b)\big)\otimes R$. Its summand generated by the unit $1_{\bL}$ is isomorphic to $\Jac(W)$. It has 5 more sectors indexed by $\rho_1,\cdots,\rho_5 \in \Z/6\Z$. 
To describe those Floer cohomology elements more precisely, we refer to some previous works. 
Let us take an example $\xi_{\rho_2}$, where $I_{\rho_2}=\{2,3\}$. By combining Proposition 4.8 and Proposition 9.1 of \cite{CL}, we conclude that $\xi_{\rho_2}$ corresponds to the Floer cohomology element
\[\Phi(m_2(X_2,X_3))=\Phi(c\cdot \bar{X_1})+\alpha \in HF\big((\bL,b),(\bL,b)\big)\otimes R\] 
for some $c\in k$, where $\alpha$ is the sum of lower degree elements. $\xi_{\rho_1}$ corresponds to
\[ \Phi(m_2(X_1,m_2(X_2,X_3))=\Phi(c'\cdot pt_\bL)+\alpha' \in HF\big((\bL,b),(\bL,b)\big)\otimes R\] 
for some $c'$ and $\alpha'$. Similarly for other generators, $\xi_{\rho_4}$ correspond to Floer cocycles whose leading order cochains are $\Phi(\bar{X_1})$, and $\xi_{\rho_3}$ corresponds to the Floer cocycle whose leading order cochain is $\Phi(\bar{X_2})$.
 
We can check that $d_{\rho_0,\rho_i}=0$ ($i=1,2,3$), $d_{\rho_1,\rho_5}=d_{\rho_5,\rho_1}=3$, $d_{\rho_2,\rho_4}=d_{\rho_4,\rho_2}=d_{\rho_3,\rho_3}=2$ are only integers among $d_{g,h}$ for $g,h\in \Z/6\Z$. By $d_{\rho_0,\rho_i}=0$, we have a few straightforward computations: \[ \xi_{\rho_0} \bullet \xi_{\rho_i}=\xi_{\rho_i}.\]

Computation of $\xi_{\rho_1}\bullet \xi_{\rho_5}$, $\xi_{\rho_3}\bullet \xi_{\rho_3}$ and $\xi_{\rho_2}\bullet \xi_{\rho_4}$ comes from the formula \eqref{eq:twjacprod}. By braided super-commutativity, we easily deduce that 
\begin{equation} \label{eq:commutativity}
 \xi_{\rho_1}\bullet \xi_{\rho_5}=-\xi_{\rho_5}\bullet \xi_{\rho_1}, \quad 
\xi_{\rho_2}\bullet \xi_{\rho_4}= \xi_{\rho_4}\bullet (\rho_4)^{-1}\xi_{\rho_2}=\xi_{\rho_4}\bullet \xi_{\rho_2}.
\end{equation}
Actually, we can identify $\Z/6\Z$-invariant elements of $\Jac'(W,\Z/6\Z)$ as images of elements in $H^*(T^2)$ via {\em Kodaira-Spencer map}, which is a ring isomorphism. In particular, two odd degree elements $\xi_{\rho_1}$ and $\xi_{\rho_5}$ are $\Z/6\Z$-invariant elements and correspond to degree 1 cohomology classes of $T^2$ via Kodaira-Spencer map, so it is straightforward that their product is nontrivial and skew-symmetric. We refer to \cite{CL} for the detail.

Let us illustrate the computation of $\xi_{\rho_2}\bullet \xi_{\rho_4}$ which does not exist in the orbifold Jacobian ring. To use the formula \eqref{eq:twjacprod}, we first compute the following polynomials which are given by applying difference operators twice to $W$.
\begin{align*}
\nabla_{11}W:=\nablayz{1}\nablaxy{1} W=&q^6,\\
\nabla_{12}W:=\nablayz{1}\nablaxy{2} W=& -qx_3, \\
\nabla_{22}W:=\nablayz{2}\nablaxy{2}W=&a_1(z_2+y_2+x_2)+a_3x_3^2,\\
\nabla_{13}W:=\nablayz{1}\nablaxy{3}W=& -qy_2, \\
\nabla_{23}W:=\nablayz{2}\nablaxy{3}W=& -qy_1+a_3(y_3+z_3)(z_2+y_2)+a_4(y_3+x_3)(y_3^2+x_3^2),\\
\nabla_{33}W:=\nablayz{3}\nablaxy{3}W=& a_2 \frac{z_3^5-y_3^5}{z_3-y_3}+a_2x_3\frac{z_3^4-y_3^4}{z_3-y_3}+(a_2x_3^2+a_4y_2)\frac{z_3^3-y_3^3}{z_3-y_3}+(a_2x_3^3+a_4y_2x_3)\frac{z_3^2-y_3^2}{z_3-y_3}\\
&+a_2x_3^4+a_3y_2^2+a_4y_2x_3^2.
\end{align*}
Then we deduce that
\begin{align*}
 \sigma_{\rho_2,\rho_4}=&\frac{1}{2}\Upsilon \Big(\big(\lfloor \nabla_{22}W(x,\rho_2 x,x) \rfloor\partial_2\otimes\partial_2
 +\lfloor\nabla_{23}W(x,\rho_2 x,x) \rfloor\partial_3\otimes\partial_2 
 +\lfloor\nabla_{33}W(x,\rho_2 x,x) \rfloor\partial_3\otimes\partial_3\\
 &\quad\quad +\big\lfloor\frac{\nabla_{23}W(x,\rho_2 x,x^{\rho_2})}{1-(\rho_2)_2} \big\rfloor\partial_2\partial_3 \otimes 1
 +\big\lfloor\frac{\nabla_{23}W(\rho_2 x,\rho_4\rho_2 x,\rho_2(x^{\rho_4}))}{1-(\rho_4)_2} \big\rfloor1\otimes\partial_2\partial_3\big)^2 \otimes \theta_2\theta_3 \otimes \theta_2\theta_3\Big)\\
 =& \frac{1}{2}\Upsilon \Big(\big( \lfloor\nabla_{22}W(x,\rho_2 x,x)\rfloor\partial_2\otimes\partial_2 
 +\lfloor\nabla_{23}W(x,\rho_2 x,x)\rfloor\partial_3\otimes\partial_2 
 +\lfloor\nabla_{33}W(x,\rho_2 x,x)\rfloor\partial_3\otimes\partial_3\big)^2 \otimes  \theta_2\theta_3 \otimes \theta_2\theta_3\Big)\\
 =& -\Upsilon\big((\lfloor\nabla_{22}W(x,\rho_2 x,x)\cdot\nabla_{33}W(x,\rho_2 x,x)\rfloor\partial_2\partial_3 \otimes \partial_2\partial_3)
 \otimes \theta_2\theta_3 \otimes \theta_2\theta_3\big)\\
 =&-\lfloor\nabla_{22}W(x,\rho_2 x,x)\cdot\nabla_{33}W(x,\rho_2 x,x)\rfloor \in \Jac(W).
\end{align*}
We have
\begin{align*}
\nabla_{22}W(x,\rho_2 x,x)=&a_1(x_2+\rho_2 x_2)+a_1x_2+a_3x_3^2=(1-(\rho_2)_3)a_1x_2+a_3x_3^2,\\
\nabla_{33}W(x,\rho_2 x,x)=& \big( \frac{1-(\rho_2)_3^5}{1-(\rho_2)_3}+\cdots+\frac{1-(\rho_2)_3}{1-(\rho_2)_3}\big) a_2x_3^4
+\big( \frac{1-(\rho_2)_3^3}{1-(\rho_2)_3}+\cdots+\frac{1-(\rho_2)_3}{1-(\rho_2)_3}\big)a_4(\rho_2 x_2)x_3^2
+a_3(\rho_2 x_2)^2\\
=& \frac{6}{1-(\rho_2)_3}a_2x_3^4+\frac{3}{1-(\rho_2)_3}a_4(\rho_2 x_2)x_3^2+a_3 (\rho_2x_2)^2,
\end{align*}
so
\begin{align*}
\nabla_{22}(x,\rho_2 x,x)\cdot \nabla_{33}(x,\rho_2 x,x) =& \big((1-(\rho_2)_3)a_1x_2+a_3x_3^2\big) \cdot \big(\frac{6}{1-(\rho_2)_3}a_2x_3^4+\frac{3}{1-(\rho_2)_3}a_4(\rho_2 x_2)x_3^2+a_3 (\rho_2x_2)^2\big)\\
=&a_1a_3(1-(\rho_2)_3)(\rho_2)_2^2 x_2^3+ (a_3^2(\rho_2)_2+3a_1a_4)(\rho_2)_2x_2^2x_3^2\\
&+ \big(\frac{3a_3a_4(\rho_2)_2}{1-(\rho_2)_3}+6a_1a_2\big)x_2x_3^4+\frac{6}{1-(\rho_2)_3}a_2a_3x_3^6.
\end{align*}
Since the Jacobian ideal of $W$ is
\[ \partial W=\big(2q^6x_1-qx_2x_3,-qx_1x_3+3a_1x_2^2+2a_3x_2x_3^2+a_4x_3^4,-qx_1x_2+6a_2x_3^5+2a_3x_2^2x_3+4a_4x_2x_3^3\big),\]
 we readily check that $\sigma_{\rho_2,\rho_4}=\lfloor \nabla_{22}(x,\rho_2 x,x)\cdot \nabla_{33}(x,\rho_2 x,x)\rfloor$ is nonzero in $\Jac(W)$.

Likewise, we can also compute 
\begin{align*}
\sigma_{\rho_3,\rho_3}=&\frac{1}{2}\Upsilon \Big( \big(q^6 \partial_1\otimes\partial_1 + \big\lfloor\frac{6}{1-(\rho_3)_3}a_2x_3^4+\frac{3}{1-(\rho_2)_3}a_4(\rho_3 x_2)x_3^2+a_3(\rho_3 x_2)^2\big\rfloor\partial_3\otimes\partial_3\big)^2\otimes \theta_1\theta_3\otimes \theta_1\theta_3\Big) \\
=& -\Upsilon\Big(\big(\big\lfloor q^6 \big(\frac{6}{1-(\rho_3)_3}a_2x_3^4+\frac{3}{1-(\rho_2)_3}a_4(\rho_3 x_2)x_3^2+a_3(\rho_3 x_2)^2\big\rfloor
 \partial_1\partial_3\otimes\partial_1\partial_3\big) \otimes \theta_1\theta_3 \otimes \theta_1\theta_3\Big)\\
 =& \big\lfloor- q^6 \big(\frac{6}{1-(\rho_3)_3}a_2x_3^4+\frac{3}{1-(\rho_2)_3}a_4(\rho_3 x_2)x_3^2+a_3(\rho_3 x_2)^2\big)\big\rfloor
\end{align*}
and it is also straightforward to verify that $\sigma_{\rho_3,\rho_3}$ is nonzero in $\Jac(W)$. 

Summarizing in terms of Floer cohomology, we have the following.
\begin{theorem}
Let $\bar{\bL} \subset \PP^1_{2,3,6}$. Let $(\bL,b)$ be as above. Then we have a decomposition
\[ HF\big((\bL,b),(\bL,b)\big)\otimes R= \bigoplus_{\rho_i \in \Z/6\Z} HF(\bL,\bL)_{\rho_i},\]
where
\begin{align*}
HF(\bL,\bL)_{\rho_0}\cong & \Jac(W)\cdot 1_\bL,\\
HF(\bL,\bL)_{\rho_1}\cong & k\cdot \langle pt_\bL+ aX+bY+cZ\rangle,\\
HF(\bL,\bL)_{\rho_5}\cong & k\cdot \langle pt_\bL+a'X+b'Y+c'Z\rangle,\\
HF(\bL,\bL)_{\rho_2}\cong & k\cdot \langle \bar{X}+d\cdot 1_\bL\rangle,\\
HF(\bL,\bL)_{\rho_4}\cong & k\cdot \langle \bar{X}+d'\cdot 1_\bL\rangle,\\
HF(\bL,\bL)_{\rho_3}\cong & \big(k[x_2]/(x_2^2)\big)\cdot \langle \bar{Y}+l\cdot 1_\bL\rangle,
\end{align*}
and we have nontrivial multiplications between following pairs of components:
\begin{align*}
HF(\bL,\bL)_{\rho_0} \otimes HF(\bL,\bL)_{\rho_i} \to & HF(\bL,\bL)_{\rho_i},\\
 HF(\bL,\bL)_{\rho_1}\otimes HF(\bL,\bL)_{\rho_5} \to & HF(\bL,\bL)_{\rho_0}, \\
 HF(\bL,\bL)_{\rho_2}\otimes HF(\bL,\bL)_{\rho_4} \to & HF(\bL,\bL)_{\rho_0}, \\
 HF(\bL,\bL)_{\rho_3}\otimes HF(\bL,\bL)_{\rho_3} \to & HF(\bL,\bL)_{\rho_0}.
\end{align*}
The multiplication is super-commutative by \eqref{eq:commutativity}.
\end{theorem}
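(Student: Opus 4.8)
The plan is to transport the algebra structure of $\Jac'(W,\Z/6\Z)$ to $HF\big((\bL,b),(\bL,b)\big)\otimes R$ along the isomorphism of Theorem \ref{thm:floerjac} and then read off every assertion from the explicit description of $\Jac'(W,\Z/6\Z)$ assembled above. First I would fix the sector decomposition: Theorem \ref{thm:floerjac} gives an algebra isomorphism $HF\big((\bL,b),(\bL,b)\big)\otimes R\cong\Jac'(W,\Z/6\Z)$, and the underlying module of the target has already been computed to be $\Jac(W)\oplus k\xi_{\rho_1}\oplus k\xi_{\rho_2}\oplus\big(k[x_2]/(x_2^2)\big)\xi_{\rho_3}\oplus k\xi_{\rho_4}\oplus k\xi_{\rho_5}$. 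Pulling this back along the $\AI$-quasi-isomorphism $\Phi$ of \eqref{eq:Phi} produces the decomposition $HF\big((\bL,b),(\bL,b)\big)\otimes R=\bigoplus_{\rho_i}HF(\bL,\bL)_{\rho_i}$, with $HF(\bL,\bL)_{\rho_i}$ the image of the $\rho_i$-sector $\Jac(W^{\rho_i})\cdot\xi_{\rho_i}$; this already yields $HF(\bL,\bL)_{\rho_0}\cong\Jac(W)\cdot 1_\bL$ and the correct $k$-ranks $\dim_k\Jac(W^{\rho_i})$ for $i\ge 1$.

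Next I would name the Floer representative of each $\xi_{\rho_i}$. Here I would invoke the correspondence recorded just before the theorem, obtained by combining Proposition 4.8 and Proposition 9.1 of \cite{CL}: the class $\xi_{\rho_2}$ is $\Phi\big(m_2(X_2,X_3)\big)$ whose leading cochain is $\Phi(\bar{X_1})=\bar X$, and $\xi_{\rho_1}$ is $\Phi\big(m_2(X_1,m_2(X_2,X_3))\big)$ with leading cochain $pt_\bL$, while $\xi_{\rho_5}$, $\xi_{\rho_4}$, $\xi_{\rho_3}$ have leading cochains $pt_\bL$, $\bar X$, $\bar Y$ respectively. Writing each generator as its leading cochain plus a sum of strictly lower-degree terms then gives the displayed expressions $pt_\bL+aX+bY+cZ$, $\bar X+d\cdot 1_\bL$, $\bar Y+l\cdot 1_\bL$, and so on, with the scalars determined by the disc counts.

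Then I would pin down the multiplication table. By the support condition on $d_{g,h}$ in Definition \ref{def:ojr} together with Theorem \ref{thm:main}, the product $\xi_g\bullet\xi_h$ vanishes unless $d_{g,h}\in\Z$, and for $\Z/6\Z$ the only such pairs are $(\rho_0,\rho_i)$, $(\rho_1,\rho_5)$, $(\rho_5,\rho_1)$, $(\rho_2,\rho_4)$, $(\rho_4,\rho_2)$ and $(\rho_3,\rho_3)$; hence every cross-sector product not on the list is zero. For $(\rho_0,\rho_i)$ one has $\xi_{\rho_0}\bullet\xi_{\rho_i}=\xi_{\rho_i}$ since $\xi_{\rho_0}=1$. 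For the three remaining pairs I would use the structure constants $\sigma_{\rho_1,\rho_5}$, $\sigma_{\rho_2,\rho_4}$ and $\sigma_{\rho_3,\rho_3}$ computed above from \eqref{eq:twjacprod}: each lies in $\Jac(W)$, and the explicit representatives — $\nabla_{22}W(x,\rho_2 x,x)\cdot\nabla_{33}W(x,\rho_2 x,x)$ for $\sigma_{\rho_2,\rho_4}$ and $-q^6\big(\tfrac{6}{1-(\rho_3)_3}a_2x_3^4+\tfrac{3}{1-(\rho_2)_3}a_4(\rho_3 x_2)x_3^2+a_3(\rho_3 x_2)^2\big)$ for $\sigma_{\rho_3,\rho_3}$, together with the analogous one for $\sigma_{\rho_1,\rho_5}$ — are seen to be nonzero modulo the Jacobian ideal $\partial W$, so these products are genuinely nontrivial and land in $HF(\bL,\bL)_{\rho_0}$. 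Super-commutativity is then automatic: $\Jac'(W,\Z/6\Z)$ is braided super-commutative by Theorem \ref{thm:shkcomm}, which restricts on the relevant sectors to the relations \eqref{eq:commutativity}.

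The step I expect to be the real obstacle is the second one — matching the algebraic generators $\xi_{\rho_i}$ with concrete Floer cocycles and identifying their leading cochains. This is not formal: it rests on the geometry of the Seidel Lagrangian and on the disc-counting results of \cite{CHL1,CHKL,CL}, in particular on tracking how the immersed generators $X_1,X_2,X_3$ and their $\Z/6\Z$-orbit sums transform under $m_2^b$ and under the identification $\Phi$. The nonvanishing of each $\sigma$ in $\Jac(W)$ is a finite check, but it depends on the precise (here suppressed) coefficients $a_i$; since $W$ has an isolated singularity, $\Jac(W)$ is finite-dimensional over $k$ and a short monomial-basis computation with $\partial W$ settles it.
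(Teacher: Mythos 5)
Your proposal is correct and follows essentially the same route as the paper: transport the structure of $\Jac'(W,\Z/6\Z)$ along Theorem \ref{thm:floerjac}, identify Floer representatives via $\Phi$ and the cited propositions of \cite{CL}, restrict the product table by integrality of $d_{g,h}$, and check nonvanishing of the structure constants modulo $\partial W$. The only divergence is minor: for $\xi_{\rho_1}\bullet\xi_{\rho_5}$ you propose a direct computation of $\sigma_{\rho_1,\rho_5}$ from \eqref{eq:twjacprod} (a cube, since $d_{\rho_1,\rho_5}=3$), whereas the paper avoids this by noting that $\xi_{\rho_1},\xi_{\rho_5}$ are $\Z/6\Z$-invariant and correspond to degree-one classes of $T^2$ under the Kodaira--Spencer isomorphism, so their product is automatically nontrivial and skew-symmetric.
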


%
%
%

\bibliographystyle{amsalpha}

\begin{thebibliography}{}
\bibitem{Dbrane} P. S. Aspinwall, T. Bridgeland, A. Craw, M. R. Douglas, M. Gross, A. Kapustin, G. W. Moore, G. Segal, B. Szendr\"oi and P. M. H. Wilson, {\em Dirichlet branes and mirror symmetry}, Clay Mathematics Institute Monograph, vol. 4 (2009), American Mathematical Society, Providence, RI.
\bibitem{BFK} M. Ballard, D. Favero and L. Katzarkov, {\em A category of kernels for matrix factorizations and its implications for Hodge theory}, Publ. Math. Inst. Hautes \'Etudes Sci. 120 (2014), 1-111.
\bibitem{BTW} A. Basalaev, A. Takahashi and E. Werner, {\em Orbifold Jacobian algebras for invertible polynomials}, arXiv:1608.08962.
\bibitem{CHL1} C.-H. Cho, H. Hong and S.-C. Lau, {\em Localized mirror functor for Lagrangian immersions, and homological mirror symmetry for $\mathbb{P}^1_{a,b,c}$}, J. Differential Geom. 106 (2017), no. 1, 45-126.
\bibitem{CHL2}  C.-H. Cho, H. Hong and S.-C. Lau, {\em Localized mirror functor constructed from a Lagrangian torus},  J. Geom. Phys. 136 (2019), 284-320.
\bibitem{CHKL}  C.-H. Cho, H. Hong, S.-H. Kim and S.-C. Lau, {\em Lagrangian Floer potential of orbifold spheres}, Adv. Math. 306 (2017), 344-426. 
\bibitem{CL} C.-H. Cho and S. Lee, {\em Kodaira-Spencer map, Lagrangian Floer theory and orbifold Jacobian algebras}, arXiv:2007.11732.
\bibitem{Dyc} T. Dyckerhoff, {\em Compact generators in categories of matrix factorizations}, Duke Math. J. 159 (2011), no. 2, 223-274.
\bibitem{FG} B. Fantechi and L. G\"ottsche, {\em Orbifold cohomology for global quotients}, Duke Math. J. 117 (2003), no. 2, 192-227.
\bibitem{HLL} W. He, S. Li and Y. Li, {\em G-twisted braces and orbifold Landau-Ginzburg models}, Comm. Math. Phys. 373 (2020), 175-217.
\bibitem{Kau} R. Kaufmann, {\em Orbifolding Frobenius algebras}, Int. J. Math. 14 (2003), no. 6, pp. 573-617.
\bibitem{Ki} T. Kimura, {\em Orbifold cohomology reloaded}, Toric topology, 231-240,
Contemp. Math., 460 (2008), Amer. Math. Soc., Providence, RI.
\bibitem{PP} A. Polishchuk and L. Positselski, {\em Hochschild (co)homology of the second kind I}, Trans. Amer. Math. Soc. 364 (2012), no. 10, 5311-5368.
\bibitem{PV} A. Polishchuk and A. Vaintrob, {\em Chern characters and Hirzebruch-Riemann-Roch formula for matrix factorizations}, Duke Math. J. 161 (2012), no. 10, 1863-1926.
\bibitem{Sei} P. Seidel, {\em Homological mirror symmetry for the genus two curve},  J. Algebraic Geom. 20 (2011), no. 4, 727-769.
\bibitem{Shk} D. Shklyarov, {\em On Hochschild invariants of Landau-Ginzburg orbifolds}, Adv. Theor. Math. Phys., 24(1) (2020), 189-258.
\bibitem{To} B. To\"en,  {\em The homotopy theory of dg-categories and derived Morita theory,} Invent. Math.
167 (2007), 615-667.
\bibitem{Tu} V. Turaev, {\em Homotopy field theory in dimension 2 and group algebras}, arxiv:math/9910010.
\end{thebibliography}

\end{document}